\numberwithin{equation}{section}
\newtheorem{lma}{Lemma}[section]
\newaliascnt{thmCt}{lma}
\newtheorem{thm}[thmCt]{Theorem}
\newaliascnt{corCt}{lma}
\newtheorem{cor}[corCt]{Corollary}
\newaliascnt{prpCt}{lma}
\newtheorem{prp}[prpCt]{Proposition}
\newtheorem*{thm*}{Theorem}
\newtheorem*{cor*}{Corollary}
\newtheorem*{prp*}{Proposition}
\theoremstyle{definition}
\newaliascnt{pgrCt}{lma}
\newtheorem{pgr}[pgrCt]{}
\newaliascnt{dfnCt}{lma}
\newtheorem{dfn}[dfnCt]{Definition}
\newaliascnt{rmkCt}{lma}
\newtheorem{rmk}[rmkCt]{Remark}
\newaliascnt{qstCt}{lma}
\newaliascnt{exaCt}{lma}
\newtheorem{exa}[exaCt]{Example}
\newaliascnt{ntnCt}{lma}
\newtheorem{ntn}[ntnCt]{Notation}
\newcommand{\NN}{\mathbb{N}}
\newcommand{\NNbar}{\overline{\mathbb{N}}}
\newcommand{\ca}{$C^*$-al\-ge\-bra}
\newcommand{\axiomO}[1]{(O#1)}
\newcommand{\CuSgp}{$\CatCu$-sem\-i\-group}
\newcommand{\CuMor}{$\CatCu$-mor\-phism}
\DeclareMathOperator{\Cu}{Cu}
\DeclareMathOperator{\Lsc}{Lsc}
\DeclareMathOperator{\mesh}{mesh}
\DeclareMathOperator{\diam}{diam}
\newcounter{theoremintro}
\newtheorem{thmIntro}[theoremintro]{Theorem}
\newcommand{\CatTop}{\mathrm{Top}}
\newcommand{\CatCu}{\ensuremath{\mathrm{Cu}}}
\newcommand{\andSep}{\,\,\,\text{ and }\,\,\,}
\DeclareMathOperator{\Int}{Int}
\newcommand{\Msection}[2]{\section{\texorpdfstring{#1}{#2}}}
\title{The Cuntz semigroup of unital commutative AI-algebras}
\author{Eduard Vilalta}
\address{E.~Vilalta, Departament de Matem\`{a}tiques,
Universitat Aut\`{o}noma de Barcelona,
08193 Bellaterra, Barcelona, Spain}
\email{evilalta@mat.uab.cat}
\thanks{
The author was partially supported by MINECO (grant No.\ PRE2018-083419 and No.\ MTM2017-83487-P), and by the Comissionat per Universitats i Recerca de la Generalitat de Catalunya (grant No.\ 2017SGR01725).
}
\subjclass[2010]%
{Primary
46L05, 
46L85. 
}
\keywords{$C^*$-algebras, Cuntz semigroups, AI-algebras}
\date{\today}
\begin{document}

\begin{abstract}
We provide an abstract characterization for the Cuntz semigroup of unital commutative AI-algebras, as well as a characterization for abstract Cuntz semigroups of the form $\Lsc (X,\overline{\NN})$ for some $T_1$-space $X$. In our investigations, we also uncover new properties that the Cuntz semigroup of all AI-algebras satisfies.
\end{abstract}

\maketitle


\section{Introduction}

The celebrated Effros-Handelman-Shen theorem  \cite[Theorem~2.2]{EffHanShe80} characterizes when a countable ordered abelian group $G$ is order isomorphic to the ordered $K_0$-group of an AF-algebra. More explicitly, it states that $G$ is unperforated and has the Riesz interpolation property if and only if $G$ is order isomorphic to the $K_0$-group of such a \ca{}.

In analogy to the definition of an AF-algebra, a \ca{} $A$ is said to be an AI-algebra if $A$ is $*$-isomorphic to an inductive limit whose building blocks have the form $C[0,1]\otimes F_n$ with $F_n$ finite dimensional for every $n$. In the unital commutative setting, an AI-algebra is of the form $C(X)$, with $X$ homeomorphic to an inverse limit of (possibly increasing) finite  disjoint unions of unit intervals.

In this paper we use the Cuntz semigroup of a \ca{}, a refinement of $K_0$ introduced by Cuntz in \cite{Cun78DimFct} and used succesfully in the classification of not necessarily simple \ca{s}; see, for example, \cite{CiupElli08}, \cite{CiupElliSant11}, \cite{RobSan10}, \cite{Rob12LimitsNCCW}. We provide an abstract characterization for the Cuntz semigroup of unital commutative AI-algebras and introduce new properties that the Cuntz semigroup of every AI-algebra satisfies. This settles the range problem for the Cuntz semigroup of this class of commutative \ca{s};  the corresponding problem in the setting of general AI-algebras was posed during the 2018 mini-workshop on the Cuntz semigroup in Houston, and is studied in \cite{Vil21arX:LocalChar}.

Abstracting some of the properties that the Cuntz semigroup of a \ca{} always satisfies, the subcategory $\Cu$ of partially ordered monoids was introduced in \cite{CowEllIva08CuInv}. This subcategory, whose objects are often called \CuSgp{s}, contains the Cuntz semigroup of all \ca{s}, and has been studied extensively in
\cite{AraPerTom11Cu},
\cite{AntoPereThie18},
\cite{AntPerThi20:AbsBivariantCu} and \cite{AntPerRobThi21:CuntzSR1} among others. A relevant family in $\Cu$ is that of \emph{countably based} \CuSgp{s} satisfying \emph{\axiomO{5}} (see \autoref{pgr:AdditionalAxioms}), which contains the Cuntz semigroups of all separable \ca{s}; see \cite{RorWin10ZRevisited} and \cite{AntPerSan11PullbacksCu}.

The range problem for this class of algebras consists of finding a list of properties that a \CuSgp{} $S$ satisfies if and only if $S$ is $\Cu$-isomorphic to the Cuntz semigroup of a unital commutative AI-algebra. In order to do this, we first  investigate when a compact metric space $X$ is such that $C(X)$ is an AI-algebra, that is to say, we analyze when $X$ is homemorphic to an inverse limit of finite  disjoint unions of unit intervals. To this end, and in analogy to the definition of a  \emph{chainable} continuum (see, for example, \cite[Chapter~12]{Nadl92}), we introduce \emph{almost chainable} and \emph{generalized arc-like} spaces; see \autoref{dfn:AlmCha} and \autoref{dfn:GenArcLik} respectively. We prove:

\begin{thmIntro}\label{thmIntro:AchIffCom}[\ref{achain_iff_comAI}]
  Let $X$ be a compact metric space. The following are equivalent:
 \begin{enumerate}[(i)]
  \item $X$ is almost chainable.
  \item $X$ is a generalized arc-like space.
  \item $X$ is an inverse limit of finite disjoint copies of unit intervals.
  \item $C(X)$ is an AI-algebra.
 \end{enumerate}
\end{thmIntro}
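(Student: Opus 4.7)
The plan is to treat the four conditions in two blocks: (iii)$\iff$(iv) is essentially commutative Gelfand duality, while (i)$\iff$(ii)$\iff$(iii) is a topological core that generalizes the classical chainable/arc-like equivalence (Bing, Mioduszewski) from continua to spaces with finitely many path components.

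For (iii)$\iff$(iv), I would apply the Gelfand functor $X\mapsto C(X)$, which is a contravariant equivalence between compact Hausdorff spaces and unital commutative \ca{s} and which takes inverse limits to inductive limits. Since $C\bigl(\bigsqcup_{i=1}^{n}[0,1]\bigr)\cong C[0,1]\otimes\mathbb{C}^{n}$ is a commutative AI-building block, the direction (iii)$\Rightarrow$(iv) is immediate. For the converse, given a presentation $C(X)=\varinjlim C[0,1]\otimes F_n$, the subtlety is that the $F_n$ need not be abelian; to produce an inverse system of finite disjoint unions of intervals one has to replace each structure map by its factorization through the maximal abelian quotient $C[0,1]\otimes Z(F_n)\cong C(Y_n)$, where $Y_n$ is a closed subset of a finite disjoint union of intervals, and then show by an approximation/reindexing argument that one may take $Y_n$ to be an actual finite disjoint union of intervals.

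For the topological equivalences, $X$ almost chainable should unwind to the existence, for each $\varepsilon>0$, of a finite open cover of $X$ of mesh less than $\varepsilon$ whose nerve is a disjoint union of paths. From such a cover I would build a continuous surjection $f_\varepsilon\colon X\to \bigsqcup_{i=1}^{n}[0,1]$ with fibres of diameter at most a uniform multiple of $\varepsilon$ (partition-of-unity argument along each chain, glued to the interval endpoints). Choosing $\varepsilon_n\to 0$ and iteratively refining each chain cover so that every link of the $(n{+}1)$-st cover is contained in some link of the $n$-th, the maps $f_{\varepsilon_n}$ become compatible and yield an inverse system whose limit is canonically homeomorphic to $X$; this gives (i)$\Rightarrow$(iii), and passes through (ii) since a generalized arc-like space is defined as an inverse limit of this type. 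The reverse direction (iii)$\Rightarrow$(i) is easier: pull back along the inverse-limit projections the standard chain covers of $\bigsqcup[0,1]$ of small mesh, and use uniform continuity to ensure the mesh in $X$ is eventually below any prescribed $\varepsilon$.

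The main obstacle is the compatibility step in the chainable-to-inverse-limit direction: producing the successive chain covers so that the induced bonding maps between the disjoint unions of intervals are continuous and satisfy the cofinality needed to recover $X$ as the inverse limit. This is the standard Mioduszewski-type bookkeeping, but with the extra wrinkle that the number of components is allowed to grow, so one must track how components may split or merge between refinements without destroying the chain structure. A secondary difficulty, specific to (iv)$\Rightarrow$(iii), is extracting honest commutative presentations from an arbitrary AI presentation of a commutative algebra; this is where the passage through centers, together with a spectral approximation ensuring the resulting closed subsets of intervals can be enlarged to full finite disjoint unions of intervals without changing the limit, must be performed carefully.
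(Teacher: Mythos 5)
Your architecture matches the paper's (Gelfand/Thomsen for (iii)$\iff$(iv), chain covers and $\epsilon$-maps \`a la Nadler for the topological core), but there are concrete gaps. First, you have guessed the wrong meaning of (ii): in \autoref{dfn:GenArcLik} a generalized arc-like space is a compactum admitting, for every $\epsilon>0$, an $(\epsilon,\delta)$-map onto a finite disjoint union of unit intervals; it is \emph{not} defined as an inverse limit. So your remark that (i)$\Rightarrow$(iii) ``passes through (ii)'' by definition collapses two distinct conditions, and the actual equivalence (i)$\iff$(ii) (the paper's \autoref{eq_AC}) is never addressed. Second, and more seriously, the step you yourself flag as ``the main obstacle''---producing compatible bonding maps between the successive finite disjoint unions of intervals---is precisely the mathematical content of (i)$\Rightarrow$(iii), and your proposal does not supply it. The paper resolves it with two ingredients you would need to reproduce: a Freudenthal-type retraction (\autoref{lma:Freu} and \autoref{pgr:epsilon_Map_reduc}) converting an $(\epsilon,\delta)$-map into an \emph{onto} $\epsilon$-map onto a finite disjoint union of closed intervals (the image of your partition-of-unity map need not be a full disjoint union of intervals, so surjectivity has to be manufactured), and a generalized approximate-factorization lemma (the analogue of \cite[Lemma~12.17]{Nadl92}): given an onto map $g_{1}\colon X\to Y_{1}$ and $\eta>0$, there is $\epsilon>0$ such that every onto $\epsilon$-map $g_{2}\colon X\to Y_{2}$ admits $\varphi\colon Y_{2}\to Y_{1}$ with $\vert g_{1}-\varphi g_{2}\vert<\eta$. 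Only with this lemma, fed into Nadler's inverse-limit criterion (\autoref{prp:Nad}), does your assertion that the maps ``become compatible'' become a proof; the extra bookkeeping for splitting and merging components, which you correctly identify as the new wrinkle, lives entirely inside that lemma.

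For (iv)$\Rightarrow$(iii) the paper simply invokes Thomsen's theorem (\autoref{pgr:Thom}) that every unital commutative AI-algebra has a presentation $\varinjlim C([0,1])^{n_{i}}$ with maps in standard form. Your plan to factor through ``the maximal abelian quotient $C[0,1]\otimes Z(F_{n})$'' conflates the center with the abelianization: for a matrix block $M_{k}$ with $k\geq 2$ there is no $*$-homomorphism onto its center, and the maximal abelian quotient of $C[0,1]\otimes M_{k}$ is $0$. The functorial object you want is the abelianization $(C[0,1]\otimes F_{n})_{\mathrm{ab}}\cong C([0,1])^{s_{n}}$, with $s_{n}$ the number of $1\times 1$ blocks of $F_{n}$; since abelianization preserves inductive limits and $C(X)$ is its own abelianization, this already lands on honest finite disjoint unions of intervals and no passage through closed subsets $Y_{n}$ is needed. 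Either repair the argument along these lines or cite \cite{Thom92} as the paper does.
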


The dimension of the spaces appearing in Theorem \ref{thmIntro:AchIffCom} is at most one, and in this case the Cuntz semigroup of $C(X)$ is isomorphic to the semigroup $\Lsc (X,\overline{\NN})$ of lower semicontinuous functions from $X$ to $\{ 0,1,\cdots ,\infty \}$ (see, e.g. \cite{Rob13CuSpDim2}). Thus, the next step in our approach is to characterize those \CuSgp{s} of the form $\Lsc (X,\overline{\NN})$ for some $T_1$-space $X$. In \autoref{Lsc_like_Cu_sem} we define the notion of an $\Lsc$-like \CuSgp{}. Given such a semigroup $S$, we prove in \autoref{Topological_Lsc_like} that $S$ has an associated $T_1$-topological space $X_S$ (\autoref{pgr:TopSpaCuSgp}) and that many properties defined for \CuSgp{s} have a topological counterpart whenever the semigroup is $\Lsc$-like (see \autoref{prp:TopProp}). For example, an $\Lsc$-like \CuSgp{} $S$ has a \emph{compact} order unit (in the sense of \autoref{pgr:AdditionalAxioms} below) if and only if $X_S$ is countably compact. In \autoref{LscXS_iff_Lsc_like}, we show that $S$ is $\Lsc$-like if and only if it is $\Cu$-isomorphic to $\Lsc (X_S,\NNbar )$.

Using this characterization, together with the notion of covering dimension for \CuSgp{s} introduced in \cite{ThiVil21arX:DimCu}, we obtain the following result.
\begin{thmIntro}[\ref{thm:dimChar}]
 Let $S$ be a \CuSgp{} satisfying \axiomO{5} and let $n\in\NN\cup\{ \infty\}$. Then, $S$ is $\Cu$-isomorphic to $\Lsc (X,\NNbar )$ with $X$ a compact metric space such that $\dim (X)= n$ if and only if $S$ is $\Lsc$-like, countably based, satisfies \axiomO{5}, has a compact order unit, and $\dim (S)=n$.
 
 In particular, a \CuSgp{} $S$ is $\Cu$-isomorphic to the Cuntz semigroup of the \ca{} $C(X)$ with $X$ compact metric and $\dim (X)\leq 1$ if and only if $S$ is $\Lsc$-like, countably based, satisfies \axiomO{5}, has a compact order unit, and $\dim (S)\leq 1$.
\end{thmIntro}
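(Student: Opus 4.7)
The plan is to prove the main equivalence first, treating $X$ as the topological space produced by \autoref{LscXS_iff_Lsc_like}, and then to deduce the ``in particular'' clause by invoking the identification $\Cu(C(X))\cong \Lsc(X,\NNbar)$ valid when $\dim(X)\leq 1$; see \cite{Rob13CuSpDim2}.

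For the forward direction, suppose $S\cong \Lsc(X,\NNbar)$ with $X$ a compact metric space and $\dim(X)=n$. Being $\Lsc$-like is preserved under $\Cu$-isomorphism and holds for $\Lsc(X,\NNbar)$ by definition. Countable basedness and \axiomO{5} follow from $X$ being second countable. The constant function $1_X$ is a compact order unit since $X$ is compact, which translates through the correspondence in \autoref{prp:TopProp}. Finally, $\dim(S)=n$ follows from the equality between the covering dimension of $X$ and the $\Cu$-dimension of $\Lsc(X,\NNbar)$, as developed in \cite{ThiVil21arX:DimCu}.

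For the backward direction, assume $S$ satisfies the listed properties. By \autoref{LscXS_iff_Lsc_like} there is a $T_1$-space $X_S$ with $S\cong \Lsc(X_S,\NNbar)$. By \autoref{prp:TopProp}, countable basedness forces $X_S$ to be second countable while the compact order unit forces $X_S$ to be countably compact; Lindel\"ofness then upgrades countable compactness to full compactness. The extra structure encoded in being $\Lsc$-like should supply Hausdorffness and regularity, so Urysohn's metrization theorem yields a compact metrizable topology on $X_S$. The equality $\dim(X_S)=n$ uses the same dimension comparison from \cite{ThiVil21arX:DimCu} as in the forward direction.

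The main obstacle will be confirming that $X_S$ inherits enough separation to be metrizable, rather than merely $T_1$ and second countable. This should follow from the $\Lsc$-like axioms through \autoref{prp:TopProp}, but requires careful tracking of which separation properties are enforced. Once the main equivalence is in hand, the ``in particular'' statement is routine: for compact metric $X$ with $\dim(X)\leq 1$, Robert's result gives $\Cu(C(X))\cong \Lsc(X,\NNbar)$, so each side of the equivalence for $n\leq 1$ rephrases as the corresponding assertion about $\Cu(C(X))$.
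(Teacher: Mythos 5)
Your proposal follows essentially the same route as the paper: realize $S$ as $\Lsc(X_S,\NNbar)$ via \autoref{LscXS_iff_Lsc_like}, use \autoref{prp:TopProp} to see that $X_S$ is compact metric, transfer the dimension via the comparison result in \cite{ThiVil21arX:DimCu}, and invoke \cite[Theorem~1.1]{Rob13CuSpDim2} for the ``in particular'' clause. The one point you leave open --- where the separation needed for metrizability of $X_S$ comes from --- deserves a correction in attribution: it does \emph{not} follow from the $\Lsc$-like axioms alone, which only guarantee a $T_1$ topology. The paper obtains normality of $X_S$ from \autoref{Hausdorff}, whose hypotheses are precisely that the least order unit $e$ is compact and that $S$ satisfies \axiomO{5}; this is packaged as item (v) of \autoref{prp:TopProp}, which is the statement you should cite for the backward direction. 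Two further small points: \axiomO{5} is a blanket hypothesis of the theorem, so it need not (and does not) ``follow from $X$ being second countable'' in the forward direction; and in the ``in particular'' part the paper does verify \axiomO{5} for $\Cu(C(X))$, but from the fact that it is the Cuntz semigroup of a \ca{}, not from any topological property of $X$.
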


With these results at hand, we introduce in \autoref{Sct_proper} notions of \emph{chainability} for \CuSgp{s}. These conditions aim at modelling, at the level of abstract Cuntz semigroups, the concepts of cover and chain for a topological space. We prove:

\begin{thmIntro}[{\ref{almost_chainable_Cu}}]
 Let $S$ be a \CuSgp{}. Then, $S$ is $\Cu$-isomorphic to the Cuntz semigroup of a unital commutative AI-algebra if and only if $S$ is countably based, $\Lsc$-like,  weakly chainable, has a compact order unit, and satisfies \axiomO{5}.
\end{thmIntro}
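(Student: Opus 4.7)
The strategy is to combine the topological characterization of \autoref{thmIntro:AchIffCom} with the abstract characterization of $\Cu$-semigroups of the form $\Lsc(X,\NNbar)$ from \autoref{thm:dimChar}, using that the new notion of weak chainability for $\Cu$-semigroups corresponds, via the duality $S\leftrightarrow X_S$, to almost chainability of the associated space.

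For necessity, suppose $S\cong\Cu(A)$ with $A=C(X)$ a unital commutative AI-algebra. Since $A$ is separable and unital, $X$ is compact metric; by \autoref{thmIntro:AchIffCom}, $X$ is almost chainable, and in particular $\dim X\leq 1$. The standard identification $\Cu(C(X))\cong\Lsc(X,\NNbar)$ valid for compact metric $X$ with $\dim X\leq 1$ then makes $S$ fit the hypotheses of \autoref{thm:dimChar}, which immediately yields that $S$ is $\Lsc$-like, countably based, has a compact order unit, and satisfies \axiomO{5}. The remaining property, weak chainability of $S$, is obtained by transporting almost chainability of $X$ through the isomorphism $S\cong\Lsc(X,\NNbar)$: the finite chains of open sets witnessing almost chainability of $X$ are converted into the finite families of compactly contained elements required by the abstract notion.

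For sufficiency, assume $S$ enjoys all the listed properties. By \autoref{LscXS_iff_Lsc_like}, there is a $T_1$-space $X_S$ with $S\cong\Lsc(X_S,\NNbar)$. Via the dictionary in \autoref{prp:TopProp}, the hypotheses that $S$ is countably based, satisfies \axiomO{5}, and has a compact order unit force $X_S$ to be compact metric. Weak chainability of $S$ is then translated into almost chainability of $X_S$ via the same correspondence, so \autoref{thmIntro:AchIffCom} ensures that $C(X_S)$ is a unital commutative AI-algebra. The chain of isomorphisms $\Cu(C(X_S))\cong\Lsc(X_S,\NNbar)\cong S$ completes the argument.

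The main obstacle is establishing the dictionary between weak chainability of an $\Lsc$-like $\Cu$-semigroup $S$ and almost chainability of the associated compact metric space $X_S$. Because the abstract condition is formulated in terms of finite refinements of generating sets by way-below relations in $S$, one must carefully check that such refinements encode exactly the finite chains of small-diameter connected open sets covering $X_S$, and conversely that an $\varepsilon$-chain cover of $X_S$ can be promoted to a suitable family of characteristic functions in $\Lsc(X_S,\NNbar)$. Once this equivalence is in place, the theorem assembles from \autoref{thmIntro:AchIffCom}, \autoref{thm:dimChar}, and \autoref{LscXS_iff_Lsc_like} without further difficulty.
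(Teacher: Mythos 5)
Your overall architecture is exactly the paper's: the theorem is assembled from \autoref{achain_iff_comAI}, \autoref{LscXS_iff_Lsc_like}, \autoref{prp:TopProp}\,(v) and the identification $\Cu (C(X))\cong\Lsc (X,\NNbar )$ for compact metric $X$ of dimension at most one, with the only genuinely new ingredient being the equivalence, for compact metric $X$, between weak chainability of $\Lsc (X,\NNbar )$ and almost chainability of $X$. The problem is that you have only \emph{named} this equivalence as ``the main obstacle'' and gestured at what would need to be checked; you have not proved it, and it is the entire content of the theorem beyond what \autoref{char_piecewise} and \autoref{thm:dimChar} already provide. The paper devotes a separate lemma to it (the lemma immediately following \autoref{wch}), and its own proof of \autoref{almost_chainable_Cu} is then a one-line reduction to the argument of \autoref{char_piecewise}. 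So the skeleton matches, but the load-bearing step is missing.

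That dictionary lemma is not a routine translation, for two reasons your sketch glosses over. First, \autoref{wch} is not simply a statement about covers of the unit: it quantifies over all $x\ll y\ll y_1+\cdots +y_n$ and demands an auxiliary element $x'\leq y$ with $x\propto x'$ before imposing the chain conditions relative to $x'$. In the direction ``weakly chainable $\Rightarrow$ almost chainable'' one specializes to $x=y=1$ and must argue from $x'\leq 1\leq kx'$ that $x'=1$; in the converse direction one takes $x'=x\wedge 1$ and needs \axiomO{5} (available only after invoking $\dim X\leq 1$ to get $\Lsc (X,\NNbar )\cong \Cu (C(X))$) together with \autoref{Hausdorff} to convert $(y_1\wedge 1)\vee\cdots\vee (y_n\wedge 1)\gg x'$ into the containment $\overline{U_{x'}}\subset U_{y_1}\cup\cdots\cup U_{y_n}$, after which one refines the cover $X\setminus\overline{U_{x'}},U_{y_1},\dots ,U_{y_n}$ by an almost chain and intersects with $U_{x'}$. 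Second, your description of the topological side is slightly off: an almost chain is a finite cover with $C_i\cap C_j=\emptyset$ whenever $|i-j|\geq 2$; its members need not be connected, and the small-diameter requirement is absorbed by \autoref{Eq_metric_topo}, while the elements $z_i$ in \autoref{wch} are not required to be ``compactly contained'' in anything. Until the dictionary lemma is actually established, the proof is incomplete.
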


Finally, in \autoref{Sect_Proper}, we generalize some of the properties used in \autoref{almost_chainable_Cu} and show that these generalizations are satisfied for the Cuntz semigroup of every AI-algebra; see \autoref{thm:AIProperties}.

In \cite{Vil21arX:LocalChar} we continue the study of Cuntz semigroups of AI-algebras and develop tools towards a similar characterization of \autoref{almost_chainable_Cu} in the general case.
\vspace{0.1cm}

\textbf{Acknowledgments.} This paper constitutes a part of the Ph.D. dissertation of the author. He is indebted to his advisor  Francesc Perera for his encouragement and insights. He also wishes to thank Hannes Thiel for his comments and suggestions on a first draft which greatly improved the paper.

\section{Preliminaries}

Given a \ca{} $A$ and two positive elements $a,b\in A$, recall that $a$ is \emph{Cuntz subequivalent} to $b$, and write $a\precsim b$, if there exists a sequence $(r_n)_n$ in $A$ such that $a=\lim_n r_n b r_n^*$. Moreover, if $a\precsim b$ and $b\precsim a$, we say that $a$ and $b$ are \emph{Cuntz equivalent}, in symbols $a\sim b$.

The \emph{Cuntz semigroup} of $A$, denoted by $\Cu (A)$, is defined as the quotient $(A\otimes \mathcal{K})_+/{\sim }$, where we denote by $[a]$ the class of an element $a\in (A\otimes \mathcal{K})_+$. Equipped with the order induced by $\precsim$ and the addition induced by $[a]+[b]=\left[\begin{psmallmatrix}a&0\\ 0&b\end{psmallmatrix}\right]$, the Cuntz semigroup becomes a positively ordered monoid; see \cite{Cun78DimFct}, \cite{CowEllIva08CuInv}.

\begin{pgr}
 Given two elements $x,y$ in a partially ordered set, we say that $x$ is \emph{way-below} $y$, and write $x\ll y$, if for every increasing sequence $(z_n)_n$ whose supremum exists and is greater than or equal to $y$ there exists $n\in\NN$ such that $x\leq z_n$.
 
 In \cite{CowEllIva08CuInv} it was shown that the Cuntz semigroup $S$ of every \ca{} always satisfies the following properties:
 \begin{itemize}
 \item[\axiomO{1}] Every increasing sequence in $S$ has a supremum.
 \item[\axiomO{2}] Every element in $S$ can be written as the supremum of an $\ll$-increasing sequence.
 \item[\axiomO{3}] For every $x'\ll x$ and $y'\ll y$, we have $x'+y'\ll x+y$.
 \item[\axiomO{4}] For every pair of increasing sequences $(x_n)_n$ and $(y_n)_n$ we have $\sup_n x_n +\sup_n y_n = \sup_n (x_n+y_n)_n$.
\end{itemize}
\end{pgr}

\begin{pgr}
In a more abstract setting, we say that a positively ordered monoid is a \emph{\CuSgp{}} if it  satisfies \axiomO{1}-\axiomO{4}.

We also say that a map between two \CuSgp{s} is a \emph{\CuMor{}} if it is a positively ordered monoid morphism that preserves suprema of increasing sequences and the way-below relation. As proved in \cite{CowEllIva08CuInv}, every *-homomorphism $\varphi$ between \ca{s} induces a \CuMor{} $\Cu (\varphi)$ between their Cuntz semigroups.

Thus, one can consider the subcategory $\Cu$ of the category of positively ordered monoid as the category with \CuSgp{s} and \CuMor{s} as its objects and morphisms respectively. By the results from \cite{CowEllIva08CuInv}, the assignment $\Cu\colon C^*\to \Cu$ is functorial.
 
 Moreover, we know from \cite[Corollary~3.2.9]{AntoPereThie18}  that the category $\Cu$ has arbitrary inductive limits and that the functor $\Cu$ is arbitrarily continuous; see also \cite{CowEllIva08CuInv}.
\end{pgr}

\begin{pgr}\label{pgr:AdditionalAxioms}
A \CuSgp{} is said to have \emph{weak cancellation} if $x\ll y$ whenever $x+z\ll y+z$ for some element $z$. Given a \emph{compact} element $z$, that is, an element such that $z\ll z$, weak cancellation implies that $x\leq y$ whenever $x+z\leq y+z$. Stable rank one \ca{s} have weakly cancellative Cuntz semigroups by \cite[Theorem~4.3]{RorWin10ZRevisited}.

It was also proven in \cite{RorWin10ZRevisited} that the Cuntz semigroup of a \ca{} always satisfies the following property: For every $x'\ll x\leq z$, there exists $c$ such that 
 \[
  x'+c\leq z\leq x+c.
 \]

 Moreover, it was shown in \cite[Proposition~4.6]{AntoPereThie18} that a  stronger property termed \axiomO{5} was also  satisfied for the Cuntz semigroup of any \ca{}. Under the assumption of weak cancellation, these two properties coincide.
 
 We will say that a \CuSgp{} is \emph{countably based} if there exists a countable subset such that every element in the semigroup can be written as the supremum of an increasing sequence of elements in the subset. It follows from  \cite[Lemma~1.3]{AntPerSan11PullbacksCu} that Cuntz semigroups of separable \ca{s} are countably based.
\end{pgr}

\begin{pgr}\label{pgr:Thom}
Recall that a \ca{} $A$ is an AI-algebra if it is *-isomorphic to an inductive limit of the form $\lim_n C[0,1]\otimes F_n$ with $F_n$ finite dimensional for every $n$.

 By \cite{Thom92}, every unital commutative AI-algebra is *-isomorphic to an inductive limit of the form
\begin{equation*}
\xymatrix{
 C([0,1])^{n_{1}} \ar[r]^-{\varphi_{2,1}} & C([0,1])^{n_{2}} \ar[r]^-{\varphi_{3,2}} & 
 C([0,1])^{n_{3}} \ar[r]^-{\varphi_{4,3}} & 
 \cdots
 }
\end{equation*}
with $n_{i}\leq n_{i+1}$ for each $i$ and where all the homomorphisms are in standard form. That is to say, for every $i$ we have
\begin{equation*}
 \pi_{j,i+1}\varphi_{i+1,i}(f_{1},\cdots ,f_{n_{i}}) = f_{r}\sigma_{j,i},
\end{equation*}
where $\pi_{j,i+1}\colon C([0,1])^{n_{i+1}}\rightarrow C([0,1])$ is the $j$-th projection map, $\sigma_{j,i}\colon [0,1]\rightarrow [0,1]$ is a continuous function and $r\leq n_{i}$.

Thus, every unital commutative AI-algebra is isomorphic to $C(X)$, where $X$ is an inverse limit of (possibly increasing) finite disjoint unions of unit intervals. Conversely, it is easy to see that $C(X)$ is an AI-algebra for such an inverse limit $X$.

Since the space $X$ above will always have dimension at most one, we know by \cite[Theorem~1.1]{Rob13CuSpDim2} that $\Cu (C(X))\cong\Lsc (X,\overline {\NN})$.

Also note that, with the above notation, if there exists some $i$ such that $n_{i}=n_{j}$ for every $j\geq i$, we can write $A\cong \oplus_{k=1}^{n_{i}} C(X_{k})$ with $X_{k}$ an inverse limit of unit intervals.
\end{pgr}

\section{Chainable and almost chainable spaces}

In this section we will prove that a compact metric space $X$ is homeomorphic to an inverse limit of finite disjoint copies of unit  intervals (i.e. $C(X)$ is an AI-algebra) if and only if $X$ satisfies an abstract property, which we call almost chainability; see \autoref{achain_iff_comAI}. This is done in analogy to \cite[Chapter 12]{Nadl92}, where it is shown that a continuum is homeomorphic to the inverse limit of unit intervals if and only if it is chainable, as defined below.

\begin{pgr}
Recall that a \emph{compactum} is a compact metric space, and that a \emph{continuum} is a connected compactum. As in \cite[Chapter~12]{Nadl92}, a \emph{chain} in a continuum $X$ will be a finite non-empty indexed collection $C=\{C_{1},\cdots , C_{k}\}$ of open subsets of $X$ such that 
\[
 C_{i}\cap C_{j}\neq\emptyset \text{ if and only if }\vert i-j\vert\leq 1.
\]

The \emph{mesh} of a chain $C=\{C_{1},\cdots , C_{k}\}$, in symbols $\mesh (C)$, is defined as
\[
 \mesh (C)=\max\{ \diam (C_{i})\}.
\]

A chain of mesh less than $\epsilon$ is called an \emph{$\epsilon$-chain}.
\end{pgr}

Following \cite[Chapter~12]{Nadl92}, one can now define chainable continua.

\begin{dfn}\label{dfn_chainable}
 A continuum $X$ is said to be \emph{chainable} if for every positive $\epsilon$ there exists an $\epsilon$-chain covering $X$.
 
 We will say that a compactum is \emph{piecewise chainable} if it can be written as the finite disjoint union of closed chainable subspaces. 
\end{dfn}

Chainability may also be defined in a general setting, and one can check that the previous and following definitions coincide whenever $X$ is a continuum (see \autoref{Eq_metric_topo}).

\begin{dfn}
 A topological space $X$ is said to be \emph{topologically chainable} if any finite open cover of $X$ can be refined by a chain, that is, an open cover $C_{1},\cdots ,C_{k}$ such that $C_{i}\cap C_{j}\neq\emptyset$ if and only if $\vert i-j\vert\leq 1$.
\end{dfn}
\begin{rmk}\label{rmk:ChainImplDim1}
 Note that both of the abovementioned definitions imply that the dimension of the space is at most one, and that the space is connected whenever it is compact.
\end{rmk}

The following proposition shows why we consider chainable continua. More on chainable spaces can be found in \cite[Chapter 12]{Nadl92}.

\begin{prp}
 A compactum is an inverse limit of unit intervals if and only if it is chainable.
\end{prp}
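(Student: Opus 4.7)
The plan is to prove both implications by explicit construction.

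For $(\Rightarrow)$, suppose $X=\varprojlim(X_n,f_n)$ with each $X_n=[0,1]$. Since an inverse limit of continua is a continuum, $X$ is connected, and after replacing $X_n$ by $\pi_n(X)$ I may assume each projection $\pi_n\colon X\to [0,1]$ is a continuous surjection. The sets $\pi_n^{-1}(U)$ with $U\subseteq [0,1]$ open form a basis for the topology of $X$, so compactness gives a finite basic cover all of whose members have diameter less than $\epsilon$; pulling these back to a common stage $n$ and applying the Lebesgue number lemma on $[0,1]$ produces $\delta>0$ and an index $n$ with $\diam(\pi_n^{-1}(V))<\epsilon$ whenever $\diam(V)<\delta$. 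Pulling back any $\delta$-chain of $[0,1]$ (trivial to exhibit) by $\pi_n$ then yields an $\epsilon$-chain of $X$, because surjectivity of $\pi_n$ makes preimages commute with intersections and preserve (non)emptiness.

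For $(\Leftarrow)$, suppose $X$ is chainable. I would recursively construct $\epsilon_n$-chains $\mathcal{C}_n=\{C_{n,1},\ldots,C_{n,k_n}\}$ of $X$ with $\epsilon_n\to 0$ and with $\mathcal{C}_{n+1}$ refining $\mathcal{C}_n$ through a monotone index map $r_n\colon\{1,\ldots,k_{n+1}\}\to\{1,\ldots,k_n\}$. For each chain I would associate the nerve map
\[
 \varphi_n\colon X\to [0,k_n-1],\qquad \varphi_n(x)=\sum_{i=1}^{k_n}(i-1)\,\psi_{n,i}(x),
\]
where $\psi_{n,i}(x)=d(x,X\setminus C_{n,i})/\sum_{j}d(x,X\setminus C_{n,j})$ is the canonical partition of unity subordinate to $\mathcal{C}_n$. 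The monotone refinement lets me define piecewise-linear bonding maps $f_n$ with $\varphi_n=f_n\circ\varphi_{n+1}$, so that $\Phi=(\varphi_n)_n\colon X\to\varprojlim([0,1],f_n)$ is continuous. Injectivity is the crux: for $x\neq y$ and $n$ so large that $\mesh(\mathcal{C}_n)<d(x,y)$, no single $C_{n,i}$ can contain both points, hence the index sets $I_x=\{i:\psi_{n,i}(x)>0\}$ and $I_y$ are disjoint; since the chain structure forces each $I_x$ to be an interval of consecutive integers, the values $\varphi_n(x)$ and $\varphi_n(y)$ lie in disjoint subintervals of $[0,k_n-1]$ and so differ. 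Surjectivity and the homeomorphism conclusion then follow from compactness of $X$.

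The step I expect to be the main obstacle is the simultaneous recursive choice of chains, monotone refinements, and partitions of unity so that $\varphi_n=f_n\circ\varphi_{n+1}$ holds exactly rather than approximately. For a generic partition of unity subordinate to $\mathcal{C}_{n+1}$ the composite $f_n\circ\varphi_{n+1}$ need not equal $\varphi_n$, and the standard remedies are either to redefine $\varphi_n$ recursively as $f_n\circ\varphi_{n+1}$ and verify that the mesh decay of the underlying chains still forces point separation, or to tune the partitions of unity at stage $n+1$ so that their aggregates along the fibres of $r_n$ recover those at stage $n$. Once this coherence is achieved, the remainder reduces to a standard compactness-and-separation verification.
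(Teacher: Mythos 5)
Your forward implication is sound and essentially reconstructs the argument that the paper outsources to Nadler: the sets $\pi_n^{-1}(U)$ form a basis, compactness plus the Lebesgue number lemma on $[0,1]$ turn $\pi_n$ into an $(\epsilon,\delta)$-map, and pulling back a $\delta$-chain of the interval through the surjection $\pi_n$ preserves the intersection pattern. (Do say a word about the degenerate case, where $\pi_n(X)$ may be a single point and cannot be rescaled to $[0,1]$; the paper treats degeneracy separately before citing \cite[Theorems 12.11, 12.19]{Nadl92} for everything else.)

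The converse is where the proposal genuinely breaks, in two places. First, you require the refining index map $r_n\colon\{1,\dots,k_{n+1}\}\to\{1,\dots,k_n\}$ to be \emph{monotone}. A monotone pattern makes each bonding map $f_n$ a monotone surjection of $[0,1]$, and an inverse limit of arcs under monotone surjective bonding maps is again an arc; since chainable continua include the pseudo-arc, the $\sin(1/x)$-continuum and the Knaster buckethandle, none of which is an arc, monotone refinements simply do not exist in general. What one can arrange is only the adjacency condition $|r_n(i)-r_n(i+1)|\le 1$ --- precisely the condition $|i(j)-i(j+1)|\le 1$ appearing in the paper's generalization of Nadler's Lemma 12.17 --- and the resulting ``crooked'' patterns are the whole point of the theory. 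Second, the coherence $\varphi_n=f_n\circ\varphi_{n+1}$ that you yourself flag as the main obstacle cannot be achieved exactly, and neither proposed remedy closes the gap: redefining $\varphi_n:=f_n\circ\varphi_{n+1}$ is an infinite regress unless you control the telescoping errors and then re-verify point separation, and partitions of unity cannot be tuned to make the identity hold on the nose. The standard resolution, and the one the paper relies on, is to abandon exact commutativity: one only needs $d(\varphi_n,f_n\circ\varphi_{n+1})\le\delta_n/2$ together with the contraction and separation estimates of \autoref{prp:Nad} (Nadler's Proposition 12.18) to conclude $X\cong\varprojlim\,([0,1],f_n)$. Without importing (or proving) that approximate-inverse-limit criterion, your converse remains a plan rather than a proof.
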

\begin{proof}
 Let $X$ be the inverse limit of unit intervals. Since the unit interval is compact and connected, $X$ is connected and, consequently, a continuum.
 
 If $X$ is degenerate (i.e a point), it is clearly chainable, so we may assume otherwise. 
 
 If $X$ is non-degenerate, then \cite[Theorems 12.11,12.19]{Nadl92} and the comments following Theorem 12.19 of \cite{Nadl92} imply that $X$ is chainable.
 \vspace{0.1cm}
 
 Conversely, if $X$ is chainable, it can either be degenerate (in which case we are done) or non-degenerate. By \cite[Theorem 12.11]{Nadl92}, a non-degenerate chainable continuum is an inverse limit of unit intervals, so the result follows.
\end{proof}

\begin{dfn}\label{dfn:PieceWiseChain}
 A unital AI-algebra $A$ will be said to be \emph{block-stable} if it is isomorphic to $C(X)$ with $X$ a compact metric piecewise chainable space. That is, if $A\cong \oplus_{k=1}^{n}C(X_{k})$ with $X_{k}$ a chainable continuum for each $k$.
\end{dfn}

\subsection{Generalized arc-like spaces}

We now generalize the previous results and characterize the topological spaces arising from unital commutative AI-algebras in terms of a weaker property:

\begin{pgr}
Let $X$ be a compactum. In analogy with the definition of chains, an \emph{almost chain} in $X$ will be a finite non-empty indexed collection $C=\{ C_{1},\cdots , C_{k} \}$ of open subsets of $X$ such that
\[
 C_{i}\cap C_{j}= \emptyset \text{ whenever } \vert i-j\vert \geq 2. 
\]

The \emph{mesh} of an almost chain will be $\mesh (C)=\max\{\diam (C_{i}) \}$, and an almost chain of mesh less than $\epsilon$ will be called an \emph{$\epsilon$-almost chain}.
\end{pgr}

\begin{dfn}\label{dfn:AlmCha}
 A compact metric space $X$ will be said to be \emph{almost chainable} if, for every $\epsilon>0$, there exists an $\epsilon$-almost chain covering $X$.
\end{dfn}

\begin{dfn}\label{dfn:TopAlmCha}
 A topological space $X$ will be said to be \emph{topologically almost chainable} if any finite open cover of $X$ can be refined by an almost chain, that is, an open refinement $C_{1},\cdots ,C_{k}$ such that $C_{i}\cap C_{j}= \emptyset$ whenever $\vert i-j\vert\geq 2$.
\end{dfn}

\begin{lma}\label{Eq_metric_topo}
  \autoref{dfn:AlmCha} and \autoref{dfn:TopAlmCha} above coincide whenever $X$ is a  compact metric space.
  
  The same proof can be applied to  chainability and topological chainability.
 \end{lma}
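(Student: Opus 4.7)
The plan is to prove each direction using standard compact-metric tools, with the Lebesgue number lemma handling one direction and total boundedness handling the other. Throughout, the key observation is that the combinatorial condition defining an almost chain (namely $C_i\cap C_j=\emptyset$ whenever $|i-j|\geq 2$) is unaffected by passing to refinements or by intersecting with balls, so we only need to reconcile the metric control (mesh) with the combinatorial control (refinement of a prescribed cover).

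For the direction $\Rightarrow$ (metric almost chainable implies topologically almost chainable), let $\mathcal{U}=\{U_1,\ldots,U_m\}$ be a finite open cover of $X$. By the Lebesgue number lemma applied to the compact metric space $X$, there exists $\delta>0$ such that every subset of $X$ of diameter less than $\delta$ is contained in some $U_i$. By \autoref{dfn:AlmCha}, choose a $\delta$-almost chain $\{C_1,\ldots,C_k\}$ covering $X$. Each $C_i$ has diameter less than $\delta$, so it lies in some member of $\mathcal{U}$; hence this almost chain refines $\mathcal{U}$, witnessing \autoref{dfn:TopAlmCha}.

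For the direction $\Leftarrow$, fix $\epsilon>0$. By compactness, $X$ is covered by finitely many open balls $B(x_1,\epsilon/3),\ldots,B(x_n,\epsilon/3)$. Applying \autoref{dfn:TopAlmCha} to this finite open cover produces a refining almost chain $\{C_1,\ldots,C_k\}$. Each $C_i$ is contained in some $B(x_{j(i)},\epsilon/3)$ and therefore satisfies $\diam(C_i)\leq 2\epsilon/3<\epsilon$. Thus $\{C_1,\ldots,C_k\}$ is an $\epsilon$-almost chain covering $X$, which gives \autoref{dfn:AlmCha}. No step here is a real obstacle: the only mild subtlety is choosing the ball radius strictly smaller than $\epsilon/2$ so that the diameter bound is strict, which is why we take $\epsilon/3$.

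For the second assertion, the identical argument applies verbatim with "almost chain" replaced by "chain". Indeed, the combinatorial condition $C_i\cap C_j\neq\emptyset \iff |i-j|\leq 1$ appears only in the refining family itself, and the Lebesgue number lemma (for the forward implication) and the cover-by-small-balls construction (for the reverse) are both indifferent to whether the refining family is a chain or an almost chain.
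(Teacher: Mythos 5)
Your proof is correct and follows essentially the same route as the paper: the Lebesgue number lemma for the direction from metric to topological almost chainability, and a finite cover by small balls plus the refinement property for the converse. Your choice of $\epsilon/3$-balls is in fact slightly more careful than the paper's use of $\epsilon$-balls, since a subset of an $\epsilon$-ball only has diameter at most $2\epsilon$; otherwise the two arguments coincide, including the observation that the argument transfers verbatim to chains.
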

 \begin{proof}
  If $X$ is topologically almost chainable, let $\epsilon >0$ and take an open cover of $\epsilon$-balls in $X$. Since $X$ is compact, there exist finitely many points $x_{1},\cdots ,x_{n}$ such that their $\epsilon$-balls cover $X$.
  
  By topological almost chainability, this finite open cover can be refined by open subsets $C_{1},\cdots ,C_{k}$ such that $C_{i}\cap C_{j}=\emptyset$ whenever $\vert i-j\vert\geq 2$. Since $C_{i}$ is contained in some $\epsilon$-ball, it follows that $C_{1},\cdots ,C_{k}$ is an $\epsilon$-almost chain.
  
  Conversely, assume that $X$ is almost chainable and take a (finite) open cover $U_{1},\cdots ,U_{n}$.
  
  Since $X$ is a compact metric space, the cover has a non-zero Lebesgue number $\delta$. That is, every subset of $X$ having diameter less than $\delta$ is contained in some $U_{j}$.
  
  Set $\epsilon <\delta$ and consider an $\epsilon$-almost chain $C=\{ C_{1},\cdots ,C_{k} \}$ covering $X$. Since $\diam (C_{i})\leq \epsilon <\delta$ for every $i$, it follows that each $C_{i}$ is contained in some $U_{j}$. Consequently, $C_{1},\cdots ,C_{k}$ is an open refinement of $U_{1},\cdots , U_{n}$ with the required property.
 \end{proof}

We now define the notion of $(\epsilon , \delta)$-maps and generalized arc-like spaces. This is done in analogy with \cite[Theorem 12.11]{Nadl92}.

Recall that a continuous map $f\colon X\rightarrow Y$ is an \emph{$\epsilon$-map} if $\diam (f^{-1}( y) )\leq \epsilon$ for every $y\in Y$, where $\diam (\emptyset )=0$ by definition.

\begin{dfn}
 Given $\epsilon,\delta>0$, we will say that a continuous map $f\colon X\rightarrow Y$ is an \emph{$(\epsilon, \delta)$-map} if $\diam (f^{-1}(Z))< \epsilon$ for every $Z\subset Y$ with $\diam (Z)<\delta$.
\end{dfn}

Recall also that a continuum $X$ is \emph{arc-like} if for every $\epsilon >0$ there exists an $\epsilon$-map from $X$ onto $[0,1]$; see \cite[Definition~2.12]{Nadl92}.

\begin{dfn}\label{dfn:GenArcLik}
 A compactum is said to be a \emph{generalized arc-like space} if for every $\epsilon >0$ there exists $\delta >0$ and an $(\epsilon ,\delta)$-map $f\colon X\rightarrow I_{1}\sqcup\cdots\sqcup I_{n}$ with $I_{j}=[0,1]$ for each $j$.
\end{dfn}

\begin{rmk}\label{rmk:OneInt}
 Given a finite disjoint union of unit intervals $I_{1}\sqcup\cdots\sqcup I_{n}$ and some $\delta >0$, one can clearly construct a $(\delta, \delta ')$-map $r\colon I_{1}\sqcup\cdots\sqcup I_{n}\rightarrow [0,1]$  (by simply rescaling $I_{1}\sqcup\cdots\sqcup I_{n}$ until it fits in $[0,1]$).
 
 Thus, a compactum is a generalized arc-like space if and only if for every $\epsilon >0$ there exists an $(\epsilon ,\delta)$-map $f\colon X\rightarrow [0,1]$ for some $\delta >0$.
\end{rmk}

\begin{lma}
 Inverse limits of finite disjoint unions of unit intervals are generalized arc-like spaces.
\end{lma}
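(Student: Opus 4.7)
The plan is to realize $X$ concretely as an inverse limit $\varprojlim X_n$ with bonding maps $p_{n+1,n}\colon X_{n+1}\to X_n$, where each $X_n$ is a finite disjoint union of unit intervals, and to show that for every $\epsilon>0$ one of the canonical projections $\pi_n\colon X\to X_n$ is an $(\epsilon,\delta)$-map for some $\delta>0$. Since each $X_n$ has the form $I_1\sqcup\cdots\sqcup I_{k_n}$ required by \autoref{dfn:GenArcLik}, this is exactly what needs to be produced.

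The first step is to fix a compatible metric on $X$ that keeps the fibers of $\pi_n$ under good control. Equipping each $X_n$ with its standard metric $d_n$ (Euclidean on each interval, and say distance $1$ between distinct components), a convenient choice is
\[
 d(x,y) \;=\; \sum_{n\geq 1} 2^{-n}\min\bigl(1,d_n(\pi_n(x),\pi_n(y))\bigr).
\]
This metric induces the inverse limit topology on $X$, and, crucially, if $x,y\in X$ satisfy $\pi_n(x)=\pi_n(y)$ then they agree in coordinates $1,\ldots ,n$, whence $d(x,y)\leq\sum_{k>n}2^{-k}=2^{-n}$. Hence $\diam(\pi_n^{-1}(y))\leq 2^{-n}$ for every $y\in X_n$, uniformly in $y$. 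Given $\epsilon>0$, I would pick $n$ large enough that $2^{-n}<\epsilon/2$, making $\pi_n$ an $(\epsilon/2)$-map.

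To upgrade this to an $(\epsilon,\delta)$-map, I would argue by compactness and contradiction. Suppose no $\delta>0$ works; then for every $k\in\NN$ there is a subset $Z_k\subseteq X_n$ with $\diam(Z_k)<1/k$ but $\diam(\pi_n^{-1}(Z_k))\geq\epsilon$, so I can pick $x_k,x_k'\in\pi_n^{-1}(Z_k)$ with $d(x_k,x_k')\geq\epsilon$. Using compactness of $X$ (which follows from compactness of each $X_n$), pass to convergent subsequences $x_k\to x$ and $x_k'\to x'$. Continuity of $\pi_n$ together with $\diam(Z_k)\to 0$ forces $\pi_n(x)=\pi_n(x')$, so $x$ and $x'$ lie in the common fiber $\pi_n^{-1}(\pi_n(x))$ with $d(x,x')\geq\epsilon$, contradicting the uniform bound $\diam(\pi_n^{-1}(\pi_n(x)))\leq 2^{-n}<\epsilon/2$.

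The main obstacle I anticipate is the bookkeeping around the choice of metric on $X$: one needs both that it induces the inverse limit topology and that it yields the uniform fiber bound $\diam(\pi_n^{-1}(y))\leq 2^{-n}$. Once those are in place, the compactness upgrade from an $(\epsilon/2)$-map to an $(\epsilon,\delta)$-map is essentially mechanical, and the conclusion then follows directly from \autoref{dfn:GenArcLik} (or, equivalently, from \autoref{rmk:OneInt} after post-composing $\pi_n$ with a rescaling into $[0,1]$).
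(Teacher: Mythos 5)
Your proof is correct, and the overall strategy coincides with the paper's: put the standard product-type metric $\sum_n 2^{-n}(\cdots)$ on $X$ and show that the canonical projection $\pi_n$ is the desired $(\epsilon,\delta)$-map once $n$ is large enough that the tail of the series is below $\epsilon/2$. Where you diverge is in how the $\delta$ is produced. The paper argues directly: for $i\leq n$ the $i$-th coordinate of a point in $\pi_n^{-1}(Z)$ is $f_{i,n}$ applied to its $n$-th coordinate, so uniform continuity of the finitely many bonding maps $f_{1,n},\ldots ,f_{n,n}$ yields an explicit $\delta$ with $\diam(\pi_n^{-1}(Z))\leq\epsilon$ whenever $\diam(Z)\leq\delta$. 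You instead first observe the fiber bound $\diam(\pi_n^{-1}(y))\leq 2^{-n}$ (points in a common fiber agree in coordinates $1,\ldots ,n$), making $\pi_n$ an $\epsilon/2$-map, and then invoke the general principle that on a compact metric space an $\epsilon'$-map is automatically an $(\epsilon,\delta)$-map for $\epsilon>\epsilon'$ and some $\delta$, proved by a sequential compactness contradiction. Your route is slightly less constructive (no explicit $\delta$) but isolates a reusable fact that the paper itself relies on implicitly elsewhere (e.g.\ when it passes from the onto $\epsilon$-map $g_2$ to the statement that $\diam(g_2^{-1}(Z))<2\epsilon$ for $Z$ of small diameter); the paper's route avoids the contradiction argument at the cost of tracking the moduli of continuity of the bonding maps. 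One cosmetic point: $\diam(\pi_n^{-1}(Z_k))\geq\epsilon$ only lets you choose $x_k,x_k'$ with $d(x_k,x_k')\geq\epsilon-1/k$ rather than $\geq\epsilon$, but since your fiber bound is $2^{-n}<\epsilon/2$ the contradiction survives unchanged.
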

\begin{proof}
Let $X$ be an inverse limit of finite disjoint unions of unit intervals, and let $( [0,1]\sqcup\cdots\sqcup [0,1] ,f_{i,j} )$ be its associated inverse system.

Recall that the metric on $X$ is defined to be 
\[
 \text{d}((x_{i})_{i=1}^{\infty} , (y_{i})_{i=1}^{\infty})=\sum_{i=1}^{\infty} \frac{1}{2^{i}} \frac{\text{d}_{i}(x_{i},y_{i})}{1+\text{d}_{i}(x_{i},y_{i})},
\]
where $\text{d}_{i}$ is the distance in the i-th component of the inverse system.

Also recall that the distance between two points $x,y$ in $[0,1]\sqcup\cdots\sqcup [0,1]$ is either the usual distance if $x,y$ belong to the same connected component or $2$ if they do not.\vspace{0.1cm}

 Given $\epsilon >0$, let $n$ be such that $\sum_{i> n} 2/2^{-i}<\epsilon/2$.
 
 Then, since for every fixed $n$ the maps $f_{i,n}$ are uniformly continuous for every $i\leq n$, there exists $\delta >0$  such that
 \[
 \text{d}_{i}(f_{i,n}(x),f_{i,n}(y))\leq\epsilon /2n
 \]
 whenever $\text{d}_{n}(x,y)\leq\delta$.
 
 Let $\pi_{n}\colon X\rightarrow [0,1]\sqcup\cdots\sqcup [0,1]$ be the $n$-th canonical projection map, and take $Z\subset\pi_{n}(X)$ with $\diam_{n} (Z)\leq \delta$,
 
 Take $x,y\in \pi_{n}^{-1}(Z)$. Then, we have
 \[
  \text{d}(x,y)=\sum_{i=1}^{n} \frac{1}{2^{i}}\frac{\text{d}_i(f_{i,n}(x_{n}),f_{i,n}(y_{n}))}{1+ \text{d}_i(f_{i,n}(x_{n}),f_{i,n}(y_{n}))} + \sum_{i>n}\frac{1}{2^{i}} \frac{\text{d}_{i}(x_{i},y_{i})}{1+\text{d}_{i}(x_{i},y_{i})}\leq n\frac{\epsilon}{2n} +
  \sum_{i>n}\frac{1}{2^{i}} 2\leq
  \epsilon ,
 \]
since any pair of elements is at distance at most $2$.

This shows that $\diam (\pi_{n}^{-1}(Z)) \leq \epsilon$, and so $\pi_{n}$ is an $(\epsilon ,\delta)$-map, as required.
\end{proof}

\begin{prp}\label{eq_AC}
 A compactum $X$ is a generalized arc-like space if and only if it is almost chainable.
\end{prp}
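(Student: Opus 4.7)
The plan is to prove both directions via the correspondence between almost chains in $X$ and $(\epsilon,\delta)$-maps from $X$ onto disjoint unions of unit intervals, mirroring the classical equivalence between chainable continua and arc-like continua \cite[Theorem~12.11]{Nadl92}.

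For the direction generalized arc-like $\Rightarrow$ almost chainable, fix $\epsilon > 0$ and choose an $(\epsilon,\delta)$-map $f \colon X \to I_1 \sqcup \cdots \sqcup I_n$. In each $I_j = [0,1]$ I pick a finite chain $U^j_1,\ldots,U^j_{k_j}$ of open subintervals of diameter less than $\delta$ covering $I_j$, and concatenate these chains in the order $I_1,\ldots,I_n$ to form a sequence $V_1,\ldots,V_K$ of open subsets of the disjoint union. This is an almost chain: consecutive members within a single $I_j$ intersect by construction, while members whose indices differ by at least two are disjoint, either by the chain property inside a single $I_j$ or because they lie in different components. The preimages $C_i := f^{-1}(V_i)$ then cover $X$, are open, inherit the disjointness condition from the $V_i$, and satisfy $\diam(C_i) < \epsilon$ by the $(\epsilon,\delta)$-map property, yielding the desired $\epsilon$-almost chain.

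For the converse, given $\epsilon > 0$ I would take an almost chain $C_1,\ldots,C_k$ covering $X$ with mesh less than $\epsilon/3$. The structural key is that the index set partitions into maximal blocks $B_1,\ldots,B_n$ of consecutive indices inside which $C_i \cap C_{i+1} \neq \emptyset$, while consecutive indices straddling a block boundary give disjoint sets by maximality. The almost-chain condition then forces the open sets $X_s := \bigcup_{i \in B_s} C_i$ to be pairwise disjoint, hence clopen in $X$, and $\{C_i : i \in B_s\}$ becomes a chain on $X_s$ in the sense of \autoref{dfn_chainable}. On each $X_s$ I would build the classical nerve map $f_s \colon X_s \to I_s := [0,1]$ via a partition of unity subordinate to the chain, for instance $t_i(x) = \phi_i(x)/\sum_j \phi_j(x)$ with $\phi_i(x) = d(x, X_s \setminus C_i)$, and $f_s(x) = \frac{1}{|B_s|-1}\sum_{i \in B_s}(i - m_{s-1} - 1)\, t_i(x)$, with single-element blocks handled by a constant map. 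These glue through the clopen decomposition into a continuous map $f \colon X \to I_1 \sqcup \cdots \sqcup I_n$.

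The main obstacle, and the most delicate step of the proof, is verifying that $f$ is an $(\epsilon,\delta)$-map for some suitable positive $\delta$. Since distinct components of the target lie at distance $2$ apart in the disjoint-union metric, subsets of diameter less than $2$ are contained in a single $I_s$, so the verification reduces to each $f_s$ individually. Using that on a chain at most two consecutive $t_i$ can be simultaneously non-zero at a point, one computes $f_s^{-1}\bigl([(j-1)/(|B_s|-1),\, j/(|B_s|-1)]\bigr) \subseteq C_j \cup C_{j+1}$ within the chain $\{C_i : i \in B_s\}$. Hence for $\delta < \min\{2,\, \min_s(|B_s|-1)^{-1}\}$, the preimage of any $Z$ with $\diam(Z) < \delta$ is contained in at most three consecutive elements of a single chain, and iterating the triangle inequality through the intersections $C_i \cap C_{i+1}$ bounds its diameter by $3\,\mesh < \epsilon$, giving the required $(\epsilon,\delta)$-map and, via \autoref{rmk:OneInt} if desired, the generalized arc-like property.
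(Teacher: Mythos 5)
Your proposal is correct and follows essentially the same route as the paper: both directions hinge on decomposing an almost chain into maximal consecutive blocks that form genuine chains on clopen pieces (resp.\ pulling back $\delta$-chains on each interval of the target), the only difference being that you write out the partition-of-unity nerve map and the $3\cdot\mesh$ diameter estimate explicitly where the paper defers to the techniques of \cite[Theorem~12.11]{Nadl92}.
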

\begin{proof}
We follow the proof of \cite[Theorem 12.11]{Nadl92} while making some minor adjustments.

Assume first that $X$ is almost chainable and take $\epsilon >0$. Let $C$ be an $\epsilon /2$-almost chain covering $X$, and decompose it as $C=C_{1}\sqcup\cdots\sqcup C_{r}$ with
\[
 C_{1}=\{ C_{i,1} \}_{i} ,\cdots , C_{r}=\{ C_{i,r} \}_{i}
\]
$\epsilon /2$-chains in $X$. Note that for any $i,l$ we have $C_{i,j}\cap C_{l,k}=\emptyset$ whenever $j\neq k$ and 
\[
 X=\bigsqcup_{j} \bigcup_{i} C_{i,j}
\]
because $C$ covers $X$.

Take $s\leq r$. If $\vert C_{s}\vert \leq 2$, consider the map $f_{s}\colon \cup_{i} C_{i,s}\rightarrow [0,1]$ sending every element to $0$. Note that $\diam (f_{s}^{-1}(Z))<\epsilon$ whenever $\diam (Z)<1$.

If $\vert C_{s}\vert \geq 3$, use the techniques in \cite[Theorem 12.11]{Nadl92} to obtain an $(\epsilon,\delta_{s})$-map $f_{s}\colon \cup_{i} C_{i,s}\rightarrow [0,1]$.

Now define $f=f_{1}\sqcup\cdots\sqcup f_{r}\colon X\rightarrow I_{1}\sqcup\cdots\sqcup I_{r}$, where $I_{l}=[0,1]$ for every $l$. Note that this is clearly continuous.

Setting $\delta< \min\{1, \delta_{s}\}$ one gets that, if the diameter of $Z\subset I_{1}\sqcup\cdots\sqcup I_{r}$ is less than $\delta$, $Z$ is a subset of some $I_{l}$ (since the distance between disjoint components is $2$).

This implies that $f^{-1}(Z)=f^{-1}_{l}(Z)$ has diameter at most $\epsilon$, as required.
 
 Conversely, if $X$ is a generalized arc-like, then given any $\epsilon$ there exists $\delta$ with $f\colon X\rightarrow I_{1}\sqcup\cdots\sqcup I_{r}$ an $(\epsilon ,\delta)$-map.
 
 For each $I_{l}$, consider a $\delta$-chain $C_{l}$. The inverse image of these chains through $f$ gives the desired $\epsilon$-almost chain for $X$.
\end{proof}

\begin{cor}\label{necessity}
 Inverse limits of disjoint unions of unit intervals are almost chainable. In particular, each connected component is arc-like.
\end{cor}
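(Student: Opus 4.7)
My plan is to reduce both statements to results already established in this section. The first assertion is immediate by composing the preceding lemma (inverse limits of finite disjoint unions of unit intervals are generalized arc-like spaces) with \autoref{eq_AC} (generalized arc-like $\Leftrightarrow$ almost chainable for compacta). So the actual content lies in the ``in particular'' clause.

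For that clause, I would fix a connected component $K\subseteq X$ and an $\epsilon>0$, aiming to construct an $\epsilon$-map from $K$ onto $[0,1]$ directly out of the generalized arc-like structure on $X$. Using \autoref{rmk:OneInt}, pick $\delta>0$ and an $(\epsilon,\delta)$-map $f\colon X\to[0,1]$. Since $K$ is a compact connected subspace of $X$, its image $f(K)$ is a compact connected subset of $[0,1]$, hence a closed subinterval $[a,b]$. Assuming $a<b$, I would compose $f|_K$ with the affine homeomorphism $\phi\colon[a,b]\to[0,1]$ to obtain a continuous surjection $g=\phi\circ f|_K\colon K\to[0,1]$. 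Each fiber satisfies $g^{-1}(y)\subseteq f^{-1}(\phi^{-1}(y))$, and since $\{\phi^{-1}(y)\}$ has diameter $0<\delta$, the $(\epsilon,\delta)$-map property of $f$ forces $\diam(g^{-1}(y))<\epsilon$. So $g$ is the desired $\epsilon$-map, and since $\epsilon$ was arbitrary $K$ is arc-like.

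The only delicate point is the degenerate case $a=b$: then $K\subseteq f^{-1}(\{a\})$ has diameter less than $\epsilon$, and letting $\epsilon$ shrink shows $K$ is a single point, which is arc-like by convention. I expect no real obstacle beyond this edge case; the main step is the elementary observation that restricting an $(\epsilon,\delta)$-map to a connected subset and rescaling its image automatically preserves the small-fiber condition.
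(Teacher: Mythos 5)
Your proof is correct. For the first assertion you do exactly what the paper does: combine the preceding lemma (inverse limits of finite disjoint unions of unit intervals are generalized arc-like) with one direction of \autoref{eq_AC}. For the ``in particular'' clause the paper offers no explicit argument (its proof is just ``this follows from our previous two results''), whereas you supply a concrete one, and it is sound: a component $K$ is closed, hence a continuum; $f(K)$ is a subinterval $[a,b]$; rescaling by the affine homeomorphism $\phi$ gives a surjection $g=\phi\circ f|_K$ onto $[0,1]$ whose fibers $K\cap f^{-1}(\phi^{-1}(y))$ have diameter $<\epsilon$ because singletons have diameter $0<\delta$. The implicit route in the paper would instead be to restrict an $\epsilon$-almost chain to $K$ and observe that connectedness forces the nonempty links to form a genuine chain, then invoke Nadler's theorem; your $(\epsilon,\delta)$-map construction is more self-contained. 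One small logical point in your degenerate case: from $a=b$ at a single scale $\epsilon_0$ you only get $\diam(K)<\epsilon_0$, so the correct dichotomy is that either the degenerate case occurs for arbitrarily small $\epsilon$ (whence $K$ is a point) or $\epsilon$-maps onto $[0,1]$ exist for arbitrarily small $\epsilon$ (which suffices, since an $\epsilon$-map is an $\epsilon'$-map for $\epsilon'\geq\epsilon$); your phrasing glosses over this but the conclusion is unaffected.
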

\begin{proof}
 This follows from our previous two results. Note, however, that only one implication of \autoref{eq_AC} has been used.
\end{proof}

We will now show that the converse of \autoref{necessity} also holds, thus obtaining a characterization of inverse limits of finite disjoint copies of unit intervals; see \autoref{achain_iff_comAI}.

We first recall the following result from \cite{Freu37}, even though this particular formulation is from 
\cite[Lemma 2.14, page 184]{Thom92}.

\begin{lma}\label{lma:Freu}
 Let $C$ be a closed non-empty subset of $[0,1]$ and let $\epsilon >0$. Then, there exist a subset $Y\subset C$ which is a finite disjoint union of closed intervals (possibly degenerate) and a map $\alpha \colon C\rightarrow Y$ such that $\alpha (y)=y$ for every $y\in Y$ and $\vert \alpha (x)-x\vert <\epsilon$ for every $x\in C$.
\end{lma}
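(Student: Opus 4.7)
The strategy exploits the simple structure of closed subsets of $[0,1]$. The complement $[0,1]\setminus C$ is open in $[0,1]$, hence a countable disjoint union of (relatively) open intervals, and only finitely many of these can have length $\geq \epsilon$. Enumerating these large gaps as $(a_1,b_1),\ldots,(a_k,b_k)$, they partition $[0,1]$ into finitely many closed subintervals $P_0,\ldots,P_k$, and accordingly partition $C$ into finitely many clopen compact pieces $C_j := C\cap P_j$. By construction, each $C_j$ has the property that all its complementary gaps inside $P_j$ have length strictly less than $\epsilon$.

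Once $C$ has been decomposed in this way, the argument reduces to a local construction inside each $C_j$: build a subset $Y_j\subset C_j$ that is a finite disjoint union of closed intervals (degenerate ones allowed), together with a continuous retraction $\alpha_j\colon C_j\to Y_j$ satisfying $|\alpha_j(x)-x|<\epsilon$. Setting $Y := \bigsqcup_j Y_j$ and defining $\alpha$ piecewise by $\alpha|_{C_j} := \alpha_j$ then yields the desired data; continuity of $\alpha$ on $C$ is automatic because each $C_j$ is clopen in $C$, and the displacement estimate transfers directly.

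For the local step, I would proceed inductively inside $C_j$. Setting $y_0 := \min C_j$, I choose successive points $y_0 < y_1 < \cdots < y_n = \max C_j$ in $C_j$ with $y_{l+1}-y_l < \epsilon$, placed so that between consecutive chosen points there is either a complementary gap of $C_j$ (across which $\alpha_j$ may switch value) or a full closed subinterval of $C_j$ (which is then retained in $Y_j$ and on which $\alpha_j$ acts as the identity). The hypothesis that every complementary gap of $C_j$ inside $P_j$ has length $<\epsilon$ is exactly what guarantees at each step that a valid next point $y_{l+1}$ can be found within distance $\epsilon$ of $y_l$; a short compactness argument shows the induction terminates in finitely many steps. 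The retraction $\alpha_j$ is then defined to be the identity on the retained intervals and locally constant on $C_j$-neighborhoods of each isolated $y_l$.

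The main obstacle I anticipate is the continuity of $\alpha_j$. A naive nearest-point map on $C_j$ need not be continuous, since $C_j$ can simultaneously be highly disconnected and contain long closed subintervals. The $y_l$'s must therefore be chosen so that every place where $\alpha_j$ changes value lies strictly inside a complementary gap of $C_j$, and so that any long closed subinterval of $C_j$ is carried by the identity onto itself as a component of $Y_j$. This bookkeeping is where the $<\epsilon$ hypothesis on gaps is used most delicately, and it is the heart of the proof.
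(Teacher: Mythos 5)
The paper offers no proof of this lemma at all: it is imported verbatim from Freudenthal's work \cite{Freu37} in the formulation of \cite[Lemma~2.14]{Thom92}, so there is no in-paper argument to measure yours against. Your proposal is a correct, self-contained elementary proof, and its architecture is the natural one: discard the finitely many complementary gaps of length at least $\epsilon$ so that $C$ splits into finitely many clopen pieces $C_j$ whose gaps are all shorter than $\epsilon$; inside each piece choose a finite increasing chain $y_0<\cdots<y_n$ in $C_j$ with consecutive differences below $\epsilon$; keep as components of $Y_j$ the maximal runs $[y_a,y_b]$ contained in $C_j$ (and isolated $y_l$'s as degenerate intervals); and make $\alpha_j$ the identity there while collapsing the rest of $C_j$ onto the nearest retained endpoint, placing every jump of $\alpha_j$ strictly inside a complementary gap so that continuity follows from clopenness. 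What the citation buys the paper is brevity; what your argument buys is transparency about exactly where the hypothesis enters. Two details deserve to be written out in a full version. First, for the greedy selection of $y_{l+1}$ to both succeed and terminate you should note that the \emph{supremum} of the gap lengths of $C_j$ is strictly less than $\epsilon$ (only finitely many gaps exceed any positive threshold, each of length $<\epsilon$); picking the spacing parameter strictly between that supremum and $\epsilon$ gives that every two greedy steps advance by a fixed positive amount, which is the termination argument you allude to. Second, your closing description of $\alpha_j$ (identity on retained intervals, locally constant near isolated $y_l$) omits the fringe points of $C_j$ lying between the endpoint of a retained interval and the adjacent cut point; these must be sent to that endpoint, with the displacement bound again coming from $y_{l+1}-y_l<\epsilon$. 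Neither is a missing idea --- both are routine once, as you prescribe, all value changes of $\alpha_j$ are located inside genuine gaps of $C_j$.
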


\begin{rmk}
 With the previous notation, note that $\alpha$ is an onto $2 \epsilon$-map from $C$ to $Y$.
 
 Indeed, for any $y\in Y$ and for any $x\in C$ with $\alpha (x)=y$, we must have $\vert x-y\vert <\epsilon$. Since $y\in \alpha^{-1}(y)$, it follows that $\diam (\alpha^{-1}(y))\leq 
 \diam (B_{\epsilon}(y))<2 \epsilon$.
\end{rmk}

From now on, by a \emph{closed interval} we will mean a possibly degenerate closed interval (that is to say, either a point or a non-degenerate closed interval).

\begin{pgr}\label{pgr:epsilon_Map_reduc}
Given any $(\epsilon ,\delta)$-map $f\colon X\rightarrow [0,1]$, consider the induced onto $\epsilon$-map $f\colon X\rightarrow \text{Im}(f)$.

Since $X$ is compact, so is $\text{Im}(f)$. Using \autoref{lma:Freu}, we can find an onto $\delta$-map $\alpha\colon\text{Im}(f)\rightarrow Y$ with $Y$ the finite disjoint union of  closed intervals and, consequently, we get an onto $\epsilon$-map $\alpha f\colon X\rightarrow Y$.

This shows that given any generalized arc-like compactum $X$ (equivalently, any almost chainable compactum, by \autoref{eq_AC}) and any $\epsilon >0$, there exists an onto $\epsilon$-map $f\colon X\rightarrow Y$ with $Y$ a  finite disjoint union of closed intervals. Indeed, given any $\epsilon >0$ we know by \autoref{dfn:GenArcLik} and \autoref{rmk:OneInt} that there exists an $(\epsilon ,\delta )$-map $f\colon X\to Y$. The conclusion now follows from the previous argument.
\end{pgr}

Using \autoref{pgr:epsilon_Map_reduc} above, we will see that $C(X)$ is a commutative AI-algebra for any generalized arc-like compactum $X$. Since we already know that $X$ is almost chainable whenever $C(X)$ is an AI-algebra (see \autoref{necessity}), this will show that $X$ is almost chainable if and only if $C(X)$ is a commutative AI-algebra.
\vspace{0.1cm}

The following lemma is a natural generalization of \cite[Lemma 12.17]{Nadl92}. We follow both the structure and the notation of the proof of said lemma.

\begin{lma}
 Let $X$ be a compactum, let $g_{1}\colon X\rightarrow Y_{1}$ be an onto continuous map with $Y_{1}$ a finite disjoint union of closed intervals, and let $\eta >0$.
 
 Then, there exists $\epsilon >0$ such that, for any onto $\epsilon$-map $g_{2}\colon X\rightarrow Y_{2}$ with $Y_{2}$ a finite disjoint union of closed intervals, there exists a continuous map $\varphi\colon Y_{2}\rightarrow Y_{1}$ such that $\vert g_{1}(x)-\varphi g_{2}(x)\vert <\eta$ for every $x\in X$.
\end{lma}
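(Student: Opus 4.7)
The plan is to construct $\varphi$ as a piecewise-affine map on each connected component of $Y_2$, reading off its values at partition nodes from single points in the corresponding fibers of $g_2$ composed with $g_1$. The key technical input is to convert the $\epsilon$-map property of $g_2$ into a Lebesgue-number-type control on the diameter of $g_2^{-1}(Z)$ for sets $Z\subset Y_2$ of small diameter, so that close nodes in $Y_2$ yield close values in $Y_1$.

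Before choosing $\epsilon$, I would first exploit the structure of $Y_1$. Let $\sigma>0$ denote the minimum distance between distinct connected components of $Y_1$, which is strictly positive by compactness and finiteness. Replacing $\eta$ with $\min(\eta,\sigma/2)$ (a strictly stronger conclusion) one may assume $\eta<\sigma$. Using uniform continuity of $g_1$ on the compact space $X$, choose $\mu>0$ such that $d_X(x,x')\leq\mu$ implies $d_{Y_1}(g_1(x),g_1(x'))<\eta/3$, and set $\epsilon=\mu/2$.

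Now suppose $g_2\colon X\to Y_2$ is an onto $\epsilon$-map. A standard compactness/subsequence argument produces $\delta>0$ with $\diam(g_2^{-1}(Z))\leq\mu$ whenever $Z\subset Y_2$ satisfies $\diam(Z)<\delta$; otherwise one could extract convergent subsequences producing two points in a single fiber at distance $>\mu>\epsilon$, contradicting the $\epsilon$-map hypothesis. Partition each component $J_l$ of $Y_2$ into nodes $y_0^{(l)}<\cdots<y_{N_l}^{(l)}$ of consecutive spacing less than $\delta$, pick $x_i^{(l)}\in g_2^{-1}(y_i^{(l)})$ (nonempty by surjectivity), and declare $\varphi(y_i^{(l)})=g_1(x_i^{(l)})$. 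By construction the values $\varphi(y_i^{(l)})$ and $\varphi(y_{i+1}^{(l)})$ are at distance $<\eta/3<\sigma$, which forces them into the \emph{same} component of $Y_1$; affine interpolation within that component then extends $\varphi$ continuously over $[y_i^{(l)},y_{i+1}^{(l)}]$.

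For the estimate, given $x\in X$ with $y=g_2(x)\in[y_i^{(l)},y_{i+1}^{(l)}]$, the points $x,\,x_i^{(l)},\,x_{i+1}^{(l)}$ all lie in $g_2^{-1}([y_i^{(l)},y_{i+1}^{(l)}])$, a set of diameter at most $\mu$; hence $g_1(x)$ is within $\eta/3$ of both $\varphi(y_i^{(l)})$ and $\varphi(y_{i+1}^{(l)})$, and convexity of balls inside the single interval of $Y_1$ containing both values gives $|g_1(x)-\varphi(g_2(x))|<\eta/3<\eta$. I expect the main obstacle to be precisely the step ensuring that the piecewise-affine extension lands inside $Y_1$ rather than skipping across a gap between two components of $Y_1$; this is what forces the initial reduction $\eta<\sigma$ and the use of the factor $\eta/3$ (rather than $\eta/2$) in the uniform-continuity bound, and it is the only place where the disconnectedness of $Y_1$ genuinely intervenes.
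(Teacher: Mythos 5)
Your proof is correct and follows essentially the same strategy as the paper's: partition each component of $Y_2$ finely, define $\varphi$ at the nodes via $g_1$ applied to chosen points of the corresponding $g_2$-fibers, extend by linear interpolation, and control the error through the uniform continuity of $g_1$ together with the conversion of the $\epsilon$-map hypothesis into a diameter bound on $g_2^{-1}(Z)$ for sets $Z$ of small diameter. The only difference is in the bookkeeping: the paper (following Nadler) snaps the node values to a grid $\{s_i\}$ on $Y_1$ and tracks indices $i(j)$ with $\vert i(j)-i(j+1)\vert \leq 1$ to keep the interpolation inside one interval, whereas you take the node values to be the actual points $g_1(x_i^{(l)})$ and invoke the minimal gap $\sigma$ between components of $Y_1$ for the same purpose --- a slightly cleaner implementation of the same idea.
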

\begin{proof}
 First, write
 \[
  Y_{1} = J_{1}\sqcup\cdots\sqcup J_{n_{1}}\sqcup \{q_{1}\}\sqcup\cdots\sqcup\{ q_{m_{1}} \}
 \]
with $J_{k}$ closed non-degenerate intervals for every $k$.

Fix $m\in\mathbb{N}$ such that $1/m<\eta /2$ and define $s_{i}=i/m$ for every $0\leq i\leq m$. Since $g_{1}$ is uniformly continuous, there exists some $\gamma >0$ such that $\diam (g_{1}(A))<1/m$ whenever $\diam (A)<\gamma$.

Set $\epsilon =\gamma /2$ and fix an onto $\epsilon$-map $g_{2}\colon X\rightarrow Y_{2}$ as in the statement of the lemma. Recall that there exists $\delta >0$ such that $\diam (g_{2}^{-1}(Z) )<2\epsilon =\gamma $ whenever $\diam (Z)<\delta$.

Now fix $n\in\mathbb{N}$ such that $1/n<\delta /2$ and define $t_{j}=j/n$ for every $0\leq j\leq n$.

As before, write $Y_{2}$ as
\[
 Y_{2} = I_{1}\sqcup\cdots\sqcup I_{n_{2}}\sqcup \{p_{1}\}\sqcup\cdots\sqcup\{ p_{m_{2}} \}
\]
with $I_{l}$ closed non-degenerate intervals for every $l$.

For each $l,k$, consider the subsets
\[
 \begin{split}
  A_{1}^{k} &= [s_{0},s_{1}) \, ,\, 
  A_{i}^{k} = (s_{i-1},s_{i+1})\, ,\,
  A_{m}^{k} = (s_{m-1},s_{m}] \subset
  J_{k}
  \\
  B_{1}^{l} &=  [t_{0},t_{1}) \, ,\, 
  B_{j}^{l} = (t_{j-1},t_{j+1})\, ,\,
  B_{n}^{l} = (t_{n-1},t_{n}] \subset
  I_{l} .
 \end{split}
\]

By construction, we know that $\diam (B_{j}^{l})< \delta$ for every fixed $j,l$. This implies that $\diam ( g_{2}^{-1}(B_{j}^{l}))< \gamma$ and, consequently,
\[
 g_{1} ( g_{2}^{-1} (B_{j}^{l}) )\subset A_{i}^{k}\,\text{ or }\,
 g_{1} ( g_{2}^{-1} (B_{j}^{l}) )\subset \{ q_{r'}\}
\]
for some $i,k,r'$. Here, we have taken $m$ large enough so that the distance between the connected components of $Y_{1}$ is greater than $1/m$.

Note that, by the same argument, we also have that for every $r\leq m_{2}$ there exist $i,k$ with $g_{1} ( g_{2}^{-1} (p_{r}) )\subset A_{i}^{k}$ or $g_{1} ( g_{2}^{-1} (p_{r}) )\subset \{ q_{r'}\}$ for some $r'$.

Moreover, since for every fixed $l$ and every $j$ we have $B^{l}_{j}\cap B^{l}_{j+1}\neq \emptyset$, we get
\[
 \emptyset \neq g_{1}(g_{2}^{-1}(B^{l}_{j}))\cap g_{1}(g_{2}^{-1}(B^{l}_{j+1}))
\]
because $g_{2}$ is onto.

It follows that, for every fixed $l$, the sets $g_{1}(g_{2}^{-1}(B_{j}^{l}))$ belong to the same connected component of $Y_{1}$ for every $j$. Thus, for every $l$ there either exists $k$ such that $g_{1}(g_{2}^{-1}(I_{l}))\subset J_{k}$ or there exists $r'$ such that $g_{1}(g_{2}^{-1}(I_{l}))=\{q_{r'}\}$.
\vspace{0.1cm}

Given a connected component $Y$ of $Y_{2}$, we define the map $\varphi_{Y}\colon Y\to Y_{1}$ as follows:

 If $g_{1}(g_{2}^{-1}(Y))=\{ q_{r'}\}$ for some $r'$, define $\varphi_{Y}\colon Y\to \{ q_{r'}\}$ as the constant map.
 
 Else, there exists some $k$ such that $g_{1}(g_{2}^{-1}(Y))\subset J_{k}$. If $Y$ is degenerate, we can find $A_{i}^{k}\subset J_{k}$ such that $g_{1}(g_{2}^{-1}(Y))\subset A_{i}^{k}$ for some $i,k$. Define $\varphi_{Y}\colon Y\to A_{i}^{k}\subset J_{k}$ as the constant map $\varphi_{Y}\equiv s_{i}$.
 
 Finally, if $Y$ is non-degenerate, it is of the form $Y=I_{l}$. Then, for every $j$,
 fix $i(j)$ such that $g_{1}(g_{2}^{-1}(B_{j}^{l}))\subset A_{i(j)}^{k}$, and recall that 
 \[
 \emptyset \neq g_{1}(g_{2}^{-1}(B^{l}_{j}))\cap g_{1}(g_{2}^{-1}(B^{l}_{j+1}))
\]
for every $j$. This shows that $\vert i(j)-i(j+1)\vert \leq 1$.

Define $\varphi_{I_{l}}\colon I_{l}\rightarrow J_{k}$ as $\varphi_{I_{l}}(t_{j})=s_{i(j)}$ and extend it linearly.
\vspace{0.1cm}

Let $\varphi:=\varphi_{I_{1}}\sqcup \cdots\sqcup \varphi_{I_{n_{2}}}\sqcup \varphi_{p_{1}}\sqcup\cdots\sqcup \varphi_{p_{m_{2}}}\colon Y_{2}\rightarrow Y_{1}$, which is clearly continuous. We will now see that $\vert g_{1}(x)-\varphi g_{2}(x)\vert <\eta$.

Thus, let $x\in X$ and let $B\subset Y_{2}$ such that $g_{2}(x)\in B$ with $B$ being either $B^{l}_{j}$ for some $l,j$ or $\{ p_{r}\}$ for some $r$. Note that $g_{1}(x)\in g_{1}(g_{2}^{-1}(B))$.

Thus, if $g_{1}(g_{2}^{-1}(B))=\{ q_{r'}\}$ for some $r'$, we have $g_{1}(x)=q_{r'}$ and, consequently,
\[
 \vert g_{1}(x)-\varphi g_{2}(x)\vert =
 \vert q_{r'}-q_{r'}\vert =0
\]
by the definition of $\varphi$.

Finally, if $g_{1}(g_{2}^{-1}(B))\subset A^{k}_{i}$, we have that $g_{1}(x)\in A^{k}_{i}$. Therefore, one gets $\vert g_{1}(x)-s_{i}\vert <1/m$. There are now two different situations:

If $B=\{ p_{r}\}$ for some $r$, we have defined $\varphi_{p_{r}}$ as the constant map $s_{i}$. Thus, one gets
 \[
  \vert g_{1}(x)-\varphi g_{2}(x)\vert =
  \vert g_{1}(x)-s_{i}\vert <1/m<\eta /2.
 \]
 
 Else, if $B=B^{l}_{j}$ for some $l$ and $j$, let $i(j)$ be the previously fixed integer such that $g_{1}(g_{2}^{-1}(B))\subset A^{k}_{i(j)}$.

Then, since $g_{2}(x)\in B$, we either have $t_{j-1}\leq g_{2}(x)\leq t_{j}$ or $t_{j}\leq g_{2}(x)\leq t_{j+1}$. This implies that $\varphi (g_{2}(x))$ is either between  $s_{i(j-1)}$ and $s_{i(j)}$ or between $s_{i(j)}$ and $s_{i(j+1)}$.

Since $\vert i(j)-i(j+1)\vert \leq 1$, the triangle inequality implies that
\[
 \vert g_{1}(x)-\varphi g_{2}(x)\vert \leq 
 \vert g_{1}(x)-s_{i(j)} \vert +
 \vert s_{i(j)}-\varphi g_{2}(x)\vert \leq
 2/m<\eta
\]
as required.
\end{proof}

In order to prove our next result, we will need the following proposition, a proof of which can be found in \cite[Proposition 12.18]{Nadl92}.

\begin{prp}\label{prp:Nad}
 Let $(X,d)$ be a compactum and let $Y= \varprojlim (Y_{i},f_{i})$ be an inverse limit of compacta $(Y_{i},d_{i})$ with $f_i\colon Y_{i+1}\to Y_i$.
 
 Assume that there exist two sequences of strictly positive real numbers $(\delta_{i})$, $(\epsilon_{i})$ with $\lim\epsilon_{i}=0$ and a family of onto $\epsilon_{i}$-maps $g_{i}\colon X\rightarrow Y_{i}$ such that the following conditions hold
 \begin{enumerate}[(i)]
  \item For every pair $i<j$, we have $\diam (f_{i,j}(A))\leq \delta_{i}/2^{j-i}$ for any $A\subset Y_{j}$ with $\diam (A)\leq\delta_{j}$.
  \item $d_{i}( g_{i}(x),g_{i}(y) )>2\delta_{i}$ whenever $d(x,y)\geq 2\epsilon_{i}$.
  \item $d_{i}(g_{i},f_{i}g_{i+1})\leq \delta_{i}/2$.
 \end{enumerate}

 Then, $X\cong Y$.
\end{prp}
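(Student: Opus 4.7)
The plan is to construct a homeomorphism $g\colon X\to Y$ directly from the approximate data provided. Since the $g_i$ are only approximately compatible with the bonding maps (by (iii)), the naive map $x\mapsto(g_i(x))_i$ need not land in $Y$. I would therefore first build an honest compatible system by setting
\[
 g_i^\infty(x) \;\coloneqq\; \lim_{n\to\infty} f_{i,n}\bigl(g_n(x)\bigr),
\]
and then defining $g(x)\coloneqq (g_i^\infty(x))_i$. The first step is to verify that these limits exist and are continuous. Applying (i) to the two-point set $\{g_k(x),\, f_k g_{k+1}(x)\}$, which by (iii) has diameter at most $\delta_k/2\leq\delta_k$, yields
\[
 d_i\bigl(f_{i,k}g_k(x),\, f_{i,k+1}g_{k+1}(x)\bigr) \leq \delta_i/2^{k-i}
\]
for each $k>i$, and (iii) handles the base case $k=i$. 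A telescoping and geometric-series argument then shows that the sequence $\bigl(f_{i,n}g_n(x)\bigr)_n$ is Cauchy uniformly in $x$, with $d_i(g_i(x),\, g_i^\infty(x))$ controlled by a small multiple of $\delta_i$. Uniform convergence forces each $g_i^\infty$ to be continuous, and the relation $f_i\circ g_{i+1}^\infty = g_i^\infty$---so that $g(x)\in Y$---follows from continuity of $f_i$ by passing inside the limit. Continuity of $g$ into $Y$ is then immediate from the product topology on the inverse limit.

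Injectivity uses condition (ii) contrapositively: if $g(x)=g(y)$ then $g_i^\infty(x)=g_i^\infty(y)$ for every $i$, so by the triangle inequality and the estimate above we obtain $d_i(g_i(x),\, g_i(y))\leq 2\delta_i$ (here one must be careful with constants; a tighter accounting using that the relevant two-point sets already have diameter $\leq\delta_k/2$, or a mild sharpening via passage to a subsequence, secures the required $2\delta_i$ threshold). By the contrapositive of (ii) this forces $d(x,y)<2\epsilon_i$ for every $i$, and since $\epsilon_i\to 0$ we conclude $x=y$.

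The main obstacle is surjectivity, where one must exploit that each $g_i$ is onto together with the compactness of $X$. Given $y=(y_i)\in Y$, I would pick $x_i\in X$ with $g_i(x_i)=y_i$ and extract a convergent subsequence $x_{i_k}\to x$ using compactness. The identity $g(x)=y$ then follows by passing to the limit in the defining formula for each $g_j^\infty$, using the coherence $f_{j,i_k}(y_{i_k})=y_j$ for $j\leq i_k$ (which holds because $y$ lies in $Y$), the uniform estimate $d_j(g_j,\, g_j^\infty)\lesssim \delta_j$, and continuity of the bonding maps. Once surjectivity is in hand, $g$ is a continuous bijection from the compactum $X$ onto the Hausdorff space $Y$, hence a homeomorphism, completing the proof.
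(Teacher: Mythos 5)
Your proposal reconstructs exactly the argument that the paper relies on: the paper gives no proof of this proposition and defers to \cite[Proposition~12.18]{Nadl92}, and the proof there is precisely your construction of the compatible limit maps $g_i^\infty=\lim_n f_{i,n}g_n$ via a Cauchy estimate from (i) and (iii), injectivity from (ii), surjectivity from surjectivity of the $g_i$ together with compactness of $X$, and the closing compact-to-Hausdorff step. All of these steps are sound as you describe them, including the surjectivity argument (where the estimate you actually need is the tail bound $d_j(f_{j,i_k}g_{i_k},g_j^\infty)\to 0$ as $i_k\to\infty$, not just $d_j(g_j,g_j^\infty)\lesssim\delta_j$, but that tail bound is exactly what the Cauchy computation provides).

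The one point that does not close as written is the constant bookkeeping in the injectivity step, which you flag but do not resolve. The telescoping estimate gives, for every $x$,
\[
 d_i\bigl(g_i(x),g_i^\infty(x)\bigr)\;\le\;\frac{\delta_i}{2}+\sum_{k=i+1}^{\infty}\frac{\delta_i}{2^{k-i}}\;=\;\frac{3}{2}\,\delta_i ,
\]
so $g_i^\infty(x)=g_i^\infty(y)$ only yields $d_i(g_i(x),g_i(y))\le 3\delta_i$, overshooting the $2\delta_i$ threshold required to invoke the contrapositive of (ii). Neither of your proposed repairs fixes this: hypothesis (i) is a threshold condition, returning the same bound $\delta_i/2^{j-i}$ for \emph{any} set of diameter at most $\delta_j$, so knowing that the two-point sets $\{g_k(x),f_kg_{k+1}(x)\}$ have diameter at most $\delta_k/2$ rather than $\delta_k$ does not halve the per-term bound; and passing to a subsequence does not change the limit $g_i^\infty$. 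The mismatch is really an artifact of how the constants are transcribed in the statement --- with, say, the conclusion $\delta_i/2^{j-i+1}$ in (i) one gets $d_i(g_i,g_i^\infty)\le\delta_i$ and the $2\delta_i$ threshold is met, and it is to such a calibration that the cited source is tuned --- but with the hypotheses exactly as printed you should either rebalance the constants or observe that a $3\delta_i$ threshold in (ii) suffices, rather than assert that a tighter accounting delivers $2\delta_i$.
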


We summarize our results in the following.

\begin{thm}\label{achain_iff_comAI}
 Let $X$ be a compactum. The following are equivalent:
 \begin{enumerate}[(i)]
  \item $X$ is almost chainable.
  \item $X$ is a generalized arc-like space.
  \item $X$ is homeomorphic to an inverse limit of finite disjoint copies of unit intervals.
  \item $C(X)$ is an AI-algebra.
 \end{enumerate}
\end{thm}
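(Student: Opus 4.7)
Three of the four equivalences follow from results already in the excerpt: (i) $\Leftrightarrow$ (ii) is \autoref{eq_AC}, the implication (iii) $\Rightarrow$ (i) is \autoref{necessity}, and (iii) $\Leftrightarrow$ (iv) is the content of \autoref{pgr:Thom}, relying on Thom's decomposition of unital commutative AI-algebras. The only remaining implication is (ii) $\Rightarrow$ (iii), which is where the main work lies.

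My plan for this missing implication is to invoke \autoref{prp:Nad}. Concretely, I will build inductively an inverse system $(Y_i, f_i)$ of finite disjoint unions of closed intervals together with onto $\epsilon_i$-maps $g_i\colon X \to Y_i$, with $\epsilon_i \to 0$, satisfying the three compatibility conditions of \autoref{prp:Nad}. Once this is achieved, the proposition yields $X \cong \varprojlim(Y_i, f_i)$, and rescaling each nondegenerate component to $[0,1]$ (treating any degenerate component as a $[0,1]$ collapsed by the bonding map, compatibly with the normal form in \autoref{pgr:Thom}) presents $X$ as an inverse limit of finite disjoint unions of unit intervals, which is (iii).

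The two tools I rely on at each inductive step are \autoref{pgr:epsilon_Map_reduc}, which supplies onto $(\epsilon, \delta')$-maps from $X$ to finite disjoint unions of closed intervals, and the preceding lemma, which, given $g_i$ and a tolerance $\eta$, yields a threshold $\epsilon > 0$ such that every onto $\epsilon$-map $g_{i+1}$ admits a continuous $\varphi\colon Y_{i+1} \to Y_i$ within $\eta$ of $g_i$. Explicitly: with $g_i$ an $(\epsilon_i, \delta'_i)$-map in hand, I first choose $\delta_i$ with $2\delta_i < \delta'_i$ to force condition (ii) of \autoref{prp:Nad}; I then apply the preceding lemma with $\eta := \delta_i/2$ to produce a threshold $\epsilon$, pick $\epsilon_{i+1} < \min(\epsilon, \epsilon_i/2)$, invoke \autoref{pgr:epsilon_Map_reduc} to obtain $g_{i+1}$, and set $f_i := \varphi$, which secures condition (iii). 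Condition (i) for $j = i+1$ is then guaranteed by further shrinking $\delta_{i+1}$ using the uniform continuity of $f_i$, so that $\diam(f_i(A)) \leq \delta_i/2$ whenever $\diam(A) \leq \delta_{i+1}$; the general case $j > i+1$ follows by telescoping these estimates along the tower.

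The main obstacle is the careful bookkeeping of constants: the three hypotheses of \autoref{prp:Nad} are intertwined, and each stage depends on every previously chosen parameter, so the inductive hypothesis must carry enough quantitative information to permit the repeated shrinkings while still forcing $\epsilon_i \to 0$. A secondary technical point is the passage from finite disjoint unions of possibly degenerate closed intervals (the natural output of \autoref{pgr:epsilon_Map_reduc}) to finite disjoint unions of unit intervals in item (iii); this is handled by rescaling each nondegenerate component and by realizing any degenerate components as collapsing images of $[0,1]$ under the bonding maps, which does not alter the inverse limit up to homeomorphism.
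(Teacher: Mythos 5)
Your proposal follows essentially the same route as the paper: after handling the easy implications via \autoref{eq_AC}, \autoref{necessity} and \autoref{pgr:Thom}, the paper likewise builds the inverse system inductively from \autoref{pgr:epsilon_Map_reduc} and the unnumbered approximation lemma, verifies conditions (i)--(iii) of \autoref{prp:Nad} with the same parameter choices ($\eta=\delta_i/2$, $\epsilon_{i+1}$ below the lemma's threshold, $\delta_{i+1}$ shrunk by uniform continuity of $f_i$), and defers the remaining quantitative bookkeeping to Nadler's Theorem~12.19. The only cosmetic differences are that you obtain condition (ii) of \autoref{prp:Nad} from the $(\epsilon,\delta)$-map structure where the paper uses a compactness argument, and that you pass explicitly to unit intervals whereas the paper simply concludes statement (iv) directly from the inverse limit of closed intervals.
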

\begin{proof}
Conditions (i) and (ii) are equivalent by \autoref{eq_AC}, while (iii) is equivalent to (iv) by the arguments in \autoref{pgr:Thom}. Further, that (iii) implies (i) follows from \autoref{necessity}. Thus, we are left to prove that (i) implies (iii).

 Let $X$ be an almost chainable compactum. Then, for every $\epsilon >0$ there exists an onto $\epsilon$-map $f\colon X\rightarrow Y$ with $Y$ a finite disjoint union of closed intervals.
 
 As in \cite[Theorem 12.19]{Nadl92}, we will inductively construct sequences of maps and real numbers satisfying the conditions of \autoref{prp:Nad}. We give the proof here for the sake of completeness, although the only difference with the original proof is that we replace $[0,1]$ with $Y$.\vspace{0.1cm}
 
 Let $0<\epsilon_{1}\leq 1$, and consider an onto $\epsilon_{1}$-map $g_{1}$ from $X$ to a finite disjoint union of closed intervals $Y_{1}$.
 
 Note that, in particular, we must have $g_{1}(x)\neq g_{1}(y)$ whenever $d(x,y)\geq 2 \epsilon_{1}$. By compactness of $X$, this implies that there exists some $\delta_{1} >0$ such that $\vert g_{1}(x)-g_{1}(y)\vert >2\delta_{1} $ whenever $d(x,y)\geq 2\epsilon_{1}$.
 
 Setting $\eta =\delta_{1}/2$, let $\epsilon >0$ be as in our previous lemma and set $\epsilon_{2}=\min \{ 1/2,\epsilon \}$. Then, let $g_{2}\colon X\rightarrow Y_{2}$ be an onto $\epsilon_{2}$-map given by the almost chainability of $X$.
 
 By our choice of $\epsilon_{2}$, there exists some $f_{1}\colon Y_{2}\rightarrow Y_{1}$ such that
 \[
  \vert g_{1}(x)-f_{1}g_{2}(x) \vert <\eta=\delta_{1}/2
 \]
for every $x\in X$.

As before, there exists $\delta_{2}>0$ such that $\vert g_{2}(x)-g_{2}(y)\vert >2\delta_{2} $ whenever $d(x,y)\geq 2\epsilon_{2}$. Furthermore, by the uniform continuity of $f_{1}$, we can choose $\delta_{2}>0$ so that $\diam (f_{1}(A))\leq\eta$ whenever $\diam (A)\leq \delta_{2}$.

This shows that $g_1, g_2$ satisfy conditions (i)-(iii) of \autoref{prp:Nad}.

Proceeding as in \cite[Theorem 12.19]{Nadl92}, one can now inductively find  $\epsilon_{i},\delta_{i}>0$ and maps $g_{i}\colon X\rightarrow Y_{i}$, $f_{i}\colon Y_{i+1}\rightarrow Y_{i}$ satisfying conditions (i)-(iii) \autoref{prp:Nad}.
\vspace{0.1cm}

Applying \autoref{prp:Nad}, we have $X\cong \lim (Y_{k},f_{k})$ with $Y_{k}$ finite disjoint unions of closed intervals. Clearly, this implies that $C(X)$ is an AI-algebra, as required.\vspace{0.1cm}
\end{proof}

\Msection{$\Lsc$-like $\Cu$-semigroups}{Lsc-like Cu-semigroups}\label{Lsc_like_Cu_sem}

In this section, we introduce $\Lsc$-like \CuSgp{s} (\autoref{dfn:LscLikeCuSgp}) and prove some of their main properties. As we shall prove in \autoref{LscXS_iff_Lsc_like}, such \CuSgp{s} are exactly those that are $\Cu$-isomorphic to the \CuSgp{} of lower-semicontinuous functions $\Lsc (X,\NNbar )$ for some $T_{1}$ topological space.

Using \autoref{wayb_decomp}, we also prove that the semigroup $\Lsc (X,\overline{\NN})$ is a \CuSgp{} whenever $X$ is compact and metric; see \autoref{cor:LscXCu}.

\begin{dfn}
 A \CuSgp{} $S$ will be called \emph{distributively lattice ordered} if $S$ is a distributive lattice such that, given any pair of elements $x,y\in S$, we have $x+y=(x\vee y) + (x\wedge y)$.
 
 We will say that a distributively lattice ordered \CuSgp{} is \emph{complete} if suprema of arbitrary sets exist.
\end{dfn}

\begin{rmk}
 Given two increasing sequences $(x_{n})_{n}$ and $(y_{n})_{n}$ in a complete distributively lattice ordered $\Cu$-semigroup, we have that $(\sup_{n} x_{n})\vee (\sup_{n} y_{n})=\sup_{n} (x_{n}\vee y_{n})$.
 
 Indeed, by definition we know that $\sup_{n} x_{n}=\vee_{n=1}^{\infty} x_{n}$ and the equality 
 \[
  (\vee_{n=1}^{\infty} x_{n})\vee (\vee_{n=1}^{\infty} y_{n})=
\vee_{n=1}^{\infty} (x_{n}\vee y_{n})
 \]
holds in any complete lattice.
\end{rmk}

Throughout the paper, we will say that a sum of finitely many indexed elements $x_{1}+\cdots+x_{n}$ is \emph{ordered} if the sequence $(x_{i})_{i=1}^{n}$ is increasing or decreasing.

\begin{lma}\label{std_formula}
 Let $S$ be a distributively lattice ordered $\Cu$-semigroup. Given two finite decreasing sequences $(x_{i})_{i=1}^{m}, (y_{i})_{i=1}^{m}$, the following equality holds
 \[
  \sum_{i=1}^{m} (x_{i}+y_{i})=\sum_{i=1}^{2m} \vee_{j=0}^{m}(x_{j}\wedge y_{i-j})
 \]
where, in the right hand side, $x_{i}\wedge y_{k}=x_{i}$, and $x_{k}\wedge y_{i} =y_{i}$ whenever $k\leq 0$ and $y_{k}=x_{k}=0$ whenever $k>m$.

Note that the sum in the right hand side is decreasingly ordered.
\end{lma}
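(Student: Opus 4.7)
The plan is to prove the identity by induction on $m$. For the base case $m=1$, unwinding the conventions gives $z_1 = \bigvee_{j=0}^1 (x_j \wedge y_{1-j}) = y_1 \vee x_1$ and $z_2 = \bigvee_{j=0}^1 (x_j \wedge y_{2-j}) = x_1 \wedge y_1$, so the identity reduces to $x_1+y_1 = (x_1 \vee y_1) + (x_1 \wedge y_1)$, which is the defining property of a distributively lattice ordered $\Cu$-semigroup; the decreasing ordering $z_1 \geq z_2$ is then immediate.

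For the inductive step, I would peel off the last pair: write $x_m + y_m = (x_m \vee y_m) + (x_m \wedge y_m)$ using the base identity, and apply the inductive hypothesis to the decreasing sequences $(x_i)_{i=1}^{m-1}, (y_i)_{i=1}^{m-1}$ to get $\sum_{i=1}^{m-1}(x_i+y_i) = \sum_{i=1}^{2m-2} w_i$, with $w_i := \bigvee_{j=0}^{m-1}(x_j \wedge y_{i-j})$ under the length-$(m-1)$ conventions (so $x_m = y_m = 0$ in this formula). It then remains to show
\[
  \sum_{i=1}^{2m-2} w_i + (x_m \vee y_m) + (x_m \wedge y_m) = \sum_{i=1}^{2m} z_i.
\]
My plan here is to unfold each $z_i$ and isolate the contributions of $x_m, y_m$: using distributivity, each $z_i$ can be expressed as $w_i$ joined with meets involving $x_m$ or $y_m$, and the cumulative effect of these contributions is precisely to insert the new pair $(x_m \vee y_m, x_m \wedge y_m)$ into the decreasing sequence $(w_i)$ in the correct positions.

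The cleanest way to organize the final comparison may be to view the right-hand side as the sorted merge of the concatenation $(x_1,\ldots,x_m,y_1,\ldots,y_m)$: iteratively applying the sum-preserving swap $(a,b) \mapsto (a \vee b, a \wedge b)$ to out-of-order adjacent pairs transforms the concatenated sum into $\sum_i z_i$ via the defining identity, and $\bigvee_{j+k=i}(x_j \wedge y_k)$ is the classical merge-sort formula for the $i$-th term of the sorted merge of two decreasing sequences in a distributive lattice. The main obstacle is the bookkeeping of the two sets of conventions (length-$m$ for $z_i$ versus length-$(m-1)$ for $w_i$) at the boundary indices, together with verifying that the $z_i$ are indeed decreasing; once the decreasing property is established by combining the monotonicity of $(x_j), (y_j)$ with distributivity, the sum identity follows from careful but routine applications of $a+b = (a \vee b) + (a \wedge b)$.
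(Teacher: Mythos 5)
Your proposal is correct and takes essentially the same route as the paper: the paper's entire proof is the one-line remark that the identity ``is a generalization of the equality $x+y=(x\vee y)+(x\wedge y)$ and is proven by induction,'' which is exactly your base case and inductive scheme. The additional detail you supply (the reading of $\vee_{j=0}^{m}(x_{j}\wedge y_{i-j})$ as the $i$-th term of the sorted merge of the two decreasing sequences) only fleshes out the bookkeeping the paper leaves implicit.
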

\begin{proof}
 This is a generalization of the equality $x+y=(x\vee y) + (x\wedge y)$ and is proven by induction.
\end{proof}

\begin{rmk}\label{OFS}
 Given any finite sum $x_{1}+\cdots +x_{n}$, we can apply \autoref{std_formula} iteratively (first to $x_{1}+x_{2}$, then to $((x_{1}\vee x_{2}) +(x_{1}\wedge x_{2}))+x_{3}$, etc.) to obtain an ordered sum.
 
 Therefore, every sum in a distributively lattice ordered semigroup can be written as an ordered sum.
\end{rmk}

\begin{dfn}
 Let $S$ be a \CuSgp{} and let $H$ be a subset of $S$. We say that $H$ is \emph{topological} if, given two finite increasing sequences $(x_{i})_{i=1}^{m}, (y_{i})_{i=1}^{m}$ in $H$, we have 
 \begin{equation*}
  \sum_{i=1}^{m} x_{i}\leq \sum_{i=1}^{m} y_{i}
 \end{equation*}
 if and only if $x_{i}\leq y_{i}$ for every $i$.
\end{dfn}

Given any element $r$ in partially ordered set $P$,   we denote by $\downarrow r$ the set $\{s\in P\mid s\leq r\}$; see, for example, \cite[Definition~O-1.3]{GieHof+03Domains}.

\begin{ntn}
 Given a \CuSgp{} $S$ and an element $y\in S$, we write $\infty y:=\sup_n ny$. Further, if $S$ has a greatest element, we denote it by $\infty$.
\end{ntn}

\begin{dfn}\label{dfn:LscLikeCuSgp}
 A \CuSgp{} $S$ will be said to be \emph{$Lsc$-like} if it is a complete distributively lattice ordered \CuSgp{} such that the following conditions hold:
 \begin{enumerate}[(C1)]
  \item For every pair of idempotent elements $y,z$ in $S$, $y\geq z$ if and only if
  \[
   \{ x<\infty \mid x \text{ maximal idempotent},\, 
   x\geq y
   \}
   \subseteq 
   \{ x<\infty \mid x \text{ maximal idempotent},\, 
   x\geq z
   \}.
  \]
  \item There exists a topological subset of the form $\downarrow e$ such that the finite sums of elements in $\downarrow e$ are sup-dense in $S$.
\end{enumerate}
\end{dfn}

The following example justifies our terminology.
 
 \begin{exa}\label{exa_Lsc_is_Lsc}
  Any \CuSgp{} of the form  $\Lsc (X,\NNbar )$ with $X$ a $T_{1}$ space is $\Lsc$-like. Indeed, $\Lsc (X,\NNbar )$ is clearly a complete distributively lattice ordered semigroup.
  
  Further, the maximal idempotent elements $s<\infty$ are of the form $s=\infty \chi_{X\setminus\{ x \}}$, and given any pair of elements $\infty f=\infty \chi_{\text{supp}(f)}$ and $\infty g=\infty \chi_{\text{supp}(g)}$, we know that $\infty \chi_{\text{supp}(f)}\leq \infty\chi_{\text{supp}(g)}$ if and only if $\text{supp}(f)\subset \text{supp}(g)$. That is, if and only if for every $x\in X$ such that $\text{supp}(g)\subset X\setminus\{ x\}$ we have $\text{supp}(f)\subset X\setminus\{ x\}$. This shows that $\Lsc (X,\NNbar )$ satisfies (C1).
  
  To see (C2), consider the subset $\downarrow 1$. One can see that the order is topological, and it clearly generates a semigroup that is dense in $\Lsc (X,\NNbar )$.
 \end{exa}

Recall that by an \emph{order unit} we mean an element $e$ such that $x\leq \infty e$ for every $x\in S$.

\begin{rmk}
 Let $S$ be an $\Lsc$-like \CuSgp{}. Since the semigroup generated by $\downarrow e$ is dense in $S$, $e$ is an order unit.
 
 Furthermore, let $f$ be another order unit, and let $(f_{m})_{m}$ be an increasing sequence of ordered finite sums of elements below $e$ such that $f=\sup_{m} f_{m}$ (the sums can be assumed to be ordered by \autoref{OFS}). Write $f_{m}=\sum_{i=1}^{r_{m}}g_{i,m}$ with $e\geq g_{i,m}\geq g_{i+1,m}$ for each $i< r_{m}$.

Also consider an $\ll$-increasing $(e_{n})_{n}$ whose supremum is $e$.

Then, since $e\leq \infty f$, one has that for every $n\in\mathbb{N}$ there exist some $k,m\in\mathbb{N}$ with $e_{n}\leq kf_{m}=\sum_{i=1}^{r_{m}} kg_{i,m}$. Thus, since the order in $\downarrow e$ is topological and $\sum_{i=1}^{r_{m}} kg_{i,m}$ is an ordered sum of elements below $e$ (the first $k$ greatest elements are $g_{1,m}$, the next $k$ are $g_{2,m}$, etc.), it follows that $e_{n}\leq g_{1,m}\leq f$.

This shows $e\leq f$  and, consequently, $e$ is the least order unit of $S$.
\end{rmk}

\begin{pgr}
 Note that an $\Lsc$-like \CuSgp{} $S$ has no maximal idempotent elements $x<\infty$ if and only if $S= \{ 0\}$. Indeed, if $S$ has no maximal idempotents then every idempotent $z$ satisfies $z\leq 0$ by condition (C1). This implies that every idempotent element is zero and, since for every $s\in S$ the element $\infty s$ is idempotent and $\infty s\geq s$, we get $s=0$ for every $s$, as desired. The converse is trivial.
 
 Similarly, an element $s\in S$ satisfies $\infty s=\infty$ if and only if there are no maximal idempotents $x<\infty$ with $\infty s\leq x$.
\end{pgr}

\begin{lma}\label{lma:decomp1}
 Let $S$ be an $\Lsc$-like \CuSgp{} and let $y\leq ne$, where $e$ is the least order unit of $S$ and $n\in\NN$. Then, $y$ can be written as an ordered sum of at most $n$ non-zero terms in $\downarrow e$.
\end{lma}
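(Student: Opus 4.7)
The plan is to approximate $y$ by ordered finite sums in $\downarrow e$ via (C2) and \autoref{OFS}, then exploit the topological property of $\downarrow e$ twice: first to cap the number of nonzero summands in each approximant at $n$, and then to upgrade the approximation to componentwise increasing sequences, after which (O4) assembles a decomposition of $y$ into at most $n$ ordered terms in $\downarrow e$.

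In more detail, by (C2) combined with (O2) and \autoref{OFS}, write $y=\sup_m y_m$ with each $y_m$ an ordered sum $y_m=g_{1,m}+\cdots+g_{r_m,m}$ satisfying $e\geq g_{1,m}\geq\cdots\geq g_{r_m,m}$ in $\downarrow e$. Padding with trailing zeros (which lie in $\downarrow e$), I may assume $r_m\geq n$ for every $m$. Since $y_m\leq y\leq ne$ and $+$ is commutative, I can display $ne$ as an ordered sum of $r_m$ terms, with $n$ copies of $e$ followed by $r_m-n$ zeros. Reversing both sides to increasing sums of the same length and invoking the topological property of $\downarrow e$, I obtain $g_{j,m}\leq 0$ for every $j>n$. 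Hence each $y_m=g_{1,m}+\cdots+g_{n,m}$ is a decreasingly ordered sum of $n$ terms in $\downarrow e$.

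Next, applying the same reverse-to-increasing trick to the inequality $y_m\leq y_{m+1}$ and once more invoking the topological property, I conclude that $g_{j,m}\leq g_{j,m+1}$ for each fixed $j\in\{1,\ldots,n\}$. Set $y_j:=\sup_m g_{j,m}$; this supremum exists by (O1), and $y_j\in\downarrow e$ because $g_{j,m}\leq e$ for every $m$. The column-wise inequalities $g_{j,m}\geq g_{j+1,m}$ pass to suprema, so $y_1\geq y_2\geq\cdots\geq y_n$. Iterating (O4),
\[
y \;=\; \sup_m(g_{1,m}+\cdots+g_{n,m}) \;=\; y_1+\cdots+y_n,
\]
and discarding the $y_j$ equal to $0$, which by the decreasing order must occupy the tail, yields an ordered sum of at most $n$ nonzero elements of $\downarrow e$.

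The main obstacle is the careful bookkeeping required to apply the topological hypothesis: both sides of each inequality must be presented as ordered sums of the \emph{same} length in $\downarrow e$. This is why the padding-with-zeros and reverse-to-increasing manoeuvres, legitimate because $+$ is commutative and $0\in\downarrow e$, are indispensable. Once that is handled, the essential content is the upgrade from ``each approximant has at most $n$ nonzero terms'' to ``each column $(g_{j,m})_m$ is itself increasing in $m$,'' after which (O4) finishes the argument.
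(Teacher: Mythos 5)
Your proof is correct and follows essentially the same route as the paper: approximate $y$ by ordered finite sums from $\downarrow e$, use the topological property against $ne$ to cap the number of nonzero summands at $n$, use it again on $y_m\leq y_{m+1}$ to make each column increasing, and finish with column-wise suprema and \axiomO{4}. The padding-with-zeros and reversal steps you spell out are exactly the bookkeeping the paper leaves implicit.
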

\begin{proof}
 Write $y$ as $y=\sup_{r} y_{r}$, where each $y_{r}$ is a finite ordered sum of elements in $\downarrow e$; see \autoref{std_formula} and \autoref{OFS}.
 
 As $y_{r}\leq ne$ for each $r$ and $\downarrow e$ is topological, each $y_{r}$ has at most $n$ non-zero summands.
 
 Further, since $y_{r}\leq y_{r+1}$, the $k$-th summands of $(y_{r})_{r}$ form an increasing sequence for every $k$.
 
 Taking their suprema, this shows that $y$ is an ordered sum of at most $n$ elements in $\downarrow e$.
\end{proof}

\begin{cor}\label{Sum_cup}
 Let $S$ be an $\Lsc$-like \CuSgp{}. Given $y_{1},\cdots ,y_{n}\in S$ such that $y_{1}+\cdots +y_{n}\geq s$ for some $s\leq e$, we have $y_{1}\vee \cdots \vee y_{n}\geq s$.
\end{cor}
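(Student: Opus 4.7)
The plan is to approximate $s$ from below by an $\ll$-increasing sequence in $\downarrow e$, dominate each approximation by a single finite ordered sum of elements of $\downarrow e$, and then extract its largest summand using the topological structure of $\downarrow e$.

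By \axiomO{2}, fix an $\ll$-increasing sequence $(s_k)_k$ with $\sup_k s_k = s$; since $s \leq e$, each $s_k \leq e$. It will suffice to prove $s_k \leq y_1 \vee \cdots \vee y_n$ for every $k$ and then take suprema. For each $i$, sup-density in (C2), combined with \axiomO{2} and \autoref{OFS}, provides an $\ll$-increasing sequence $(y_i^{(r)})_r$ of finite \emph{decreasing} ordered sums of elements in $\downarrow e$ with $y_i = \sup_r y_i^{(r)}$. Iterating \axiomO{4}, one has $y_1 + \cdots + y_n = \sup_r (y_1^{(r)} + \cdots + y_n^{(r)})$, so since $s_k \ll y_1 + \cdots + y_n$ there exists an index $r$ with $s_k \leq y_1^{(r)} + \cdots + y_n^{(r)}$. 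Applying \autoref{std_formula} iteratively (as in \autoref{OFS}), rewrite the right-hand side as a single decreasing ordered sum $t_1 + \cdots + t_N$ with each $t_j \leq e$. Padding $s_k$ with $N - 1$ zeros in front and reversing the $t_j$'s, both sides become finite increasing sums in the topological subset $\downarrow e$, and the topological property of $\downarrow e$ yields $s_k \leq t_1$.

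The last and most delicate task is to show $t_1 \leq y_1 \vee \cdots \vee y_n$. For this, I would establish by induction that whenever two decreasing ordered sums in $\downarrow e$ are combined via \autoref{std_formula}, the first summand of the resulting ordered sum equals the $\vee$ of the first summands of the two originals; this is a direct computation of the $i=1$ term of that formula, since the meets $x_j \wedge y_{1-j}$ collapse to the individual maxima by the boundary conventions. Iterating across the $n$ summands, $t_1 = \bigvee_{i=1}^n \bigl(\text{first summand of } y_i^{(r)}\bigr) \leq \bigvee_{i=1}^n y_i^{(r)} \leq y_1 \vee \cdots \vee y_n$, so $s_k \leq y_1 \vee \cdots \vee y_n$; taking sup over $k$ concludes the proof. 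The main obstacle is precisely this combinatorial identification of $t_1$, since the formula in \autoref{std_formula} has an intricate boundary behavior; every other step is a routine unwinding of sup-density, the way-below relation, and the topological property.
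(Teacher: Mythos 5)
Your proof is correct and follows essentially the same route as the paper: both approximate $s$ from below, replace the $y_i$ by finite decreasing ordered sums of elements in $\downarrow e$, combine them via \autoref{std_formula} so that the greatest summand is the join of the greatest summands, and then invoke the topological property of $\downarrow e$. The only cosmetic difference is that you obtain the ordered-sum approximants directly from (C2) and \autoref{OFS}, whereas the paper first picks $y_i'\ll y_i$ with $y_1'+\cdots+y_n'\geq s'$ and decomposes them using \autoref{lma:decomp1}.
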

\begin{proof}
 Take any $s'\ll s$ and let $y'_{i}\ll y_{i}$ be such that $y'_{1}+\cdots +y'_{n}\geq s'$. Since $\infty e\geq y_{1}+\cdots + y_{n}\gg y'_{1}+\cdots +y'_{n}$, there exists some $k\in\NN$ such that $ke\geq y'_{1}+\cdots +y'_{n}$.

By \autoref{lma:decomp1} above, each $y'_{i}$ can be written as an ordered sum of at most $k$ elements below $e$. That is, we can write $y'_{i}=\sum_{j=1}^{k} y_{j,i}$ with $e\geq y_{j,i}\geq y_{j+1,i}$ for each $j<k$.

Applying \autoref{std_formula} iteratively, we have that $y'_{1}+\cdots +y'_{n}$ can be written as a finite ordered sum of elements below $e$, and that the greatest summand of the sum is $y_{1,1}\vee\cdots \vee y_{1,n}$.

Since the order in $\downarrow e$ is topological, we get $y_{1,1}\vee\cdots \vee y_{1,n}\geq s'$. It follows that 
\[
 s'\leq y_{1,1}\vee\cdots \vee y_{1,n}\leq y'_{1}\vee\cdots\vee y'_{n}\leq y_{1}\vee\cdots \vee y_{n}.
\]

Since this holds for every $s'\ll s$, we have $s\leq y_{1}\vee\cdots \vee y_{n}$ as required.
\end{proof}

The next two lemmas will be of particular importance when working with the induced topology of an $\Lsc$-like \CuSgp{}; see \autoref{pgr:TopSpaCuSgp}. In particular, \autoref{Comparatibility} below gives an alternative version of (C1) in \autoref{dfn:LscLikeCuSgp}.

\begin{lma}\label{lma:IntInf}
 Let $S$ be an $\Lsc$-like \CuSgp{} with least order unit $e$, and let $x\in S$. Then, $\infty x=\infty (x\wedge e)$ and $(\infty x)\wedge e=x\wedge e$.
 
 In particular, if $x<\infty$ is a maximal idempotent, we  must have $x\wedge e\neq e$.
\end{lma}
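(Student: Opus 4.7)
The plan is to prove the two identities separately and then derive the final consequence from the first one. Throughout, I would rely on \autoref{lma:decomp1} and \autoref{Sum_cup}, which are the two tools that translate between arbitrary elements and ordered sums of elements in the topological subset $\downarrow e$.

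For the identity $\infty x=\infty(x\wedge e)$, the inequality $\infty(x\wedge e)\leq\infty x$ is immediate from $x\wedge e\leq x$. For the reverse, I would fix $x'\ll x$; since $e$ is an order unit, $x\leq\infty e=\sup_n ne$, so the way-below relation yields $n\in\NN$ with $x'\leq ne$. By \autoref{lma:decomp1}, $x'$ admits a decomposition $x'=\sum_{i=1}^{n}x_i$ with $e\geq x_1\geq\cdots\geq x_n$. The largest summand satisfies $x_1\leq x'\leq x$ and $x_1\leq e$, hence $x_1\leq x\wedge e$. Since the sum is ordered, $x'\leq nx_1\leq\infty(x\wedge e)$. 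Supping over $x'\ll x$ via axiom \axiomO{2} gives $x\leq\infty(x\wedge e)$, and hence $\infty x\leq\infty(x\wedge e)$.

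For the identity $(\infty x)\wedge e=x\wedge e$, the inequality $x\wedge e\leq(\infty x)\wedge e$ is immediate. For the reverse, set $s=(\infty x)\wedge e$ and pick $s'\ll s$. Then $s'\ll\infty x=\sup_n nx$, so $s'\leq nx$ for some $n$. Applying \autoref{Sum_cup} with $y_1=\cdots=y_n=x$ (legitimate since $s'\leq s\leq e$) yields $s'\leq x\vee\cdots\vee x=x$. Combined with $s'\leq e$ this gives $s'\leq x\wedge e$, and supping over $s'\ll s$ finishes this part.

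For the final assertion, note first that $\infty e=\infty$ (the greatest element of $S$, which exists by completeness): every $s\in S$ satisfies $s\leq\infty e$ since $e$ is an order unit. Now if $x<\infty$ is a maximal idempotent and $x\wedge e=e$, then by the first identity $\infty x=\infty(x\wedge e)=\infty e=\infty$; but $x$ being idempotent forces $\infty x=x$, so $x=\infty$, a contradiction. I do not anticipate a serious obstacle here: everything flows from the decomposition lemma and the topological character of $\downarrow e$, and the only subtlety is making sure the hypothesis $s'\leq e$ is available when invoking \autoref{Sum_cup}, which is guaranteed because we take infima with $e$ before extracting way-below approximations.
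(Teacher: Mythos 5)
Your proof is correct. For the identity $\infty x=\infty(x\wedge e)$ you follow essentially the same idea as the paper: extract the greatest summand of a decomposition supplied by \autoref{lma:decomp1} and observe that it bounds, and is bounded by, $x\wedge e$. The only difference is that you localize to way-below approximants $x'\ll x$ (using $x'\leq ne$ to invoke the decomposition lemma), whereas the paper writes $x$ itself as a supremum of ordered sums and takes the supremum of the top summands to produce a single element $x'$ with $x\wedge e=x'$. Where you genuinely diverge is the second identity: the paper deduces $(\infty x)\wedge e=x\wedge e$ from the asserted, but not detailed, equality $(\infty x')\wedge e=x'$ for that element $x'$, whereas you give a self-contained argument by choosing $s'\ll(\infty x)\wedge e$, bounding $s'\leq nx$, and invoking \autoref{Sum_cup} to collapse the sum $x+\cdots+x$ to $x$, which is legitimate precisely because $s'\leq e$. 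Your route has the advantage of decoupling the second identity from the bookkeeping of the first, and you also spell out the ``in particular'' clause (idempotence forces $\infty x=x$, and $\infty e=\infty$ because $e$ is an order unit), which the paper leaves implicit. Both arguments ultimately rest on the same two pillars: \autoref{lma:decomp1} and the topological order on $\downarrow e$.
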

\begin{proof}
 By \autoref{lma:decomp1} and condition (C2) in \autoref{dfn:LscLikeCuSgp}, $x$ can be written as the supremum of finite ordered sums of elements in $\downarrow e$. Moreover, using that the order in $\downarrow e$ is topological, it follows that the sequence formed by the greatest element of each ordered sum is increasing. Let $x'$ be the supremum of such a sequence.
 
 Then, it is clear that $\infty x=\infty x'$ and $x\wedge e=x'$. Therefore, we have 
 \[
  \infty x=\infty x' = \infty (x\wedge e),
  \andSep
  (\infty x)\wedge e = (\infty x')\wedge e = x'=x\wedge e,
 \]
 as desired.
\end{proof}

\begin{lma}\label{Comparatibility}
 Let $S$ be an $\Lsc$-like \CuSgp{} with least order unit $e$. Let $y,z\leq e$. If $z\leq x$ for every maximal element $x<e$ such that $y\leq x$, then $z\leq y$.
\end{lma}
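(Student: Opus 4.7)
The plan is to reduce the statement to condition (C1) of \autoref{dfn:LscLikeCuSgp}, applied to the idempotents $\infty y$ and $\infty z$. To make this reduction, the key intermediate step is to establish a bijective correspondence between maximal elements $w<e$ in $\downarrow e$ and maximal idempotents $u<\infty$ in $S$, given by $w\mapsto \infty w$ and $u\mapsto u\wedge e$.

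First I would verify this correspondence using \autoref{lma:IntInf}. If $u<\infty$ is a maximal idempotent, then $u\wedge e\neq e$ by \autoref{lma:IntInf}; for any $w'$ with $u\wedge e\leq w'<e$, the element $\infty w'$ is idempotent, sits strictly below $\infty$ (else meeting with $e$ and applying \autoref{lma:IntInf} would give $w'=\infty \wedge e=e$, contradiction), and dominates $u=\infty u=\infty(u\wedge e)$, so maximality of $u$ forces $\infty w'=u$, and meeting with $e$ yields $w'=u\wedge e$; hence $u\wedge e$ is maximal below $e$. The reverse direction is completely analogous.

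Next I would translate the hypothesis. For any $y\leq e$ and any idempotent $u$, one has $u\geq y$ if and only if $u\geq\infty y$ (as $u$ is idempotent) if and only if $u\wedge e\geq y$ (using $e\geq y$). Thus the assumption ``every maximal $w<e$ with $y\leq w$ satisfies $z\leq w$'' transcribes under the bijection into the set inclusion
\[
\{u<\infty : u\text{ maximal idempotent},\, u\geq \infty y\}\subseteq\{u<\infty : u\text{ maximal idempotent},\, u\geq \infty z\}.
\]
Applying (C1) to the idempotents $\infty y$ and $\infty z$ yields $\infty y\geq \infty z$. Meeting both sides with $e$ and invoking \autoref{lma:IntInf} together with $y,z\leq e$, we conclude
\[
y=y\wedge e=(\infty y)\wedge e\geq (\infty z)\wedge e=z\wedge e=z,
\]
as required. (The borderline case where no maximal $w<e$ satisfies $y\leq w$ is handled uniformly: the set inclusion is then vacuous, (C1) still gives $\infty y\geq \infty z$, and one in fact gets $y=e$.)

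The main obstacle is the bijection step: one must carefully argue that each of the two assignments preserves strict inequality (with $\infty$ on one side and $e$ on the other) and that maximality on one side matches maximality on the other. Both checks reduce to the fact that the operations $\infty(\cdot)$ and $(\cdot)\wedge e$ are mutually inverse on these distinguished layers, which is precisely the content of \autoref{lma:IntInf}; once this is in hand, the lemma follows immediately from (C1).
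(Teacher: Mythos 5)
Your proof is correct and follows essentially the same route as the paper: both establish the correspondence between maximal elements below $e$ and maximal idempotents below $\infty$ via $\infty(\cdot)$ and $(\cdot)\wedge e$, translate the hypothesis into the set inclusion of condition (C1), and then meet with $e$ using \autoref{lma:IntInf}. Your verification of the correspondence is in fact slightly cleaner than the paper's (which uses $\ll$-increasing sequences and the topological order for one direction), but the strategy and key ingredients are identical.
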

\begin{proof}
If there are no maximal idempotent elements $x<\infty$ we know that $S=\{ 0\}$, so we may assume otherwise.

We claim that the maximal idempotent elements $x<\infty$ are precisely the elements $\infty s$ with $s<e$ maximal.

To see this, let $x<\infty$ be a maximal idempotent and take $s=x\wedge e$. By  \autoref{lma:IntInf} we have $\infty s= \infty (x\wedge e)=\infty x=x$. Let us check that $s$ is maximal, and thus take $t\in S$ with $s\leq t\leq e$.

By maximality of $x$, we either have $\infty t=\infty $ or $\infty t=x$. Using \autoref{lma:IntInf} once again, it follows that we have $t= (\infty t)\wedge e=\infty\wedge e = e$ or $t=x\wedge e=s$, as required.

 Conversely, let $s<e$ be maximal and consider the element $\infty s=\sup_{n} ns$. Let $(s_k)_k$ be an $\ll$-increasing sequence with supremum $s$.
 
 Given an idempotent element $x$ such that $\infty s\leq x\leq \infty$, we know by \autoref{lma:decomp1} that there exists an $\ll$-increasing sequence $(x_{m})_m$ with supremum $x$ such that each $x_m$ can be written as a finite increasing sum of elements in $\downarrow e$. 
 
 Let $(m_k)_k$ be an increasing sequence of integers such that $ks_{k}\leq x_{m_{k}}$ for every $k$.
 
 Using that $\downarrow e$ is topological, each $s_{k}$ is less than or equal to each of the first $k$ summands of $x_{m_{k}}$. As in the proof of \autoref{lma:IntInf}, note that the greatest summands of each sum form an increasing sequence of elements below $e$. Letting $x'$ be the supremum of this sequence, we get $s\leq x'\leq e$.
 
 By maximality of $s$, we either have $s=x'$ or $x'=e$. Thus, it follows from \autoref{lma:IntInf} that $\infty s= x$ or $x=\infty x'=\infty$. This proves that  $\infty s$ is a maximal idempotent, as desired.
 
 A similar argument shows that for any $y,z\leq e$ we have $y\leq z\leq e$ if and only if $\infty y\leq \infty z$. Consequently, if $z\leq s$ for every maximal element $s<e$ such that $y\leq s$, we know that $\infty z\leq \infty y$. Taking the infimum with $e$, one gets $z=(\infty z)\wedge e\leq (\infty y)\wedge e=y$ as required.
\end{proof}

\begin{rmk}
 Note that the previous lemma implies that $y<e$ if and only if there exists a maximal element $x<e$ with $y\leq x$.
\end{rmk}

Recall that a complete lattice $(P,\leq )$ is said to be a \emph{complete Heyting algebra} if for every element $s\in P$ and every subset $T\subseteq P$, the following holds:
\[
 s\wedge (\vee_{t\in T} t)=\vee_{t\in T}(s\wedge t).
\]

\begin{lma}\label{prp:Heyting}
 Let $S$ be an $\Lsc$-like \CuSgp{} with least order unit $e$. The subset $\downarrow e\subset S$ is a complete Heyting algebra.
\end{lma}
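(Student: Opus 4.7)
The plan is to first observe that $\downarrow e$ is a complete sublattice of $S$: since $e$ is an upper bound of any subset of $\downarrow e$ and arbitrary joins and meets exist in $S$, they can be computed in $S$ and automatically remain below $e$. Completeness is then free; the substantive content is the infinitary distributive law
\[s \wedge \bigvee_{t \in T} t = \bigvee_{t \in T}(s \wedge t), \qquad s \in \downarrow e,\ T \subseteq \downarrow e,\]
which characterises complete Heyting algebras.

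The inequality $\geq$ is trivial. For the reverse inequality, I would deliberately avoid trying to approximate $u := \vee T$ by an increasing sequence, since the join of a non-directed or uncountable family in a $\Cu$-semigroup need not be accessible that way. Instead, my plan is to invoke \autoref{Comparatibility} and reduce the problem to coatoms: it suffices to show that whenever $m < e$ is maximal and $\bigvee_{t \in T}(s \wedge t) \leq m$, one has $s \wedge u \leq m$.

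Fixing such an $m$, I would split into two cases. If $s \leq m$, then $s \wedge u \leq s \leq m$. Otherwise $s \not\leq m$, so $s \vee m$ lies strictly above $m$ and is bounded by $e$; maximality of $m$ forces $s \vee m = e$. Then for every $t \in T$, finite distributivity (built into the notion of distributively lattice ordered) gives
\[t = t \wedge (s \vee m) = (t \wedge s) \vee (t \wedge m) \leq m \vee m = m,\]
since $s \wedge t \leq m$ by hypothesis and $t \wedge m \leq m$. Taking the join over $t \in T$ yields $u \leq m$, hence $s \wedge u \leq m$, and \autoref{Comparatibility} delivers the desired inequality.

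I expect the main conceptual step to be recognising that \autoref{Comparatibility} is the right tool: it translates the infinitary distributive law into a finitary statement about the coatoms of $\downarrow e$, playing the role that verification on points plays for the lattice of open sets of a $T_1$-space. Once this translation is in place, the rest is a one-line application of finite distributivity, and no delicate manoeuvring with the way-below relation is needed.
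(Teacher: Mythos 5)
Your proof is correct and follows essentially the same route as the paper: both reduce the infinitary distributive law to a statement about maximal elements below $e$ via \autoref{Comparatibility}, then split on whether $s\leq m$ or $s\vee m=e$ and finish with finite distributivity (the paper distributes $x\vee(s\wedge t)$ where you distribute $t\wedge(s\vee m)$, a purely cosmetic difference). No gaps.
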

\begin{proof}
 By definition, we have to see that for every subset $T\subset \downarrow e$, one has $s\wedge (\vee_{t\in T} t)=\vee_{t\in T}(s\wedge t)$ for every $s\in \downarrow e$. Thus, let $x<e$ be maximal with $x\geq \vee_{t\in T}(s\wedge t)$, which happens if and only if $x\geq s\wedge t$ for every $t\in T$. Since $x\neq e$ and $x\vee x\geq x\vee (s\wedge t)=(x\vee s)\wedge (x\vee t)$, we either have $x\vee s=x$ or $x\vee t=x$ (since otherwise both of these unions would be equal to $e$ and then $x\geq e$, a contradiction).

If $x= x\vee s \geq s$, we have $x\geq s\wedge (\vee_{t\in T} t)$. Else, $x\geq t$ for each $t\in T$, so we also get $x\geq s\wedge (\vee_{t\in T} t)$.

By \autoref{Comparatibility}, this shows that $s\wedge (\vee_{t\in T} t)\leq \vee_{t\in T}(s\wedge t)$. 

The other inequality holds in any lattice.
\end{proof}

Given a \CuSgp{} $S$ where every pair of elements has a supremum, we say that $S$ is \emph{sup-semilattice ordered} if for every $x,y,z\in S$ we have $x+(y\vee z)=(x+y)\vee (x+z)$.

A natural question to ask about $\Lsc$-like \CuSgp{s} is if they satisfy such a property. This is indeed the case:

\begin{lma}\label{lma:supsemi}
 Let $S$ be an $\Lsc$-like \CuSgp{} $S$ and let $e$ be its least order unit. Given $y\leq z$ with $y\leq ne$ for some $n$, there exists an element $y\setminus z\in S$ such that, for $x\in S$, we have $x+y\leq z$ if and only if $x\leq y\setminus z$. 
 
 In particular, given $x,y,z\in S$, we have $x+ (y\vee z)=(x+y)\vee (x+z)$.
\end{lma}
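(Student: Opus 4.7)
The plan is to construct $y\setminus z$ as the supremum of $T := \{x\in S : x+y\leq z\}$ in the complete lattice $S$ and then verify that it still lies in $T$; the distributivity identity will follow via an approximation argument using the universal property. First I would reduce to the case $y\in\downarrow e$ by writing $y=y_{1}+\cdots+y_{n}$ as an ordered sum in $\downarrow e$ via \autoref{lma:decomp1}. Once the case $y_{1}\leq e$ is settled, the equivalence $x+y\leq z\Leftrightarrow x+(y_{2}+\cdots+y_{n})\leq y_{1}\setminus z$ iterates to give $y\setminus z = y_{n}\setminus(\cdots(y_{1}\setminus z)\cdots)$, with the chain of inequalities $y_{1}+\cdots+y_{i}\leq y\leq z$ ensuring that each intermediate residual is well-defined.

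For $y\leq e$, the construction is explicit. For each finite ordered sum $w=w_{1}+\cdots+w_{m}$ in $\downarrow e$ with $w\leq z$, set
\[
\phi(w) := \bigl(w_{1}\wedge(y\to w_{2})\bigr) + \bigl(w_{2}\wedge(y\to w_{3})\bigr) + \cdots + \bigl(w_{m-1}\wedge(y\to w_{m})\bigr) + (w_{m}\wedge\neg y),
\]
where $y\to(-)$ and $\neg y:=y\to 0$ are the Heyting operations in the complete Heyting algebra $\downarrow e$ given by \autoref{prp:Heyting}. Iteratively applying \autoref{std_formula} to reorder $\phi(w)+y$ as an ordered sum of length $m+1$ in $\downarrow e$ and bounding each of its summands via the topological property from (C2) (using $y\wedge(y\to w_{i+1})\leq w_{i+1}$ and $y\wedge\neg y = 0$) would yield $\phi(w)+y\leq z$ whenever $w\leq z$ and $y\leq z$. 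Setting $y\setminus z := \sup_{w}\phi(w)$, directedness of the family $\{\phi(w)\}$ (since $w\leq w'$ implies $\phi(w)\leq\phi(w')$) combined with \axiomO{4} yields $(y\setminus z)+y\leq z$. Conversely, given any $x$ with $x+y\leq z$, \axiomO{2} together with \autoref{lma:decomp1} approximates $x$ by $\ll$-smaller ordered sums, each of which fits inside some $\phi(w)$ by the topological property and \autoref{Sum_cup}, giving $x\leq y\setminus z$.

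For the distributivity $x+(y\vee z)=(x+y)\vee(x+z)$, the inequality $\geq$ follows from monotonicity. For the reverse, I would write $y=\sup_{i}y_{i}$ and $z=\sup_{j}z_{j}$ as directed suprema of finite ordered sums in $\downarrow e$ via (C2) — so each $y_{i}, z_{j}$ is bounded by a multiple of $e$ — and reduce via \axiomO{4} to the case of bounded $y, z$. Setting $W:=(x+y)\vee(x+z)$, both $y\setminus W$ and $z\setminus W$ exist by the first part, and the universal property gives $x\leq(y\setminus W)\wedge(z\setminus W)$. The final step is to verify $(y\setminus W)\wedge(z\setminus W)\leq(y\vee z)\setminus W$ using the explicit Heyting-type formula for $\phi(w)$ and the distributivity of the lattice $\downarrow e$, which yields $x+(y\vee z)\leq W$ as required.

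The main obstacle is this last inequality: it is essentially the distributivity we are proving, so it cannot come from the universal property alone and must be established by direct computation with the explicit formula, exploiting the fact that the Heyting operations on $\downarrow e$ genuinely distribute over joins. A secondary difficulty is the verification of $\phi(w)+y\leq z$, which requires careful repeated application of \autoref{std_formula} to express $\phi(w)+y$ as an ordered sum, together with case analysis using the Heyting identities and the topological property of $\downarrow e$.
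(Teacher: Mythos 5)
There are two genuine gaps here, one in each half of the lemma.

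The more serious one is in the distributivity part. You residuate by $y$ and by $z$, obtaining $x\leq (y\setminus W)\wedge(z\setminus W)$, and then need $(y\setminus W)\wedge(z\setminus W)\leq (y\vee z)\setminus W$ --- which, as you yourself observe, is equivalent to the identity being proved, and which you defer to an unexecuted ``direct computation with the explicit formula.'' The paper's proof avoids this circularity entirely by residuating on the other side: fix $x'\ll x$ with $x'\leq ne$, form $x'\setminus s$, and note that $x'+(y\vee z)\leq s$ iff $y\vee z\leq x'\setminus s$ iff $y\leq x'\setminus s$ and $z\leq x'\setminus s$ iff $x'+y\leq s$ and $x'+z\leq s$. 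The middle equivalence is just the definition of a join --- no distributivity of the lattice is used --- so $x'+(y\vee z)$ and $(x'+y)\vee(x'+z)$ have the same upper bounds and are equal; one then passes to the supremum over $x'\ll x$. In adjunction language: $w\mapsto x'+w$ is a lower adjoint, hence preserves joins. Your setup puts the adjunction in the wrong variable, and the missing inequality cannot be recovered from the universal property.

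The second gap is in the construction of $y\setminus z$ for $y\leq e$ and \emph{unbounded} $z$. You set $y\setminus z:=\sup_w\phi(w)$ over the directed family of finite ordered sums $w\leq z$ and invoke \axiomO{4} to get $(y\setminus z)+y\leq z$; but \axiomO{4} only says that addition commutes with suprema of increasing \emph{sequences}, not of arbitrary directed families, and $u\ll\sup_w\phi(w)$ (with $\ll$ defined via sequences) does not force $u\leq\phi(w)$ for some $w$. The paper sidesteps this by a three-stage bootstrap: first $y\leq z\leq e$, where any $x$ with $x+y\leq z$ satisfies $x\wedge y=0$ and hence $x+y=x\vee y$, so the arbitrary supremum $\vee\{x\mid x+y\leq z\}$ is controlled purely by the complete Heyting algebra structure of $\downarrow e$ from \autoref{prp:Heyting}; then $z\leq ne$ via termwise residuals of the ordered decompositions (\autoref{lma:decomp1} and \autoref{std_formula}); and finally general $z$ via the \emph{countable increasing} sequence $y\setminus(z\wedge me)$, where \axiomO{4} legitimately applies and the converse direction is checked on $x'\ll x$, which is automatically bounded by some $ke$. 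Your intermediate claim $\phi(w)+y\leq z$ is also only asserted (``would yield''), though that one looks fixable; the directed-supremum issue and the circular distributivity step are the points that need a genuinely different argument.
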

\begin{proof}
We will construct our \textquoteleft almost-complement\textquoteright{ } $y\setminus z$ in three steps:
\vspace{0.1cm}

\noindent \emph{Step 1.} Let $e$ be the least order unit of $S$ and assume that $y\leq z\leq e$. Then, consider the subset $T:=\{ x\in S\mid y+x\leq z \}$ and, since arbitrary suprema exist in $S$, we define
 \[
  y\setminus z :=\vee \{ x\in S\mid y+x\leq z \} = \vee_{x\in T}x.
 \]
 
Further, note that, for $x\in T$, $y+x=(y\vee x)+(y\wedge x)$ and, as $z\leq e$ and $\downarrow e$ is topological, we have $y\vee x\leq z$ and $y\wedge x=0$.

Using that $\downarrow e$ is a complete Heyting algebra, we get $(\vee_{x\in T}x)\vee y=\vee_{x\in T} (x\vee y)\leq z$ and $(\vee_{x\in T}x)\wedge y=\vee_{x\in T}(x\wedge y)=0$. Consequently, we get
\[
 y+(y\setminus z)=y\vee (y\setminus z)+y\wedge (y\setminus z)=y\vee (y\setminus z)\leq z.
\]

This shows that $x\leq y\setminus z$ if and only if $y+x\leq z$.
\vspace{0.1cm}

\noindent \emph{Step 2.} Now, given any pair of elements $y\leq z\leq ne$ for some $n\in\mathbb{N}$, write them as finite ordered sums of elements in $\downarrow e$, say $y=\sum_{i=1}^{n} y_{i}$ and $z=\sum_{j=1}^{n} z_{j}$.

Since $\downarrow e$ is topological, $y_{i}\leq z_{i}\leq e$ for every $i$, so we can define $y\setminus z:=\sum_{i=1}^{n} y_{i}\setminus z_{i}$.

Note that, given any element $x$ such that $y+x\leq z\leq ne$, we have $x\leq ne$ and so, by \autoref{lma:decomp1}, $x$ can be written as a finite ordered sum $\sum_{i=1}^{n} x_{i}$ with $x_i\leq e$ for every $i$. Moreover, we also have
\[
 \sum_{i=1}^{n} y_{i}\vee x_{i}\leq y+x\leq \sum_{i=1}^{n} z_{i}
\]
and so $y_{i}\vee x_{i}\leq z_{i}$, which happens if and only if $x_{i}\leq y_{i}\setminus z_{i}$.

This shows that $x+y\leq z\leq ne$ for some $n$ if and only if $x\leq y\setminus z$.
\vspace{0.1cm}

\noindent \emph{Step 3.} Given $y\leq z\wedge ne$ for some $n$, we have that $y\leq z\wedge me$ for every $m\geq n$.

Thus, we can consider the element $y\setminus (z\wedge me)$. Further, it is easy to check that $y\setminus (z\wedge me)\leq y\setminus (z\wedge (m+1)e)$ for every $m$. We define $y\setminus z:=\sup_{m} y\setminus(z\wedge me)$, which has the required property.
\vspace{0.1cm}

Finally, to see that $x+ (y\vee z)=(x+y)\vee (x+z)$ for any given $x,y,z$, note that \textquoteleft{}$\geq $\textquoteright{ } is clear. To prove \textquoteleft{}$\leq $\textquoteright, let $x'\ll x$ and let $s$ be such that $x'+(y\vee z)\leq s$. 

Since $x'\ll x\leq \infty e$, there exists some $n$ with $x'\leq ne$, so we can consider the element $x'\setminus s$.

Thus, we know that $x'+(y\vee z)\leq s$ holds if and only if $y\vee z\leq x'\setminus s$, which in turn holds if and only if $x'+y,x'+z\leq s$. Consequently, we get
\[
 x'+(y\vee z)=(x'+y) \vee (x'+z).
\]

Since the equality holds for every $x'\ll x$, it also holds for $x$.
\end{proof}

\begin{lma}\label{pgr:canc}
Let $S$ be an $\Lsc$-like \CuSgp{} with least order unit $e$, and let $x,y,z\leq e$ with $x+y\leq x+z$. Then, $y\leq z$.
\end{lma}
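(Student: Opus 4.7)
The plan is to reduce the cancellation to a standard cancellation in distributive lattices by using that $\downarrow e$ carries a topological order together with the identity $a+b = (a\vee b) + (a\wedge b)$ which is available in any distributively lattice ordered $\Cu$-semigroup.

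First I would rewrite both sides of the hypothesis using the lattice identity: $x+y = (x\vee y) + (x\wedge y)$ and $x+z = (x\vee z) + (x\wedge z)$. Both expressions are ordered sums of two elements of $\downarrow e$ in the sense of \autoref{std_formula} (the larger summands being $x\vee y$ and $x\vee z$, the smaller ones $x\wedge y$ and $x\wedge z$). The hypothesis $x+y\leq x+z$ then reads
\[
 (x\vee y) + (x\wedge y) \leq (x\vee z) + (x\wedge z),
\]
and since the subset $\downarrow e$ is topological by condition (C2) of \autoref{dfn:LscLikeCuSgp}, comparing summand by summand yields the two inequalities $x\vee y \leq x\vee z$ and $x\wedge y \leq x\wedge z$.

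From here I would finish by a standard manipulation in the distributive lattice $\downarrow e$ (which is even a complete Heyting algebra by \autoref{prp:Heyting}). Using absorption and distributivity,
\[
 y = y\wedge (x\vee y) \leq y\wedge (x\vee z) = (y\wedge x)\vee (y\wedge z) \leq (x\wedge z)\vee z = z,
\]
where the second inequality uses $x\wedge y\leq x\wedge z$ and the last equality is absorption. This gives $y\leq z$, as desired.

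I do not expect any real obstacle here: the only non-trivial ingredients are the decomposition $a+b = (a\vee b)+(a\wedge b)$ (which is part of the definition of being distributively lattice ordered), the fact that $\downarrow e$ is topological (so that a sum inequality between ordered two-term sums gives componentwise inequalities), and the distributivity of the lattice $\downarrow e$. The mild point to double-check is that $(x\vee y)+(x\wedge y)$ is genuinely an ordered sum in the sense required by the definition of topological subset, but this is immediate since $x\wedge y\leq x\vee y$ and both lie in $\downarrow e$.
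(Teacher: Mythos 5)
Your proof is correct, and it takes a genuinely different (and in fact more economical) route than the paper's. Both arguments start identically: rewrite $x+y=(x\vee y)+(x\wedge y)$ and $x+z=(x\vee z)+(x\wedge z)$ and apply the topological property of $\downarrow e$ to the resulting ordered two-term sums. The difference is that the paper extracts only the top-component inequality $x\vee y\leq x\vee z$ and then finishes via \autoref{lma:supsemi}: it forms the ordered sum $(z\vee x)+y$, uses the sup-semilattice identity $a+(b\vee c)=(a+b)\vee(a+c)$ twice to bound it by $(z\vee y\vee x)+\bigl(z\wedge(y\vee x)\bigr)$, and applies the topological property a second time to conclude $y\leq z\wedge(y\vee x)\leq z$. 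You instead use the full strength of the componentwise comparison --- the definition of a topological subset gives \emph{both} $x\vee y\leq x\vee z$ and $x\wedge y\leq x\wedge z$ --- and then close the argument by pure lattice algebra (absorption and distributivity) inside $\downarrow e$, which is a distributive sublattice of $S$ since $x,y,z\leq e$ forces all the meets and joins involved to stay below $e$. What your approach buys is independence from \autoref{lma:supsemi} (and hence from the construction of the almost-complements $y\setminus z$), replacing a second invocation of the topological property with an elementary distributive-lattice computation; the paper's version, on the other hand, is the one that extends to the more general cancellation statement mentioned just after the lemma (arbitrary $y,z\in S$ with $x\leq ne$), where the summands need no longer lie in a common sublattice of $\downarrow e$ and the sup-semilattice identity does the work instead. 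Your final chain $y=y\wedge(x\vee y)\leq y\wedge(x\vee z)=(y\wedge x)\vee(y\wedge z)\leq(x\wedge z)\vee z=z$ checks out step by step, and your closing remark correctly identifies the only point needing verification, namely that $(x\vee y)+(x\wedge y)$ and $(x\vee z)+(x\wedge z)$ are ordered sums of elements of $\downarrow e$.
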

\begin{proof}
First, note that $y\leq x\vee y\leq x\vee z$. Indeed, since $x,y,z\leq e$ and we know that $x+y=(x\vee y)+(x\wedge y)$ and $x+z=(x\vee z) +(x\wedge z)$, we have that $(x\vee y)+(x\wedge y)\leq (x\vee z) +(x\wedge z)$.

Since the right and left hand side of the previous inequality are ordered sums of elements below $e$, we can use that the order in $\downarrow e$ is topological to get $x\vee y\leq x\vee z$, as desired.

Therefore, the sum $(z\vee x)+y$ is ordered, as we have $y\leq z\vee x$.

Using \autoref{lma:supsemi} at the first and third step, the inequality $x+y\leq x+z$ at the second step, and that $S$ is distributively lattice ordered at the last step, one obtains
\[
\begin{split}
 (z\vee x) +y &= (y+z)\vee (y+x) \leq (y+z)\vee (z+x) = z+(y\vee x)\\
 &=(z\vee y\vee x) + z\wedge (y\vee x).
 \end{split}
\]

Using once again that the order in $\downarrow e$ is topological, it follows that $y\leq z\wedge (y\vee x)\leq z$, as required.
\end{proof}

By extending the previous proof, one can check that whenever $y,z\in S$ and $x\leq ne$ for some $n$, the same cancellation property holds. In particular, it follows that every $\Lsc$-like \CuSgp{} has weak cancellation.

\begin{rmk}
 Using the previous form of cancellation and the equality
 \[
 \begin{split}
  ((x+y)\vee (x+z)) + (x+(y\wedge z))&=x+y+x+z\\
  &=((x+y)\vee (x+z)) + ((x+y)\wedge (x+z)),
  \end{split}
 \]
one can check that $S$ is inf-semilattice ordered, that is to say, $x+y\wedge z=(x+y)\wedge (x+z)$ for every $x,y,z$; see \cite{AntPerRobThi21:CuntzSR1}. For elements $x,y,z\leq ne$ for some $n$, note that this simply follows by cancelling the term $((x+y)\vee (x+z))$.

Using that every element in $S$ is the supremum of an $\ll$-increasing sequence of finite sums of elements below $e$, one can then prove that the equality $x+(y\wedge z)=(x+y)\wedge (x+z)$ is always satisfied.
\end{rmk}

As one might expect, having a topological order also affects the way below relation. 

\begin{lma} \label{wayb_C5}
 Let $S$ be an $\Lsc$-like \CuSgp{} with least order unit $e$. If $y,z,y',z'\leq e$ are such that
 \[
  y+z\ll y' +z'
 \]
 with $y\geq z$ and $y'\geq z'$, we have that $y\ll y'$ and $z\ll z'$.
 
 The same holds for any pair of finite sums (i.e. with more than two summands)
\end{lma}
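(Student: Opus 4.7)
The plan is to first handle the two-summand case and then indicate how to adapt the argument to arbitrary finite sums. I would take $\ll$-increasing sequences $(y'_k)_k$ and $(z'_k)_k$ in $\downarrow e$ with suprema $y'$ and $z'$ respectively, and replace them by the rearranged pair $\tilde y'_k := y'_k \vee z'_k$ and $\tilde z'_k := y'_k \wedge z'_k$. By the distributive-lattice identity these have the same sum as the original pair, while satisfying $\tilde y'_k \geq \tilde z'_k$ with both terms in $\downarrow e$. Since $(y'_k + z'_k)_k = (\tilde y'_k + \tilde z'_k)_k$ is increasing with supremum $y' + z'$ by \axiomO{3} and \axiomO{4}, and $y + z \ll y' + z'$, one finds $k$ with $y + z \leq \tilde y'_k + \tilde z'_k$. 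As both sides are now decreasing sums of elements in $\downarrow e$, the topological property of $\downarrow e$ gives $y \leq \tilde y'_k$ and $z \leq \tilde z'_k$.

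Next, I would observe that $\vee$ preserves the way-below relation: if $a \ll a'$ and $b \ll b'$, any increasing sequence with supremum above $a' \vee b'$ has some term above both $a$ and $b$. Using this with $z'_k \ll z' \leq y'$, one gets $\tilde y'_k = y'_k \vee z'_k \ll y' \vee y' = y'$; for the other summand, $\tilde z'_k \leq z'_k \ll z'$ is immediate. Combining, $y \leq \tilde y'_k \ll y'$ and $z \leq \tilde z'_k \ll z'$, i.e.\ $y \ll y'$ and $z \ll z'$.

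For the general case of two decreasing finite sums $\sum_{i=1}^n y_i \ll \sum_{i=1}^n y'_i$ with all terms in $\downarrow e$, I would take $\ll$-increasing sequences $(y'_{i,k})_k$ with supremum $y'_i$ for each $i$, and rearrange $\sum_i y'_{i,k}$ into decreasing form $\sum_i \tilde y'_{i,k}$ using iterated applications of \autoref{std_formula} (see \autoref{OFS}). The resulting $i$-th summand takes the explicit lattice-polynomial shape
\[
 \tilde y'_{i,k} \;=\; \bigvee_{|I|=i}\; \bigwedge_{j \in I} y'_{j,k}.
\]
The same way-below-plus-topological argument then produces, for some $k$, the coordinatewise bounds $y_i \leq \tilde y'_{i,k}$. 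To finish, for each subset $I\subseteq\{1,\ldots,n\}$ with $|I|=i$ one has $\bigwedge_{j\in I} y'_{j,k} \leq y'_{\max I, k} \ll y'_{\max I} \leq y'_i$, using that the $y'_j$ are decreasing. Taking the join over such $I$ preserves way-below, so $\tilde y'_{i,k} \ll y'_i$ and hence $y_i \ll y'_i$.

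The main obstacle is the general-$n$ step: the challenge is to carry the rearrangement of the approximating sequences without needing $\wedge$ to preserve $\ll$, which is not obvious in an arbitrary $\Lsc$-like \CuSgp{}. The key trick is that only the easy direction ``a finite meet is dominated by each of its factors'' is used, so one never needs to promote an inequality involving a meet from $\leq$ to $\ll$; this is exactly what makes the lattice polynomials $\bigvee_{|I|=i}\bigwedge_{j\in I} y'_{j,k}$ tractable even though the meets inside them are not a priori $\ll$-controlled.
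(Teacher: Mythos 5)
Your argument is correct, but it takes a different route from the paper's. The paper also starts by approximating $y'$ and $z'$ by $\ll$-increasing sequences and finding $n$ with $y+z\leq y'_n+z'_n$, but instead of symmetrizing the approximants it simply re-indexes: since $z'_n\ll z'\leq y'$, there is $k$ with $z'_n\leq y'_k$, so $y+z\leq y'_{\max\{n,k\}}+z'_n$ is already an inequality between ordered sums in $\downarrow e$, and the topological property yields $y\leq y'_{\max\{n,k\}}\ll y'$ and $z\leq z'_n\ll z'$ with no lattice operations at all. Your version replaces this index shift by the rearrangement $\tilde y'_k=y'_k\vee z'_k$, $\tilde z'_k=y'_k\wedge z'_k$, and then needs the additional (true, and easily verified) observation that finite joins preserve $\ll$ in order to get $\tilde y'_k\ll y'$; the paper never needs this. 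The trade-off shows up in the $n$-summand case, which the paper leaves to the reader: the paper's method generalizes by shifting indices one summand at a time (using $y'_{i+1,n}\ll y'_{i+1}\leq y'_i$), whereas your symmetric lattice polynomials $\bigvee_{|I|=i}\bigwedge_{j\in I}y'_{j,k}$ give a uniform closed-form rearrangement at the cost of having to verify that iterating \autoref{std_formula} really produces that expression and that each such polynomial is way below $y'_i$ — which your estimate $\bigwedge_{j\in I}y'_{j,k}\leq y'_{\max I,k}\ll y'_{\max I}\leq y'_i$ handles correctly, since only upper bounds on meets (never $\ll$-control of meets) are required. Both proofs are valid; the paper's is shorter and more elementary, yours is more systematic.
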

\begin{proof}
 Write $y'=\sup_{n} y'_{n}$ and $z'=\sup_{n} z'_{n}$ with $(y'_{n})_{n}$ and $(z'_{n})_{n}$  $\ll$-increasing. Since $y+z\ll y' +z'$, we have $y+z\leq y'_{n}+z'_{n}$ for some $n$.
 
 Moreover, as we have that $z'_{n}\ll z'\leq y'$, there exists a $k$ such that $z'_{n}\leq y'_{k}$. This implies that
 \[
  y+z\leq  y'_{n}+z'_{n}\leq  y'_{\max\{n,k\} }+z'_{n}
 \]
and, since the order is topological, we obtain $y\leq y'_{\max\{n,k\} }\ll y'$ and $z\leq z'_{n}\ll z'$.
\end{proof}

Even though the following lemma is probably well known, we prove it here for the sake of completeness. For second countable finite dimensional compact Hausdorff spaces, it follows from a much more general result that $\Lsc (X,\NNbar )\in\Cu$ (see \cite[Theorem~5.15]{AntPerSan11PullbacksCu}). As a consequence of \autoref{wayb_decomp}, we will have that $\Lsc (X,\NNbar )$ is a \CuSgp{} whenever $X$ is a compact metric space; see \autoref{cor:LscXCu}.

Given $f\in \Lsc (X,\NNbar )$ and $n\in\NN$, we write $\{ f\geq n\}$ to denote the open set $f^{-1}([n,\infty])$. For an open set $U\subseteq X$, we denote by $\chi_U$ the indicator function of $U$.

\begin{lma} \label{wayb_decomp}
 For any topological space $X$ and any pair $f,g\in\Lsc (X,\NNbar )$, one has $f\ll g$ if and only if
 \[
  \chi_{\{ f\geq n\}}\ll \chi_{\{ g\geq n\}} \text{ for every $n$ and } \sup (f)<\infty.
 \]
\end{lma}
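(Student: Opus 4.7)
The statement reduces to the pointwise identity $f(x) = \sum_{n \geq 1} \chi_{\{f \geq n\}}(x)$ valid for any $f \in \Lsc(X,\NNbar)$, coupled with careful lifting of $\ll$-approximating sequences. I prove the two implications separately, with the forward direction requiring a tailored construction.

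\textbf{Sufficiency (the "if" direction).} Assume $\chi_{\{f \geq n\}} \ll \chi_{\{g \geq n\}}$ for every $n \geq 1$ and $\sup(f) = N < \infty$. Given an increasing sequence $(g_k)_k$ in $\Lsc(X,\NNbar)$ with $\sup_k g_k \geq g$, a pointwise check shows $\chi_{\{g \geq n\}} \leq \sup_k \chi_{\{g_k \geq n\}}$ for each $n$: on $\{g \geq n\}$ the increasing $\NNbar$-valued sequence $(g_k(x))_k$ has supremum $\geq n$, hence attains a value $\geq n$ at some $k$. For $n = 1, \ldots, N$ choose $k_n$ with $\chi_{\{f \geq n\}} \leq \chi_{\{g_{k_n} \geq n\}}$ and set $k := \max_n k_n$. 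Then pointwise $f = \sum_{n=1}^N \chi_{\{f \geq n\}} \leq \sum_{n=1}^N \chi_{\{g_k \geq n\}} \leq g_k$, yielding $f \ll g$.

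\textbf{Necessity (the "only if" direction).} Conversely, assume $f \ll g$. Applying the definition of $\ll$ to the increasing sequence $h_k := g \wedge k$, whose supremum is $g$ (in $\NNbar$), one gets $f \leq h_k \leq k$ for some $k$, whence $\sup(f) < \infty$. Now fix $n \geq 1$ and suppose $(u_k)_k$ is increasing in $\Lsc(X,\NNbar)$ with $\chi_{\{g \geq n\}} \leq \sup_k u_k$. Setting $U_k := \{u_k \geq 1\}$ (open by lower semicontinuity), the key construction is
\[
v_k \;:=\; g \,\wedge\, \bigl( (n-1)\chi_X + \infty\cdot\chi_{U_k} \bigr),
\]
equivalently $v_k = g$ on $U_k$ and $v_k = g \wedge (n-1)$ on $X \setminus U_k$. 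Computing superlevel sets, $\{v_k \geq m\}$ equals $\{g \geq m\}$ for $m \leq n-1$ and $U_k \cap \{g \geq m\}$ for $m \geq n$, both open; so $v_k \in \Lsc(X,\NNbar)$. The sequence is increasing since $U_k \subseteq U_{k+1}$, and $\sup_k v_k \geq g$: on $\{g \geq n\}$ the hypothesis forces $x \in \bigcup_k U_k$, while on $\{g < n\}$ one has $g = g \wedge (n-1)$. Applying $f \ll g$ gives $k$ with $f \leq v_k$; if $f(x) \geq n$ then $v_k(x) \geq n$, which rules out $x \in X\setminus U_k$ (where $v_k \leq n-1$), so $x \in U_k$ and $u_k(x) \geq 1$. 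Hence $\chi_{\{f \geq n\}} \leq \chi_{U_k} \leq u_k$, as required.

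\textbf{Main obstacle.} The delicate point is engineering the auxiliary sequence $(v_k)$ in the forward direction: it must climb up to $g$ (to exploit $f \ll g$) while being capped at $n-1$ off $U_k$ (so that any $x$ with $f(x) \geq n$ is forced into $U_k$). The cap $g \wedge (n-1)$ together with the "boost to $g$" on $U_k$ is exactly what couples the way-below approximation of $g$ to that of the $n$-th level indicator.
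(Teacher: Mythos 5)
Your proof is correct and follows essentially the same route as the paper's: the converse uses the decomposition $f=\sum_{n}\chi_{\{f\geq n\}}$ exactly as in the paper, and your auxiliary sequence $v_k=g\wedge\bigl((n-1)+\infty\cdot\chi_{U_k}\bigr)$ plays the identical role as the paper's $G_k=(n-1)+\chi_{\mathrm{supp}(h_k)}\sum_{r\geq 0}\chi_{\{g\geq n+r\}}$, namely a sequence with supremum $\geq g$ that is capped at $n-1$ off $U_k$, so that $f\leq v_k$ forces $\{f\geq n\}\subseteq U_k$. The only (welcome) difference is that you justify $\sup(f)<\infty$ explicitly via the sequence $g\wedge k$, where the paper simply calls it clear.
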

\begin{proof} 
 First, let us assume that $f\ll g$. Fix $n\in\mathbb{N}$ and consider an increasing sequence $(h_{k})_{k}$ such that 
 \[
  \chi_{\{ g\geq n\}}\leq \sup_{k} h_{k},
 \]
 which happens if and only if $\chi_{\{ g\geq n\}}\leq\chi_{\cup_{k}\text{supp}(h_{k})}$.
 
 Define the increasing sequence of functions
 \[
  G_{k}:= (n-1) + \chi_{\text{supp}(h_{k})}
  \sum_{r=0}^{\infty }\chi_{\{ g\geq n+r\}}
 \]
and note that $g\leq \sup_{k}G_{k}$.

Since $f\ll g$, we get that $f\leq G_{k}$ for some $k$ and, consequently,
\[
 \{f\geq n\} \subseteq \{G_{k}\geq n\}=\text{supp}(h_{k})\cap \{g\geq n\}\subseteq \text{supp}(h_{k}).
\]

This in turn implies $\chi_{\{f\geq n\}}\leq \chi_{\text{supp}(h_{k})}\leq h_{k}$, so it follows that $\chi_{\{f\geq n\}}\ll \chi_{\{g\geq n\}}$. That $\sup(f)<\infty$ is clear.
\vspace{0.1cm}

Conversely, if $\sup(f)<\infty$, we know that $f=\sum_{i=1}^{m} \chi_{\{ f\geq i \}}$ for some $m<\infty$. Furthermore, given an increasing sequence $(h_{k})_{k}$ with $g\leq \sup_{k}h_{k}$, it follows that
\[
 \{g\geq n\}\subseteq \bigcup_{k} \{h_{k}\geq n\}
\]
so $\chi_{\{ g\geq n\}}\leq \sup_{k}\chi_{\{h_{k}\geq n\}}$. Since $\chi_{\{ f\geq n\}}\ll\chi_{\{ g\geq n\}}$ for every $n$, we get that for each $i$ there exists an integer $k_{i}$ with
\[
 \chi_{\{ f\geq i\}}\leq\chi_{\{ h_{k_{i}}\geq i\}}.
\]

Taking $k=\sup_{i=1}^{m}\{k_{i}\}$, we have
\[
 f\leq \sum_{i=1}^{m} \chi_{\{ h_{k}\geq i\}}\leq h_{k}
\]
as desired.
\end{proof}

\begin{cor}\label{cor:LscXCu}
 Let $X$ be a compact metric space. Then $\Lsc (X,\NNbar )$ is a \CuSgp{} with pointwise order and addition.
\end{cor}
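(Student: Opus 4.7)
The plan is to verify the four axioms \axiomO{1}--\axiomO{4} for $S=\Lsc(X,\overline{\NN})$ equipped with pointwise operations. Axioms \axiomO{1} and \axiomO{4} I expect to dispose of immediately by pointwise arguments, since pointwise suprema of increasing sequences of lower semicontinuous functions remain lower semicontinuous and pointwise addition commutes with pointwise suprema of increasing sequences. The real work is concentrated in \axiomO{2} and \axiomO{3}, and both will be derived from \autoref{wayb_decomp}.

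For \axiomO{3}, the first step is to pin down the way-below relation among characteristic functions: for open $U,V\subseteq X$ I expect $\chi_{U}\ll\chi_{V}$ to be equivalent to $\overline{U}\subseteq V$. One direction uses that $\overline{U}$ is a closed subset of the compact space $X$, hence compact, so that any increasing sequence of open sets whose union contains $V$ must already contain $\overline{U}$ in one of its members; the other direction follows by applying the way-below condition to the canonical exhaustion of $V$ by open sets whose closures sit in $V$. Assuming $f'\ll f$ and $g'\ll g$, \autoref{wayb_decomp} will supply $\overline{\{f'\geq i\}}\subseteq\{f\geq i\}$ and the analogous inclusions for $g,g'$. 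I would then expand $\{f+g\geq n\}=\bigcup_{i+j=n}\{f\geq i\}\cap\{g\geq j\}$ as a \emph{finite} union, so that closure distributes, to deduce $\overline{\{f'+g'\geq n\}}\subseteq\{f+g\geq n\}$, and feed this back into \autoref{wayb_decomp} together with $\sup(f'+g')<\infty$ to conclude.

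For \axiomO{2}, given $g\in \Lsc(X,\overline{\NN})$ the plan is to construct compatible interior approximations of every level set simultaneously. The natural choice is
\[
U_{k}^{n}:=\{x\in\{g\geq n\}\,:\, d(x,X\setminus\{g\geq n\})>1/k\},
\]
which for each $n$ gives an increasing exhaustion of $\{g\geq n\}$ by open sets satisfying $\overline{U_{k}^{n}}\subseteq U_{k+1}^{n}$; because the level sets $\{g\geq n\}$ are decreasing in $n$, so are the $U_{k}^{n}$ for fixed $k$. Setting $f_{k}:=\sum_{n=1}^{k}\chi_{U_{k}^{n}}$ should give an increasing sequence with $\{f_{k}\geq n\}=U_{k}^{n}$ for $n\leq k$ and pointwise limit $g$, while the characterization of $\ll$ for characteristic functions combined with \autoref{wayb_decomp} (noting $\sup f_{k}\leq k<\infty$) will yield $f_{k}\ll g$.

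The main obstacle I anticipate is the coordinated choice of the interior approximations needed for \axiomO{2}: the sets $U_{k}^{n}$ must be simultaneously compatible with the nesting of the level sets in $n$ and with the convergence to $g$ as $k\to\infty$. The uniform definition via distance to the complement is precisely what makes this bookkeeping succeed in a compact metric space; without either the metric or the compactness one of these two compatibilities would have to be arranged by hand. Everything else reduces to formal manipulations with \autoref{wayb_decomp}.
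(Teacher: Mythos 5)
Your proposal follows essentially the same route as the paper: both rest on \autoref{wayb_decomp}, the identification of $\chi_U\ll\chi_V$ with compact containment $\overline{U}\subseteq V$, the finite-union expansion of $\{f+g\geq n\}$ for \axiomO{3}, and approximation by finite sums of indicator functions for \axiomO{2} (your explicit $U_k^n$ construction just fills in the bookkeeping the paper dismisses with ``one can check''). The only small adjustment: \axiomO{2} asks for a $\ll$-\emph{increasing} sequence, so you should conclude $f_k\ll f_{k+1}$ rather than merely $f_k\ll g$ --- which your inclusions $\overline{U_k^n}\subseteq U_{k+1}^n=\{f_{k+1}\geq n\}$ together with $\sup f_k\leq k<\infty$ already yield via \autoref{wayb_decomp}.
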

\begin{proof}
 Axioms \axiomO{1} and \axiomO{4} are always satisfied in $\Lsc (X,\NNbar )$, so we only need to prove \axiomO{2} and \axiomO{3}.
 
 First, note that $\chi_U\ll \chi_V$ if and only if $U$ is compactly contained in $V$. Indeed, if $\chi_U\ll \chi_V$, we can write $V$ as a countable increasing union of open sets $V_n$ such that $V_n$ is compactly contained in $V_{n+1}$ for every $n$. Thus, one gets $\chi_U\ll \sup_n \chi_{V_n}$, which implies that $U$ is  contained in $V_n$ for some $n$. Conversely, if $\overline{U}\subset V$ and $(W_n)_n$ is an increasing sequence of open sets with $V=\cup_n W_n$, it is clear that $U\subset W_n$ for some $n$. This shows $\chi_U\ll \chi_V$, as required.
 
 In particular, it follows that that every indicator function can be written as the supremum of a $\ll$-increasing sequence. Since every element in $S=\Lsc (X,\NNbar )$ is the supremum of finite sums of indicator functions, one can check that $S$ satisfies \axiomO{2}.
 
 Now let $f'\ll f$ and $g'\ll g$ in $S$, which by \autoref{wayb_decomp} implies that $\sup (f),\sup (g)<m\leq \infty$ and $\{ f'\geq n \}, \{ g'\geq n\}$ are compactly contained in $\{ f\geq n \}, \{ g\geq n\}$ respectively. Thus, we have
 \[
 \bigcup_{k=0}^{m}\overline{(\{ f'\geq k \}\cap\{ g'\geq n-k \})} \subset \bigcup_{k=0}^{m}(\{ f\geq k \}\cap\{ g\geq n-k \})
 \]
for every $n\leq \sup (f)+\sup (g)$, where note that the left hand side is equal to $\overline{\{ f'+g'\geq n\}}$ and the right hand side is contained in $\{f+g\geq n\}$. By \autoref{wayb_decomp} we have $f'+g'\ll f+g$, which shows that $S$ satisfies \axiomO{3}.
\end{proof}


\Msection{The topological space of an $\Lsc$-like $\Cu$-semigroup}{The topological space of a Lsc-like Cu-semigroup}\label{Topological_Lsc_like}

In this section, we associate to each $\Lsc$-like \CuSgp{} $S$ a topological space $X_S$. In \autoref{prp:BasicTop}, we prove some of the properties that such a topological space must satisfy and, using these, we show that $\Lsc (X_S,\NNbar )$ is always a \CuSgp{}; see \autoref{Lsc_Cu}. In \autoref{LscXS_iff_Lsc_like}, we will see that $S$ and $\Lsc (X_S,\NNbar )$ are in fact $\Cu$-isomorphic.

We also introduce notions for  \CuSgp{s} that have a topological equivalent when the semigroup is $\Lsc$-like.
More explicitly, given an $\Lsc$-like \CuSgp{} $S$, we characterize when $X_{S}$ is second countable, normal and metric in terms of algebraic properties of $S$; see \autoref{prp:TopProp}.

\begin{pgr}\label{pgr:TopSpaCuSgp}
Let $S$ be an $\Lsc$-like \CuSgp{} with least order unit $e$. The \emph{topological space} $X_S$ of $S$ is defined as 
\[
    X_{S}:=\{ x\in S\mid x<e \text{  maximal} \},           
\]
with closed subsets
\[
 C_{y}:=\{ x\in X_{S}\mid x\geq y\}, \quad y\leq e.
\]
\end{pgr}

We check that this is indeed a topology for $X_{S}$.

\begin{lma} \label{topology}
 Let $S$ be an $\Lsc$-like \CuSgp{}. Then, $\{X_S\setminus C_y \mid y\leq e\}$ is a $T_1$-topology for $X_{S}$.
\end{lma}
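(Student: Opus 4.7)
The plan is to check that the family $\mathcal{C}:=\{C_y : y\leq e\}$ is closed under arbitrary intersections and finite unions and contains both $\emptyset$ and $X_S$, and then to verify the $T_1$ condition by showing each singleton $\{x\}\subseteq X_S$ is of the form $C_x$. First, I would observe that $X_S=C_0$ (trivially) and $\emptyset=C_e$ (no element $x<e$ can satisfy $x\geq e$).

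For arbitrary intersections, given a family $(y_i)_{i\in I}$ in $\downarrow e$, the supremum $y:=\bigvee_i y_i$ exists by completeness of $S$ and again lies in $\downarrow e$ since $e$ is an upper bound of the family. Then directly from the universal property of the supremum, $\bigcap_i C_{y_i}=C_y$, which is in $\mathcal{C}$.

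The main step is finite unions, which reduces to proving $C_y\cup C_z=C_{y\wedge z}$ for $y,z\leq e$. The inclusion $\subseteq$ is clear by monotonicity of $C_{(-)}$. For $\supseteq$, take $x\in C_{y\wedge z}$, so $x\in X_S$ and $x\geq y\wedge z$. Consider $x\vee y$ and $x\vee z$, both of which lie in $\downarrow e$. By maximality of $x$ among elements strictly below $e$, each of $x\vee y$ and $x\vee z$ equals either $x$ or $e$. If both equalled $e$, then by distributivity of the lattice,
\[
e=e\wedge e=(x\vee y)\wedge(x\vee z)=x\vee(y\wedge z)=x,
\]
where the last equality uses $x\geq y\wedge z$; this contradicts $x<e$. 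Hence $x\vee y=x$ or $x\vee z=x$, i.e. $x\geq y$ or $x\geq z$, giving $x\in C_y\cup C_z$. This is the only step where the algebraic hypotheses on $S$ really enter, and it is where I expect the bulk of the argument to live.

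For the $T_1$ axiom, given $x\in X_S$ I would check that $C_x=\{x\}$: if $x'\in X_S$ satisfies $x'\geq x$, then $x\leq x'\leq e$ together with maximality of $x$ forces $x'=x$ or $x'=e$, and $x'<e$ rules out the second option. Since each singleton is closed, the topology is $T_1$.
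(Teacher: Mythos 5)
Your proof is correct and follows essentially the same route as the paper: the identities $C_0=X_S$, $C_e=\emptyset$, $\bigcap_i C_{y_i}=C_{\vee_i y_i}$, $C_x=\{x\}$, and the key union step via maximality and lattice distributivity (the paper treats $n$ sets at once where you reduce to the binary case, an immaterial difference).
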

\begin{proof}
 First note that $C_{0}=X_S$ and that $C_{e}=\emptyset$. Moreover, $C_{x}=\{ x\}$ for every $x\in X_S$. Thus, our topology is $T_1$.
 
 To see that arbitrary intersections of $C_{y}$'s are of the form $C_{z}$ for some $z\leq e$, simply note that
 \[
  \bigcap_{i} C_{y_{i}} = C_{\vee_{i}y_{i}} .
 \]
 Further, one also has that
 \[
  \bigcup_{i=1}^{n} C_{y_{i}} = C_{\wedge_{i=1}^{n}y_{i}} .
 \]
 Indeed, given $x\in X_S$ with $x\geq \wedge_{i=1}^{n}y_{i}$, we have
 \[
  x= x\vee x \geq x\vee (\wedge_{i=1}^{n}y_{i})=(x\vee y_{1})\wedge \cdots\wedge (x\vee y_{n}).
 \]
 
Since $x$ is maximal, for each $i$ we either have $x\vee y_{i}=x$ or $x\vee y_{i}=e$.

However, note that the previous inequality implies that we cannot have $x\vee y_{i}=e$ for every $i$, so $x=x\vee y_{j}\geq y_{j}$ for some $j$.
 
 The other inclusion is clear.
\end{proof}

Retaining the above notation, for every $y\leq e$ we will denote by $U_y$ its \emph{associated open subset}. That is, $U_y=X\setminus C_y$.

We list some properties of these sets. Recall that for every pair of elements $y\leq z\leq e$ the element $y\setminus z$ denotes the almost complement of $z$ by $y$, as constructed in \autoref{lma:supsemi}.

\begin{prp}\label{prp:BasicTop}
 Let $S$ be an $\Lsc$-like \CuSgp{} with least order unit $e$. Then:
 \begin{enumerate}[(i)]
  \item For every $y,z\leq e$, $C_{y}\subseteq C_{z}$ if and only if $y\geq z$.
  \item For every $y\leq e$, $U_y = \{x\in X_{S}\mid y\vee x=e\}$.
  \item Given $y,z\leq e$ such that $U_{y}\subseteq C_{z}$, we have $y\wedge z=0$.
  \item The closure of $U_{y}$, denoted by $\overline{U_{y}}$, is $C_{y\setminus e}$ for every $y\leq e$.
  \item Given $y\leq e$, we have $\Int (C_{y})=X\setminus\overline{(X\setminus C_{y})}=U_{y\setminus e}$.
  \item For every $y,z\leq e$, $C_{y}\subseteq U_{z}$ if and only if $y\vee z=e$.
 \end{enumerate}
\end{prp}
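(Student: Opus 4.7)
The plan is to establish each part in order, leaning heavily on \autoref{Comparatibility} and the almost-complement construction from \autoref{lma:supsemi}, plus the elementary formulas $\bigcap_i C_{y_i}=C_{\vee_i y_i}$ and $C_{y}\cap C_{z}=C_{y\vee z}$ noted in \autoref{topology}.

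For (i), the ``if'' direction is immediate. For the converse, assume $C_y\subseteq C_z$; then every maximal $x<e$ with $y\leq x$ satisfies $z\leq x$, so \autoref{Comparatibility} gives $z\leq y$. For (ii), note that every $x\in X_S$ is maximal below $e$, so $x\vee y\in\{x,e\}$; hence $x\not\geq y$ if and only if $x\vee y=e$.

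For (iii), I would apply \autoref{Comparatibility} with the roles of $z,y$ replaced by $y\wedge z$ and $0$. Fix $x\in X_S$. If $y\leq x$ then $y\wedge z\leq x$ trivially; otherwise $x\in U_y\subseteq C_z$, so $z\leq x$ and again $y\wedge z\leq x$. Thus $y\wedge z\leq 0$. Part (iv) is the main step. I would first show that $U_y\subseteq C_w$ is \emph{equivalent} to $y\wedge w=0$: the forward direction is (iii), and conversely, for $x\in U_y$ we have $e=x\vee y$, whence, using that $S$ is distributively lattice ordered,
\[
w=w\wedge e=w\wedge(x\vee y)=(w\wedge x)\vee(w\wedge y)=w\wedge x,
\]
so $w\leq x$. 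Writing $\overline{U_y}$ as the intersection of all closed sets containing it, and using the intersection formula from \autoref{topology},
\[
\overline{U_y}=\bigcap_{w\leq e,\ y\wedge w=0}C_w = C_{\bigvee\{w\leq e\,:\,y\wedge w=0\}}.
\]
It remains to identify $\bigvee\{w\leq e: y\wedge w=0\}$ with $y\setminus e$. For any $x$ with $y+x\leq e$ we have $x\leq e$ (otherwise $y+x>e$), and since $\downarrow\! e$ is topological and $y+x=(y\vee x)+(y\wedge x)$, the inequality $(y\vee x)+(y\wedge x)\leq e+0$ forces $y\wedge x=0$; conversely if $w\leq e$ satisfies $y\wedge w=0$ then $y+w=y\vee w\leq e$. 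Hence the two suprema defining $y\setminus e$ and $\bigvee\{w\leq e: y\wedge w=0\}$ coincide, giving $\overline{U_y}=C_{y\setminus e}$.

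Parts (v) and (vi) then follow quickly. Part (v) is just the general identity $\Int(C_y)=X_S\setminus\overline{X_S\setminus C_y}=X_S\setminus\overline{U_y}$ combined with (iv). For (vi), note that $C_y\subseteq U_z$ is equivalent to $C_y\cap C_z=\emptyset$, i.e.\ $C_{y\vee z}=\emptyset$; since $X_S$ consists of the maximal elements strictly below $e$, this happens exactly when $y\vee z=e$ (the remark following \autoref{Comparatibility} guarantees that any element strictly below $e$ lies under some maximal $x<e$). The only step that requires real care is the identification $\bigvee\{w\leq e: y\wedge w=0\}=y\setminus e$ in (iv); everything else is a direct application of the lattice/topological structure already established.
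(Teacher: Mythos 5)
Your proof is correct, and for parts (i), (ii), (v) and (vi) it follows essentially the same route as the paper: \autoref{Comparatibility} for (i), maximality of the points of $X_S$ for (ii), and the intersection formula $C_y\cap C_z=C_{y\vee z}$ together with the remark after \autoref{Comparatibility} for (vi). Where you genuinely diverge is in (iii) and, more substantially, (iv). For (iii) the paper argues by contradiction, picking a point of $U_{y\wedge z}$ and deriving $x=e$; your direct application of \autoref{Comparatibility} (with $0$ in place of $y$) is cleaner and equivalent. For (iv) the paper first proves the inclusion $U_y\subseteq C_{y\setminus e}$ by writing $y+x=(y\vee x)+(y\wedge x)\geq e\geq y\setminus e+y$ and invoking the cancellation property of \autoref{pgr:canc}, and then shows minimality of $C_{y\setminus e}$ among closed sets containing $U_y$ using (iii). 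You instead upgrade (iii) to the equivalence $U_y\subseteq C_w\iff y\wedge w=0$ via distributivity of the lattice, compute $\overline{U_y}$ as $C_{\vee\{w\leq e\,:\,y\wedge w=0\}}$ using the arbitrary-intersection formula from \autoref{topology}, and then identify that supremum with $y\setminus e$ directly from its definition in Step~1 of \autoref{lma:supsemi}, using only that $\downarrow e$ is topological. This avoids \autoref{pgr:canc} entirely and makes the closure computation purely lattice-theoretic; the identification $\vee\{w\leq e: y\wedge w=0\}=y\setminus e$ that you flag as the delicate step is indeed the crux, and your verification of it (both inclusions of the index sets) is sound --- in fact the paper's own Step~1 of \autoref{lma:supsemi} already observes that elements $x$ with $y+x\leq e$ satisfy $y\wedge x=0$. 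The trade-off is that the paper's argument generalizes more readily to situations where one only has cancellation, while yours isolates exactly which lattice identities are being used.
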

\begin{proof}
 To see (i) recall that, by definition, $C_{y}\subseteq C_{z}$ if and only if $x\geq z$ for every $x\geq y$ with $x<e$ maximal. Using \autoref{Comparatibility}, we see that this is equivalent to $y\geq z$.\vspace{0.1cm}
 
 For (ii), let $y\leq e$ and take  $x<e$ be maximal. Thus, we have $x\leq y\vee x\leq e$. Since $x$ is maximal, we either have $x=y\vee x\geq y$ (i.e. $x\in C_{y}$) or $y\vee x=e$. Thus, $U_{y}= \{ x\in X_S: y\vee x=e \}$.\vspace{0.1cm}
 
 To prove (iii), let us assume, for the sake of contradiction, that $y\wedge z\neq 0$. Then, $U_{y\wedge z}$ is nonempty and we can consider a maximal element $x\in U_{y\wedge z}$.
 
 By (ii), we have $x\vee (y\wedge z)=e$. Thus, one gets $x\vee y=e$ and, consequently, $x\geq z$ from our assumption that $U_y \subseteq C_z$. However, we also have
 \[
  e= x\vee (y\wedge z) = (x\vee y)\wedge (x\vee z) = e\wedge x=x
 \]
 which is a contradiction, as required.\vspace{0.1cm}
 
 Let us now prove (iv) and, consequently, (v). First note that, if $y\vee x=e$, we have
 \[
  y+x=(y\vee x) + (y\wedge x)\geq e\geq y\setminus e +y.
 \]
 
Cancelling $y$ (see \autoref{pgr:canc}), we have that $x\geq y\setminus e$. This shows $U_{y}\subset C_{y\setminus e}$.

Conversely, let $z$ be such that $U_{y}\subset C_{z}$. By (iii), we know that this implies $y\wedge z=0$ and, consequently, $y+z\leq e$. Thus, $z\leq y\setminus e$ or, equivalently, $C_{y\setminus e}\subset C_{z}$.\vspace{0.1cm}

Finally, for (vi), assume first that $C_{y}\subset U_{z}$. Also, assume for the sake of contradiction that $y\vee z\neq e$. Then, there exists $x\in X_{S}$ with $x\geq y\vee z$. This implies $x\geq y$ and, consequently, $x\vee z=e$ from $C_y\subseteq U_z$. 
 However, we have
 \[
  x = x\vee y\vee z \geq x\vee z=e
 \]
which is a contradiction.

Conversely, if $y\vee z=e$, take $x\geq y$, which implies $x\vee z\geq y\vee z=e$. In particular, $x\in U_{z}$.
\end{proof}

\begin{exa}\label{exa:LscSpace}
 Let $X$ be a $T_{1}$ topological space. Recall from \autoref{exa_Lsc_is_Lsc} that $S=\Lsc (X, \NNbar )$ is an $\Lsc$-like \CuSgp{} with least order unit $1$. Then, the topological space of $S$ is homeomorphic to $X$.
 
 Indeed, note that the maximal elements below $1$ are the characteristic functions $\chi_{X\setminus\{ x \}}$. Thus, we have
 \[
  X_{S} =\{ \chi_{X\setminus\{ x \}}\mid x\in X \}
 \]
and 
 \[
  \begin{split}
   U_{\chi_{\mathcal{U}}} &= X_{S}\setminus C_{\chi_{\mathcal{U}}}=
   X_{S}\setminus\{ \chi_{X\setminus\{ x \}} \mid \chi_{X\setminus\{ x \}}\geq \chi_{\mathcal{U}} \}\\
   &= X_{S}\setminus\{ \chi_{X\setminus\{ x \}}\mid x\in X\setminus\mathcal{U} \} = \{ \chi_{X\setminus\{ x \}}\mid x\in\mathcal{U} \}
  \end{split}
 \]
for every open subset $\mathcal{U}\subset X$.

It should now be clear that $\varphi\colon X_{S}\rightarrow X$ defined as $\chi_{X\setminus\{ x \}}\mapsto x$ is a homeomorphism between $X$ and $X_{S}$.
\end{exa}

The following characterizes compact containment under certain conditions.

\begin{lma}\label{Hausdorff}
Let $S$ be an $\Lsc$-like \CuSgp{} with least order unit $e$. Assume that $e$ is compact and that $S$ satisfies \axiomO{5}. Then $X_S$ is normal.

Further, given $y,z\leq e$, we have $\overline{U_{y}}\subset U_{z}$ if and only if $y\ll z$.
\end{lma}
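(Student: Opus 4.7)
The plan is to first prove the $\ll$-characterization $\overline{U_y}\subseteq U_z\iff y\ll z$, and then deduce normality as a standard consequence. By \autoref{prp:BasicTop}(iv) we have $\overline{U_y}=C_{y\setminus e}$, and by (vi) the inclusion $C_{y\setminus e}\subseteq U_z$ is equivalent to $(y\setminus e)\vee z=e$. Thus the task reduces to showing $y\ll z\iff (y\setminus e)\vee z=e$.

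For the forward direction, I would apply \axiomO{5} to $y\ll z\leq e$ to produce some $c\in S$ with $y+c\leq e\leq z+c$. The first inequality gives $c\leq y\setminus e$ by the defining property of the almost-complement established in \autoref{lma:supsemi}, while the second, via \autoref{Sum_cup} (applied to $s=e$), forces $z\vee c=e$. Combining these, $(y\setminus e)\vee z\geq c\vee z=e$.

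For the converse, I would fix a $\ll$-increasing sequence $(z_n)_n$ with $\sup_n z_n=z$. Since $\vee$ commutes with increasing suprema in a complete distributively lattice ordered \CuSgp{}, one has $\sup_n\bigl((y\setminus e)\vee z_n\bigr)=(y\setminus e)\vee z=e$, and compactness of $e$ then supplies some $n$ with $(y\setminus e)\vee z_n=e$. Converting back via $x+y=(x\vee y)+(x\wedge y)$ yields
\[
z_n+(y\setminus e)\geq z_n\vee(y\setminus e)=e\geq y+(y\setminus e),
\]
and the cancellation of \autoref{pgr:canc} (valid because all three summands lie below $e$) gives $y\leq z_n\ll z$, hence $y\ll z$.

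Finally, for normality, I would use the familiar reformulation that a space is normal if and only if each closed set contained in an open set admits an open neighborhood whose closure still lies in that open set. Disjoint closed sets $C_y\cap C_z=\emptyset$ amount to $y\vee z=e$ (via \autoref{Comparatibility} applied to $C_{y\vee z}=\emptyset$), which by \autoref{prp:BasicTop}(vi) is equivalent to $C_y\subseteq U_z$. Picking a $\ll$-increasing $(z_n)_n$ with $\sup_n z_n=z$, compactness of $e$ applied to $\sup_n(y\vee z_n)=e$ yields some $n$ with $y\vee z_n=e$. Setting $V:=U_{z_n}$, the inclusion $C_y\subseteq V$ follows once more from \autoref{prp:BasicTop}(vi) and $\overline{V}\subseteq U_z$ from the equivalence just established, so $V$ and $X_S\setminus\overline{V}$ separate $C_y$ from $C_z$. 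I expect the main obstacle to be the converse direction of the $\ll$-characterization, where one must pass from a purely lattice-theoretic identity back to a statement about the way-below relation; this is precisely the step that requires both the compactness of $e$ (to truncate the supremum) and the topological character of $\downarrow e$ (to legitimize the cancellation).
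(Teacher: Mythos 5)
Your proposal is correct, and it reaches the two conclusions in the opposite order from the paper, with a genuinely different argument for normality. The paper proves normality directly: starting from $C_x\cap C_y=\emptyset$, i.e.\ $x+y\geq e\gg e$, it picks $x'\ll x$, $y'\ll y$ with $x'+y'\gg e$, applies \axiomO{5} \emph{twice} to produce $c,d\leq e$ with $x'+c\leq e\leq x+c$ and $y'+d\leq e\leq y+d$, and then uses weak cancellation and the topological order to extract $c\wedge d=0$, $x\vee c=e$, $y\vee d=e$. You instead prove the $\ll$-characterization first and then obtain normality as a corollary via the standard ``shrink a closed set inside an open set'' reformulation, using only compactness of $e$ to truncate $y\vee z=\sup_n(y\vee z_n)$; this is arguably cleaner and avoids the second application of \axiomO{5}. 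For the characterization itself, your forward direction ($y\ll z\Rightarrow(y\setminus e)\vee z=e$) coincides with the paper's, merely making the appeal to \autoref{Sum_cup} explicit. In the converse direction the paper writes $y\setminus e+z\geq e\gg e\geq y\setminus e+y$ and invokes the $\ll$-version of cancellation (the extension of \autoref{pgr:canc} mentioned in the remark following it), whereas you truncate the join to some $z_n\ll z$ using compactness of $e$ and the fact that $\vee$ commutes with increasing suprema, and then only need the plain $\leq$-cancellation of \autoref{pgr:canc}; both routes use compactness of $e$ at the same essential point, but yours stays within the literally stated form of the cancellation lemma. All the intermediate facts you cite (\autoref{prp:BasicTop}(iv) and (vi), the defining property of $y\setminus e$ from \autoref{lma:supsemi}, and \autoref{Comparatibility}) are used correctly, so I see no gap.
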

\begin{proof}
Let $x,y<e$ be two elements such that $C_{x}\cap C_{y}=C_{e}=\emptyset$. In terms of the elements in $S$, this is equivalent to $x+y\geq e\gg e$. Then, we can take $x'\ll x$ and $y'\ll y$ such that $x'+y'\gg e$.
 
 Using \axiomO{5}, there exist $c,d\leq e$ such that
 \[
  x'+c\leq e\leq x+c\quad , \quad y'+d\leq e\leq y+d .
 \]

 Consequently, we also have $x'+c+y'+d\leq e+e$ with $x'+y'\gg  e$. Since every $\Lsc$-like \CuSgp{} has weak cancellation, it follows that $c+d\leq e$.
 
 Since our order is topological, we get $c\wedge d=0$, $x\vee c= e$ and $y\vee d=e$.
 
 Using the properties listed in \autoref{prp:BasicTop}, the previous inequalities imply that $C_{x}\subset U_{c}$, $C_{y}\subset U_{d}$ and $U_{c}\cap U_{d}= U_{c\wedge d}=\emptyset$. Thus, $X_{S}$ is normal.\vspace{0.1cm}

 Now let $y,z\leq e$ and assume that $\overline{U_{y}}\subset U_{z}$, which by (iv) in  \autoref{prp:BasicTop}  happens if and only if $(y\setminus e)\vee z=e$. Further, since $y\setminus e + z\geq e$, we have $y\setminus e + z\geq e\gg e\geq y\setminus e + y$. As elements below $e$ have cancellation (see \autoref{pgr:canc}), one gets $z\gg y$.
 
 Conversely, assume that $e\geq z\gg y$. Then, by \axiomO{5} there exists an element $x$ such that $y+x\leq e$ and $e\leq z+x$.
 
 From the first inequality, it follows that $x\leq y\setminus e$, so $e\leq z+x\leq z+y\setminus e$ as required.
\end{proof}
In \autoref{prp:TopProp} below we study notions for \CuSgp{s} that have a topological equivalent when the semigroup is $\Lsc$-like. Recall from \autoref{pgr:AdditionalAxioms} that a \CuSgp{} is said to be \emph{countably based} if it has a countable sup-dense subset.

\begin{dfn}
We say that an inf-semilattice ordered \CuSgp{} $S$ is \emph{normal} if there exists an order unit $z\in S$ such that, whenever $x+ y\geq z$ for some $x,y\in S$, there exist $s,t\in S$ with
 \[
  x+ s\geq z\quad , \quad y+ t\geq z\quad , \quad s\wedge t=0.
 \]
\end{dfn}

\begin{prp}\label{prp:TopProp}
 Let $S$ be an $\Lsc$-like \CuSgp{} and let $X_S$ be its associated topological space. Then:
 \begin{enumerate}[(i)]
  \item $X_{S}$ is second countable if and only if $S$ is countably based.
  \item $X_{S}$ is countably compact if and only if $S$ has a compact order unit.
  \item $X_S$ is normal if and only if $S$ is normal.
  \item $X_{S}$ is a metric space whenever $S$ is countably based and normal.
  \item $X_S$ is a compact metric space whenever $S$ is  countably based, has a compact order unit and satisfies (O5).
 \end{enumerate}
\end{prp}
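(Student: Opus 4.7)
The approach is to address items (i)--(v) in sequence, exploiting the bijection $y \mapsto C_y$ between $\downarrow e$ and the closed subsets of $X_S$, together with the identities $\bigcap_i C_{y_i} = C_{\vee_i y_i}$ and $\bigcup_{i=1}^n C_{y_i} = C_{\wedge_i y_i}$ from \autoref{topology}. For (i), if $D \subseteq S$ is a countable sup-dense subset, I would first show that $\{d \wedge e : d \in D\} \subseteq \downarrow e$ is countable and sup-dense in $\downarrow e$: any $z \leq e$ written as $z = \sup_n d_n$ with $(d_n)$ increasing in $D$ already satisfies $d_n \leq z \leq e$, so the intersected subset suffices. Then $U_z = \bigcup_n U_{d_n}$ whenever $z = \sup_n d_n$, giving a countable basis. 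Conversely, a countable basis $\{U_{y_n}\}$ (every open being of the form $U_y$) yields a countable sup-dense subset $\{y_n\}$ of $\downarrow e$, and condition (C2) promotes this to a countable sup-dense subset of $S$ by closure under finite sums (using \axiomO{4} to move the supremum inside the sum).

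For (ii), if $e \ll e$ then any countable cover $X_S = \bigcup_n U_{y_n}$ translates into $\vee_n y_n = e$; the partial suprema $s_k := y_1 \vee \cdots \vee y_k$ form an increasing sequence in $\downarrow e$ with supremum $e$, and compactness of $e$ yields $s_K = e$ for some $K$, hence a finite subcover. Conversely, given an increasing $(z_n)$ in $S$ with $\sup z_n \geq e$, set $w_n := z_n \wedge e$. The key step is $\sup_n w_n = e$: for any $e' \ll e$, the definition of $\ll$ applied to $e' \leq \sup_n z_n$ gives $e' \leq z_N$ and hence $e' = e' \wedge e \leq w_N$ for some $N$; letting $e'$ range over an $\ll$-increasing approximation of $e$ (guaranteed by \axiomO{2}) yields $\sup w_n \geq e$, and the reverse inequality is immediate. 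Then $\{U_{w_n}\}$ is an increasing countable open cover of $X_S$; countable compactness forces $U_{w_N} = X_S$ for some $N$, so $w_N = e$ and thus $z_N \geq e$.

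For (iii), given $X_S$ normal and $x + y \geq e$, \autoref{Sum_cup} gives $x \vee y \geq e$, so $(x \wedge e) \vee (y \wedge e) = (x \vee y) \wedge e = e$ by distributivity; the closed sets $C_{x \wedge e}$ and $C_{y \wedge e}$ are then disjoint in $X_S$. Topological normality provides disjoint opens $U_s, U_t$ (with $s, t \leq e$, since every open is of this form) containing them; by \autoref{prp:BasicTop}(vi), $(x \wedge e) \vee s = e$, so $x + s \geq (x \wedge e) \vee s = e$, and $U_s \cap U_t = U_{s \wedge t} = \emptyset$ gives $s \wedge t = 0$, exhibiting $S$ as normal with the least order unit $e$ as witness. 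The converse is analogous: disjoint $C_a, C_b$ with $a, b \leq e$ satisfy $a \vee b = e$ and thus $a + b \geq e$; the elements $s, t$ provided by $S$-normality can be replaced by $s \wedge e, t \wedge e$, which then yield disjoint open sets $U_{s \wedge e}, U_{t \wedge e}$ separating $C_a$ from $C_b$.

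For (iv), the topology on $X_S$ is $T_1$ by \autoref{topology}, so combining (i) and (iii) places $X_S$ in the hypotheses of Urysohn's metrization theorem (second countable, $T_1$, and regular, since normal plus $T_1$ implies regular). For (v), \autoref{Hausdorff} already provides normality under the added hypotheses of a compact order unit and \axiomO{5}, so Urysohn again gives metrizability; countable compactness of a metric space coincides with compactness, concluding the argument. The main obstacle I anticipate is the lattice-theoretic manipulation in (ii) to prove $\sup_n(z_n \wedge e) = e$ from $\sup_n z_n \geq e$: this is not automatic in a general lattice but holds here thanks to \axiomO{2} for $e$ together with the complete Heyting algebra structure on $\downarrow e$ from \autoref{prp:Heyting}. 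In (iii) one must also be careful to fix the witness for normality of $S$ as the least order unit $e$, which is the choice making the topological and algebraic formulations match.
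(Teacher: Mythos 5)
Your overall strategy matches the paper's: everything is translated through the correspondence $y\mapsto C_y$ using \autoref{prp:BasicTop} and the identities from \autoref{topology}, and your arguments for (i), (iv), (v), and for the equivalence ``$e$ is compact if and only if $X_S$ is countably compact'' are correct and essentially those of the paper. There are, however, two concrete gaps. First, in (ii) the hypothesis is that $S$ has \emph{some} compact order unit, not that the least order unit $e$ is compact, and your argument only establishes ``$e\ll e$ iff $X_S$ is countably compact''. The implication ``$S$ has a compact order unit $\Rightarrow e\ll e$'' is not automatic and the paper devotes a separate argument to it: if $p\ll p$ is an order unit, then $p\leq ne$ for some $n$, so by \autoref{lma:decomp1} one writes $p=q_1+\cdots+q_m$ as an ordered sum in $\downarrow e$, deduces from $p\ll p$ (via \autoref{wayb_C5}, or weak cancellation) that $q_1\ll q_1$, notes that $p\leq mq_1$ forces $\infty q_1=\infty$ so that $q_1$ is a compact order unit below $e$, and finally invokes minimality of $e$ among order units to conclude $e=q_1$. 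Without this step the direction ``compact order unit $\Rightarrow$ countably compact'' of (ii) is unproved, and the same reduction is needed in (v) before you may apply \autoref{Hausdorff}, whose hypothesis is that $e$ itself is compact.

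Second, in (iii) the direction ``$S$ normal $\Rightarrow X_S$ normal'' cannot apply the normality of $S$ directly to $a+b\geq e$: the definition only provides the splitting for sums dominating the \emph{witness} order unit $z$, which may be strictly larger than $e$. The paper's fix is to pass to $\infty a+\infty b=\infty\geq z$, obtain $s,t$ with $\infty a+s\geq z\geq e$, $\infty b+t\geq z\geq e$ and $s\wedge t=0$, and only then cut down to $\downarrow e$: \autoref{Sum_cup} gives $\infty a\vee s\geq e$, and taking the infimum with $e$ via \autoref{lma:IntInf} together with the Heyting identity of \autoref{prp:Heyting} yields $a\vee(s\wedge e)=e$, whence $C_a\subseteq U_{s\wedge e}$ by (vi) of \autoref{prp:BasicTop}. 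Your remark that ``$s,t$ can be replaced by $s\wedge e,t\wedge e$'' captures the cutting-down step, but the invocation of normality itself is only licensed after replacing $a,b$ by $\infty a,\infty b$. Both gaps are local and repairable along these lines; the remainder of your argument is sound.
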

\begin{proof}
Let $e\in S$ be the least order unit of $S$. To show (i), assume first that $S$ is countably based with a countable basis $B$, and let $\sum '(\downarrow e)$ denote the set of finite sums of elements in $\downarrow e$. Naturally, the set $B':=B\cap \sum '(\downarrow e)$ is also a countable basis for $S$.
 
 Given an open set $U_{y}$ with $y\leq e$, write $y=\sup_{n} y_{n}$ with $y_{n}\in B'$. We have $\cup_{n} U_{y_{n}} = U_{y}$, and so $X_{S}$ is second countable.

Conversely, assume that $X_{S}$ is second countable with basis $C=\{U_{z_{n}}\}_{n}$. Note that the family $C'$ consisting of all the finite unions of sets in $C$ is also countable. 

Then, any open subset $U_{y}$ can be written as the countable union of increasing open subsets $U_{z'_{n}}$ of $C'$. We know that this is equivalent to $y=\sup_{n}z'_{n}$. This implies that the set $\sum ' \{ z'_{n}\}_{n}$ of finite sums from $\{ z'_n\}_n$ is a countable basis for $\sum ' (\downarrow e)$ and, since $\sum ' (\downarrow e)$ is dense in $S$, $\sum ' \{ z'_{n}\}_{n}$ is a countable basis for $S$.

To prove (ii), note that it is easy to check that $e\in S$ is compact if and only if $X_{S}$ is countably compact. Therefore, we are left to prove that, if there exists a compact order unit in $S$, then $e$ must also be compact.
 
 To see this, let $p$ be a compact order unit in $S$, which implies that $p\leq ne$ for some $n\in\NN$. Since we know that $p$ can be written as a finite ordered sum of elements below $e$, there exist $m\in\mathbb{N} $ and elements $q_{m}\leq \cdots\leq q_{1}\leq e$ such that $p=q_{1}+\cdots +q_{m}$.
 
 By weak cancellation applied to $q_1+\cdots +q_m\ll q_1+\cdots +q_m$, the element $q_{1}$ is  compact and satisfies $\infty q_{1}=\infty p=\infty$. Thus, $q_{1}$ is a compact order unit with $e\geq q_{1}$.  By minimality of $e$, one gets $e=q_{1}$ compact as required.
 
 Let us now show (iii). First, assume that $S$ is normal and let $z$ be the associated order unit. Let $C_x , C_y$ be closed subsets of $X_S$ with $x,y\leq e$, and recall that $C_{x}, C_{y}$ are disjoint if and only if $x\vee y=e$.
 
 Since $e$ is an order unit, we have $\infty x + \infty y= \infty \geq z$. Thus, we get $s,t$ such that $\infty x +s\geq z$, $\infty y +t\geq z$ and $s\wedge t=0$. As $z\geq e$, we know by \autoref{Sum_cup} that $\infty x\vee s\geq e$ and $\infty y\vee t\geq e$.
 
 Since $x,y\leq e$, taking the infimum with $e$ and using \autoref{lma:IntInf} we have $e=(\infty x\wedge e)\vee (s\wedge e)=x\vee (s\wedge e)$, $y\vee (t\wedge e)=e$ and $(s\wedge e)\wedge (t\wedge e)=0$. By (vi) in  \autoref{prp:BasicTop}, it follows that $C_{x}\subset U_{s\wedge e}$, $C_{y}\subset U_{t\wedge e}$ and $U_{s\wedge e}\cap U_{t\wedge e}=\emptyset$. This implies that $X_{S}$ is normal.
 
 Conversely, if $X_{S}$ is normal, it is easy to see that $S$ is normal by setting $z=e$ in the definition of normality.
 
 To prove (iv), we have that $X_{S}$ is second countable and normal by (i) and (iii), and that $C_{x}=\{x \}$ for any $x\in X_{S}$. Thus, points are closed in our topology, so $X_{S}$ is Hausdorff. We can now use Urysohn's metrization theorem to conclude that $X_{S}$ is metric (see, e.g. \cite[Theorem~34.1]{Munk2000}).
 
 For (v), note that $e\ll e$. Thus, \autoref{Hausdorff} implies that $X_S$ is normal. Following the arguments above, one gets that $X_S$ is metric. Moreover, we also know that $X_S$ is second countable and countably compact by (i) and (ii) above. Thus, $X_S$ is compact.
\end{proof}

We will now show that $\Lsc (X_{S},\NNbar )$ is a \CuSgp{} with the usual way-below relation for every $\Lsc$-like \CuSgp{} $S$. Note that \axiomO{1} and \axiomO{4} are always satisfied, so we are left to prove \axiomO{2} and \axiomO{3}.

\begin{lma} \label{wayb_comp}
Let $S$ be an $\Lsc$-like \CuSgp{} with least order unit $e$. Given $y,z\leq e$, we have $\chi_{U_{y}}\ll \chi_{U_{z}}$ in $\Lsc (X_{S},\NNbar )$ if and only if $y\ll z$ in $S$.
\end{lma}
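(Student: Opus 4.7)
The plan is to reformulate the way-below relation on characteristic functions $\chi_U\ll \chi_V$ in $\Lsc(X_S,\NNbar)$ as a purely lattice-theoretic condition on the open sets of $X_S$: every increasing sequence $(W_n)_n$ of open sets with $V\subseteq \bigcup_n W_n$ has some $W_m$ containing $U$. Given such a cover one sets $h_n:=\chi_{W_n}$, and conversely any increasing $(h_n)_n$ in $\Lsc(X_S,\NNbar)$ with $\sup_n h_n\geq \chi_V$ yields such a cover via $W_n:=\{h_n\geq 1\}$. By \autoref{topology} and (i) of \autoref{prp:BasicTop}, the open sets of $X_S$ are exactly the $U_w$ with $w\leq e$ and $U_v\subseteq U_w\iff v\leq w$; moreover $\bigcup_n U_{w_n}=U_{\sup_n w_n}$ for any increasing $(w_n)_n$ in $\downarrow e$. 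Thus the reformulation becomes: $\chi_{U_y}\ll \chi_{U_z}$ if and only if for every increasing $(w_n)_n$ in $\downarrow e$ with $\sup_n w_n\geq z$ there is $m$ with $y\leq w_m$.

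The implication from $y\ll z$ in $S$ to $\chi_{U_y}\ll \chi_{U_z}$ is now immediate, since the condition above trivially applies to increasing sequences in $\downarrow e\subseteq S$.

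For the converse, assume $\chi_{U_y}\ll \chi_{U_z}$ and take an increasing sequence $(t_n)_n$ in $S$ with $\sup_n t_n\geq z$. Set $w_n:=t_n\wedge e\in\downarrow e$; this gives an increasing sequence. The core step is to show $\sup_n w_n\geq z$. Suppose this fails; by \autoref{Comparatibility} one can pick a maximal $x<e$ with $x\geq \sup_n w_n$ and $x\not\geq z$. Part (ii) of \autoref{prp:BasicTop} then gives $x\vee z=e$, and the opening remark of \autoref{Lsc_like_Cu_sem} (binary joins commute with increasing suprema) yields $\sup_n(x\vee t_n)=x\vee\sup_n t_n=e$. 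Distributivity together with $x\leq e$ gives $(x\vee t_n)\wedge e=x\vee w_n$. For any $e'\ll e$, pick $m$ with $e'\leq x\vee t_m$; then $e'=e'\wedge e\leq (x\vee t_m)\wedge e=x\vee w_m$, and taking the supremum over $e'\ll e$ via \axiomO{2} forces $\sup_n(x\vee w_n)=e$. But each $x\vee w_n$ lies in $[x,e]$, and the maximality of $x$ in $\downarrow e\setminus\{e\}$ forces $x\vee w_n\in\{x,e\}$: the option $x\vee w_n=x$ for all $n$ would give supremum $x\neq e$, while $x\vee w_m=e$ for some $m$ gives $x\in U_{w_m}$ by (ii) of \autoref{prp:BasicTop}, contradicting $x\geq w_m$. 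Hence $\sup_n w_n\geq z$, and the reformulation of $\chi_{U_y}\ll \chi_{U_z}$ produces $m$ with $y\leq w_m\leq t_m$.

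The main obstacle is precisely the inequality $\sup_n(t_n\wedge e)\geq z$: the naive distributivity $(\sup_n t_n)\wedge e\leq \sup_n(t_n\wedge e)$ need not hold in general complete lattices, and we sidestep it by translating the question to the maximal points of $X_S$ via \autoref{Comparatibility}, using the commutation of binary joins with increasing suprema together with \axiomO{2} as substitutes for infinite distributivity of $\wedge$ with respect to sequential suprema.
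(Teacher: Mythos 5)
Your proof is correct, and its overall skeleton matches the paper's: both directions are run through the dictionary between open subsets of $X_S$ and elements of $\downarrow e$ furnished by \autoref{topology} and \autoref{prp:BasicTop}(i), and the forward implication is essentially identical to the paper's. The genuine difference is in the converse. The paper, given an increasing sequence $(h_n)_n$ in $S$ with $\sup_n h_n\geq z$, simply says that by replacing $h_n$ with $z\wedge h_n$ one may assume $h_n\leq e$; this silently uses $\sup_n(z\wedge h_n)\geq z$, an infinite distributivity statement that does not follow from $S$ being a complete distributive lattice, and \autoref{prp:Heyting} does not apply directly since the $h_n$ need not lie in $\downarrow e$. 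You isolate exactly this point and supply a proof: assuming $\sup_n(t_n\wedge e)\not\geq z$, you extract via \autoref{Comparatibility} a maximal $x<e$ lying above every $t_n\wedge e$ but not above $z$, and derive a contradiction from the commutation of binary joins with sequential suprema, finite distributivity, \axiomO{2}, and the maximality of $x$. This is a real gain in rigour rather than a detour, at the cost of a slightly longer argument. Two cosmetic remarks: the intermediate equality $x\vee\sup_n t_n=e$ should read $x\vee\sup_n t_n\geq e$ (the supremum of the $t_n$ may well exceed $z$), which is all you actually use; and in your final dichotomy only the branch $x\vee w_n=x$ can occur, since $x\geq w_n$ for all $n$ by construction, so the case $x\vee w_m=e$ is vacuous. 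Neither affects the validity of the argument.
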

\begin{proof}
 Assume $y\ll z$, and let $(f_{n})_{n}$ be an increasing sequence in $\Lsc (X_{S},\NNbar )$ such that $\chi_{U_{z}}\leq \sup_{n} f_{n}$. In particular, note that this holds if and only if $\chi_{U_{z}}\leq \chi_{\cup_{n} \text{supp}(f_{n})}$ or, equivalently, if 
 \[
  \bigcap_{n}(X_S\setminus\text{supp}(f_{n})) \subseteq C_{z}.
 \]
 
 Denote by $z_{n}$ the elements in $\downarrow e$ with $C_{z_{n}}=X_S\setminus\text{supp}(f_{n})$. Given that $\text{supp}(f_{n})\subseteq \text{supp}(f_{n+1})$, (i) in \autoref{prp:BasicTop} implies that $z_{n}$ is increasing.
 
 Using the proof of \autoref{topology} in the first step, we can rewrite the previous inclusion as
 \[
  C_{\sup_{n}(z_{n})} = \bigcap_{n} C_{z_n}\subseteq C_{z}.
 \]
 
Applying (i) in \autoref{prp:BasicTop} once again, one gets $z\leq \sup_{n}(z_{n})$ and, consequently, $y\leq z_{n}$ for some $n$. This implies that $U_{y}\subseteq U_{z_{n}}$ or, equivalently, that $\chi_{U_{y}}\leq \chi_{U_{z_{n}}}= \chi_{\text{supp}(f_{n})}\leq f_{n}$. This shows $\chi_{U_{y}}\ll \chi_{U_{z}}$.
\vspace{0.1cm}

Now let $y,z\leq e$ be such that $\chi_{U_{y}}\ll \chi_{U_{z}}$, and consider an increasing sequence $(h_{n})_{n}$ in $S$ such that $z\leq \sup_{n}(h_{n})$. Note that, by taking $z\wedge h_{n}$ instead of $h_{n}$, we can assume $h_{n}\leq e$ for every $n$.

Applying again (the proof of) \autoref{topology}, one gets
\[
 \bigcap_{n} C_{h_{n}}=C_{\sup_{n}(h_{n})}\subseteq C_{z},
\]
and, consequently, we have $\sup_{n}\chi_{U_{h_{n}}}\geq \chi_{U_{z}}$ since $\chi_{U_{y}}\ll \chi_{U_{z}}$, there exists some $n$ with $\chi_{U_{y}}\leq \chi_{U_{h_{n}}}$; i.e. $C_{h_n}\subseteq C_y$.

Using (i) in \autoref{prp:BasicTop} one last time, one sees that $y\leq h_{n}$ as required.
\end{proof}

\begin{prp}\label{LscXS_O2}
 Let $S$ be a \CuSgp{}. If $S$ is $\Lsc$-like, then $\Lsc (X_{S},\NNbar )$ satisfies \axiomO{2}.
\end{prp}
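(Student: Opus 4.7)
The plan is to exhibit, for each $f \in \Lsc(X_S, \NNbar)$, a $\ll$-increasing sequence $(f^{(k)})_k$ with supremum $f$. Since the open sets of $X_S$ are exactly the sets $U_y$ with $y \leq e$, the level sets $\{f \geq n\}$ are of the form $U_{y_n}$ for some decreasing sequence $(y_n)_n$ in $\downarrow e$, and $f = \sum_{n \geq 1} \chi_{U_{y_n}}$ as a pointwise sum. Applying \axiomO{2} in $S$ to each $y_n$ furnishes, for every $n$, a $\ll$-increasing sequence $(y_n^{(k)})_k$ in $\downarrow e$ (automatic since $y_n^{(k)} \leq y_n \leq e$) with $\sup_k y_n^{(k)} = y_n$.

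Set $z_n^{(k)} := y_1^{(k)} \wedge y_2^{(k)} \wedge \cdots \wedge y_n^{(k)}$, which is decreasing in $n$ by construction. The heart of the argument is to establish the two properties $z_n^{(k)} \ll z_n^{(k+1)}$ in $S$ and $\sup_k z_n^{(k)} = y_n$. To obtain them, consider the sum $\sigma_n^{(k)} := y_1^{(k)} + \cdots + y_n^{(k)}$. Iterated application of \autoref{std_formula} (the classical elementary-symmetric identity in a distributive lattice) rewrites $\sigma_n^{(k)}$ as an ordered sum of $n$ terms in $\downarrow e$, with top summand $\bigvee_i y_i^{(k)}$ and bottom summand $\bigwedge_i y_i^{(k)} = z_n^{(k)}$. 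Axiom \axiomO{3} gives $\sigma_n^{(k)} \ll \sigma_n^{(k+1)}$, and the extended form of \autoref{wayb_C5} (for sums of more than two summands) transfers this to the ordered decompositions coordinatewise, yielding in particular $z_n^{(k)} \ll z_n^{(k+1)}$. Axiom \axiomO{4} gives $\sup_k \sigma_n^{(k)} = y_1 + \cdots + y_n$, which is already in ordered form since the $y_i$ are decreasing; the topological nature of the order in $\downarrow e$ forces each ordered summand to converge, so in particular $\sup_k z_n^{(k)} = y_n$.

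Define $f^{(k)} := \sum_{n=1}^k \chi_{U_{z_n^{(k)}}}$ in $\Lsc(X_S, \NNbar)$. Because $z_n^{(k)}$ is decreasing in $n$, this is the lower semicontinuous function with level sets $\{f^{(k)} \geq n\} = U_{z_n^{(k)}}$ for $n \leq k$, and in particular $\sup f^{(k)} \leq k < \infty$. The relation $f^{(k)} \ll f^{(k+1)}$ now follows from \autoref{wayb_decomp} combined with \autoref{wayb_comp}: for each $n \leq k$ we need $\chi_{U_{z_n^{(k)}}} \ll \chi_{U_{z_n^{(k+1)}}}$, i.e.\ $z_n^{(k)} \ll z_n^{(k+1)}$, which is exactly the first key property; the new level set at $n = k+1$ trivially satisfies $0 \ll \chi_{U_{z_{k+1}^{(k+1)}}}$. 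Pointwise, $\sup_k f^{(k)} = f$: for each $x \in X_S$ one has $x \in U_{z_n^{(k)}}$ for $k$ sufficiently large whenever $x \in U_{y_n}$ (using $\sup_k z_n^{(k)} = y_n$), so $f^{(k)}(x) \uparrow |\{n : x \in U_{y_n}\}| = f(x)$.

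The main obstacle is the identification of the bottom summand of the ordered decomposition of $\sigma_n^{(k)}$ as the meet $z_n^{(k)}$, which is what allows \autoref{wayb_C5} to transfer the way-below relation to the specific summand we care about; the bookkeeping via \autoref{std_formula}, together with the frame structure on $\downarrow e$ from \autoref{prp:Heyting}, is what makes this identification (and the parallel computation for the top summand, conceptually useful but not strictly needed here) precise.
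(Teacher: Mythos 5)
Your proof is correct, and it shares the paper's overall skeleton: decompose $f$ into its level sets $U_{y_n}$, approximate each $y_n$ by a $\ll$-increasing sequence in $\downarrow e$ using \axiomO{2} in $S$, assemble staircase functions $f^{(k)}$ from the first $k$ approximants, and conclude $f^{(k)}\ll f^{(k+1)}$ via \autoref{wayb_decomp} and \autoref{wayb_comp}. Where you genuinely diverge is in how you force the $k$ chosen approximants to be decreasing in $n$, which is what makes the $U$'s the actual level sets of $f^{(k)}$. The paper does this by an inductive choice of indices $n_{i,k}$: working downward in $i$, it uses $y_{i+1,n}\ll y_{i+1}\leq y_{i}=\sup_{m}y_{i,m}$ to pick a later approximant of $y_{i}$ dominating the chosen approximant of $y_{i+1}$; this needs nothing beyond the definition of $\ll$. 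You instead take the meets $z_{n}^{(k)}=y_{1}^{(k)}\wedge\cdots\wedge y_{n}^{(k)}$, which are decreasing for free, and must then recover $z_{n}^{(k)}\ll z_{n}^{(k+1)}$ and $\sup_{k}z_{n}^{(k)}=y_{n}$; your route through \axiomO{3}, the ordered decomposition of \autoref{std_formula} (whose bottom summand is indeed the total meet, by induction along the iterative scheme of \autoref{OFS}), the multi-summand form of \autoref{wayb_C5}, and the topological order on $\downarrow e$ (applied to $\sum_{j}\sup_{k}w_{j}^{(k)}=y_{1}+\cdots+y_{n}$, both sides being ordered sums in $\downarrow e$) does deliver both. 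So your argument trades the paper's elementary but fiddly index bookkeeping for heavier use of the lattice and ordered-sum machinery of \autoref{Lsc_like_Cu_sem}; it is slightly less self-contained, but the approximants are canonical rather than chosen, which makes the verification of $\sup_{k}f^{(k)}=f$ cleaner.
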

\begin{proof}
 Take $f\in \Lsc (X_{S},\NNbar )$, and let $(y_{i})_{i}$ be the sequence on $\downarrow e$ such that
 \[
  \{f\geq i\} = U_{y_{i}},
 \]
where recall that the sequence is decreasing as a consequence of (i) in \autoref{prp:BasicTop}.

Since $S$ satisfies \axiomO{2}, we have $y_{i}=\sup_{n} y_{i,n}$ with $y_{i,n}\ll y_{i,n+1}$ for every $n$.

For every fixed $k$, we have
\[
 y_{1}\geq\cdots\geq y_{k}\gg y_{k,n} \text{ for all }n.
\]

Thus, for every $i$ one can choose inductively $n_{i,k}$ with $k\geq i$ such that
\[
 y_{i,n_{i,k}}\ll y_{i,n_{i,k+1}}, \andSep y_{1,n_{1,k}}\geq\cdots\geq y_{k,n_{k,k}}.
\]

Indeed, we begin by setting $n_{1,1}=1$ (i.e. $y_{1,n_{1,1}}=y_{1,1}$). Then, assuming that we have defined $n_{i,k}$ for every $i,k\leq m-1$ (and $k\geq i$) for some fixed $m$, we set $n_{m,m}=1$, that is, $y_{m,n_{m,m}}=y_{m,1}$. We then set $n_{m-1,m}$ large enough so that $y_{m-1,n_{m-1,m}}\geq y_{m,n_{m,m}}$ and $n_{m-1,m}\geq n_{m-1,m-1}$. Similarly, we set $n_{m-2,m}\geq n_{m-2,m-1}$ such that $y_{m-2,n_{m-2,m}}\geq y_{m-1,n_{m-1,m}}$ and define $n_{i,m}$ for every $i\leq m-2$ in the same fashion.

Now consider the sums $f_{k}=\sum_{i=1}^{k} \chi_{U_{y_{i,n_{i,k}}}}$, which are ordered by construction. Thus, one has $U_{y_{i,n_{i,k}}}=\{ f_k\geq i\}$ for every $i$. Since $y_{i,n_{i,k}}\ll y_{i,n_{i,k+1}}$ for every $i$, it follows from \autoref{wayb_decomp} and \autoref{wayb_comp} that
\[
 f_{k}=\sum_{i=1}^{k} \chi_{U_{y_{i,n_{i,k}}}}\ll \sum_{i=1}^{k} \chi_{U_{y_{i,n_{i,k+1}}}} \leq f_{k+1}.
\]

It is now easy to check that $\sup_{k} f_{k}=f$.
\end{proof}

\begin{thm} \label{Lsc_Cu}
Let $S$ be a \CuSgp{}. 
 If $S$ is $\Lsc$-like, the monoid $\Lsc (X_{S},\NNbar )$ is a \CuSgp{}.
\end{thm}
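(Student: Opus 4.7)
The axioms \axiomO{1} and \axiomO{4} hold pointwise in $\Lsc(X,\NNbar)$ for any topological space $X$, and \axiomO{2} has just been established in \autoref{LscXS_O2}. Thus the only remaining task is to verify \axiomO{3}: if $f'\ll f$ and $g'\ll g$ in $\Lsc(X_S,\NNbar)$, then $f'+g'\ll f+g$. The plan is to translate the way-below relation in $\Lsc(X_S,\NNbar)$ into the way-below relation in $S$ via \autoref{wayb_decomp} and \autoref{wayb_comp}, and then exploit the algebraic structure of $S$ to conclude.

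Concretely, by \autoref{wayb_decomp} the hypothesis gives $\sup f',\sup g'<\infty$ together with $\chi_{\{f'\geq n\}}\ll\chi_{\{f\geq n\}}$ and $\chi_{\{g'\geq n\}}\ll\chi_{\{g\geq n\}}$ for every $n$. Since every open subset of $X_S$ has the form $U_y$ for some $y\leq e$, I would write the level sets as $\{f'\geq n\}=U_{a'_n}$, $\{f\geq n\}=U_{a_n}$, $\{g'\geq n\}=U_{b'_n}$, $\{g\geq n\}=U_{b_n}$, and apply \autoref{wayb_comp} to obtain $a'_n\ll a_n$ and $b'_n\ll b_n$ in $S$ for every $n$.

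Next, the pointwise identity
\[
\{f'+g'\geq n\}=\bigcup_{k=0}^n \{f'\geq k\}\cap\{g'\geq n-k\},
\]
combined with the lattice formulas $U_y\cap U_z=U_{y\wedge z}$ and $U_y\cup U_z=U_{y\vee z}$ (dual to the identities in the proof of \autoref{topology}), expresses $\{f'+g'\geq n\}$ as $U_{c'_n}$ with $c'_n=\vee_{k=0}^n(a'_k\wedge b'_{n-k})$, and similarly $\{f+g\geq n\}=U_{c_n}$ for $c_n=\vee_{k=0}^n(a_k\wedge b_{n-k})$. Since $\sup(f'+g')<\infty$, reversing the translation via \autoref{wayb_comp} and \autoref{wayb_decomp} reduces \axiomO{3} to showing $c'_n\ll c_n$ in $S$.

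The core algebraic input will be that both $\wedge$ and $\vee$ preserve $\ll$ within $\downarrow e$: given $a\ll a^*$ and $b\ll b^*$ in $\downarrow e$, axiom \axiomO{3} of $S$ and the identity $x+y=(x\vee y)+(x\wedge y)$ yield
\[
(a\vee b)+(a\wedge b)\ll(a^*\vee b^*)+(a^*\wedge b^*),
\]
and since both sides are ordered sums of elements of $\downarrow e$, \autoref{wayb_C5} extracts the separate relations $a\vee b\ll a^*\vee b^*$ and $a\wedge b\ll a^*\wedge b^*$. A straightforward induction promotes preservation of $\vee$ to finite joins, and applying these two preservation properties to the expressions for $c'_n$ and $c_n$ gives $c'_n\ll c_n$, completing the proof. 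The only difficulty is notational bookkeeping: all the essential machinery is already available, and the key observation — compatibility of $\ll$ with $\wedge$ and $\vee$ on $\downarrow e$ — is an immediate consequence of \axiomO{3} in $S$ combined with \autoref{wayb_C5}.
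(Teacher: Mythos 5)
Your proposal is correct and follows essentially the same route as the paper's proof: translate to level sets via \autoref{wayb_decomp} and \autoref{wayb_comp}, use the convolution identity for $\{f+g\geq n\}$ together with the lattice structure of $\downarrow e$, and extract the needed way-below relations from \axiomO{3} in $S$ via \autoref{wayb_C5}. The only (immaterial) difference is organizational — the paper applies \axiomO{3} to the full sums $\sum_i(y_i+z_i)$ and reorders with \autoref{std_formula} before invoking \autoref{wayb_C5}, whereas you first isolate the $\ll$-compatibility of $\vee$ and $\wedge$ on $\downarrow e$ and then apply it termwise.
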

\begin{proof}
Note that the semigroup $\Lsc (X_S ,\NNbar )$ always satisfies \axiomO{1} and \axiomO{4}. Moreover, we already know that \axiomO{2} is also satisfied by \autoref{LscXS_O2}. Thus, we are left to prove \axiomO{3}.

 Let $f\ll f'$ and $g\ll g'$. By \autoref{wayb_decomp}, this implies that
 \[
   \chi_{\{ f\geq i\}} \ll \chi_{\{ f'\geq i\}}\andSep
   \chi_{\{ g\geq i\}} \ll \chi_{\{ g'\geq i\}}
 \]
for every $i$. We also know that there exists $m<\infty$ such that $\sup (f),\sup (g)\leq m$.

Let $y_{i},y'_{i},z_{i}$ and $z'_{i}$ be elements in $\downarrow e$ such that 
\[
   U_{y_{i}}= \{ f\geq i\},\quad U_{y'_{i}}= \{ f'\geq i\},\quad 
   U_{z_{i}}= \{ g\geq i\},\andSep U_{z'_{i}}= \{ g'\geq i\}.
\]

 By \autoref{wayb_comp}, we have $y_{i}\ll y'_{i}$ and $z_{i}\ll z'_{i}$ for every $i$ and, since $S$ satisfies \axiomO{3}, one gets
 \[
  \sum_{i=1}^{m} (y_{i}+z_{i}) \ll \sum_{i=1}^{m} (y'_{i}+z'_{i}).
 \]
 
 By \autoref{std_formula}, these sums can be rewritten as
 \[
  \sum_{i=1}^{2m} \vee_{j=0}^{m}(y_{j}\wedge z_{i-j}) \ll \sum_{i=1}^{2m} \vee_{j=0}^{m}(y'_{j}\wedge z'_{i-j})
 \]
 and, since both the right and left hand side of the previous inequality are ordered, we can use \autoref{wayb_C5} to obtain
 \[
  \vee_{j=0}^{m}(y_{j}\wedge z_{i-j}) \ll \vee_{j=0}^{m}(y'_{j}\wedge z'_{i-j})
 \]
 for every $i$.
 
 Note that
 \[
  \{ f+g\geq i \} = \bigcup_{j=0}^{m}(\{ f\geq j \}\cap\{ g\geq i-j \})
 \]
so, by the equalities in the proof of \autoref{topology}, one gets 
\[
 X_S\setminus\{ f+g\geq i \} = \bigcap_{j=0}^{m} ((X_S\setminus \{ f\geq j \})\cup (X_S\setminus\{ g\geq i-j \}))
 =
 \bigcap_{j=0}^{m} (C_{y_{j}\wedge z_{i-j}})
 =
 C_{\vee_{j=0}^{m}(y_{j}\wedge z_{i-j})}
\]
 and, consequently, $\chi_{\{ f+g\geq i \}} = \chi_{U_{\vee_{j=0}^{m}(y_{j}\wedge z_{i-j})}}$.
 
The same argument also shows that $\chi_{\{ f'+g'\geq i \}}\geq \chi_{ U_{\vee_{j=0}^{m}(y'_{j}\wedge z'_{i-j})}}$.

By using \autoref{wayb_comp}, we get $\chi_{\{ f+g\geq i \}}\ll \chi_{\{ f'+g'\geq i \}}$ for every $i$. \autoref{wayb_decomp} then implies that $f+g\ll f'+g'$, as desired.
\end{proof}

\Msection{An abstract characterization of $\Lsc (X,\NNbar )$}{An abstract characterization of Lsc(X,N)}\label{Sect_Abst_Char}

In this section we prove that every $\Lsc$-like \CuSgp{} $S$ is $\Cu$-isomorphic to the semigroup of lower-semicontinuous functions $ \Lsc(X_{S},\NNbar )$; see \autoref{LscXS_iff_Lsc_like}. To do so, we first define a map $\varphi '\colon \Lsc(X_{S},\NNbar )_{fs} \rightarrow S$, where $\Lsc (X_{S},\NNbar )_{fs}$ is the subsemigroup of functions with finite supremum.

We then extend this map to a \CuMor{} $\varphi \colon \Lsc (X_{S},\NNbar ) \rightarrow S$. Finally, the following (probably well known) lemma will be used to complete the proof.

\begin{lma} \label{iso_red}
 Let $S,H$ be \CuSgp{s} and let $\varphi\colon S\rightarrow H$ be a \CuMor{} such that
 \begin{enumerate}[(i)]
  \item  $\varphi$ is an order embedding in a basis of $S$.
  \item  $\varphi(S)$ is a basis for $H$.
 \end{enumerate}
Then, $\varphi$ is a $\Cu$-isomorphism
\end{lma}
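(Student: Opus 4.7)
My plan is to prove the statement in three stages: (a) upgrade the order embedding on a basis to an order embedding on all of $S$ (in particular obtaining injectivity); (b) deduce surjectivity from the basis condition on $H$; and (c) verify that the set-theoretic inverse of $\varphi$ is a \CuMor{}.

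For (a), let $B\subseteq S$ be the basis on which $\varphi$ restricts to an order embedding, and suppose $\varphi(s)\leq \varphi(t)$. Using \axiomO{2} together with the basis property, I would pick $\ll$-increasing sequences $(s_n)_n,(t_m)_m$ in $B$ with $s=\sup_n s_n$ and $t=\sup_m t_m$. Since $\varphi$ is a \CuMor{},
\[
 \varphi(s_n)\ll \varphi(s_{n+1})\leq \varphi(t)=\sup_m \varphi(t_m),
\]
so there exists $m$ with $\varphi(s_n)\leq \varphi(t_m)$. As $s_n$ and $t_m$ both lie in $B$, the hypothesis gives $s_n\leq t_m\leq t$, and taking the supremum over $n$ yields $s\leq t$. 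Applying this to both directions of an equality $\varphi(s)=\varphi(t)$ then gives injectivity.

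For (b), take $h\in H$. Since $\varphi(S)$ is a basis for $H$, I would write $h=\sup_n \varphi(s_n)$ for a $\ll$-increasing sequence $(\varphi(s_n))_n$. Step (a) shows that $(s_n)_n$ is itself increasing in $S$; setting $s=\sup_n s_n$ and using that $\varphi$ preserves suprema of increasing sequences gives $\varphi(s)=\sup_n\varphi(s_n)=h$, so $\varphi$ is surjective.

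For (c), $\varphi$ is now a bijective order isomorphism and a positively ordered monoid morphism, so $\varphi^{-1}$ automatically preserves $0$, addition, and suprema of increasing sequences. It remains to verify that $\varphi^{-1}$ preserves the way-below relation. Suppose $\varphi(s)\ll \varphi(t)$ and take an increasing sequence $(t_n)_n$ in $S$ with $t\leq \sup_n t_n$. Applying $\varphi$ gives $\varphi(t)\leq \sup_n\varphi(t_n)$, so there exists $n$ with $\varphi(s)\leq \varphi(t_n)$; step (a) then yields $s\leq t_n$, and therefore $s\ll t$, finishing the proof.

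The main obstacle is step (a): the order-embedding hypothesis only applies inside $B$, so the proof has to manoeuvre both $s$ and $t$ into $B$-approximations while retaining enough $\ll$-structure to land a single $\varphi(s_n)$ below a single $\varphi(t_m)$. The crucial ingredient is combining \axiomO{2} (which makes the approximations $\ll$-increasing in $B$) with the fact that \CuMor{s} preserve $\ll$, so that the compactness contained in the way-below relation on the image side lets each $\varphi(s_n)$ slot below some $\varphi(t_m)$.
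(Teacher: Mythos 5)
Your proposal is correct and follows essentially the same route as the paper: the paper asserts the global order embedding with "it is easy to see" and then proves surjectivity exactly as in your step (b), writing $h=\sup_n\varphi(s_n)$, using the embedding to see $(s_n)_n$ is increasing, and passing to $s=\sup_n s_n$. Your steps (a) and (c) simply supply the details the paper leaves implicit (the interpolation argument upgrading the embedding from the basis to all of $S$, and the verification that the inverse preserves $\ll$), and they are sound.
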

\begin{proof}
It is easy to see that $\varphi$ is a global order embedding.

To prove surjectivity, let $h\in H$. Since $\varphi(S)$ is a basis for $H$, we can write $h=\sup_{n} \varphi (s_{n})$ for some $s_{n}\in S$. Further, as we know that $\varphi$ is an order embedding, the sequence $(s_{n})_{n}$ is increasing in $S$, so $\sup_{n} \varphi (s_{n})=\varphi (s)$ for $s=\sup_{n} s_{n}$.
\end{proof}

\begin{dfn}
Let $S$ be an $\Lsc$-like \CuSgp{}. 
 Given $f\in \Lsc (X_{S},\NNbar )_{fs}$, there exists $m<\infty$ such that we can write
 \[
  f=\sum_{i=1}^{m} \chi_{\{ f\geq i \}}.
 \]
 
 We define the map $\varphi'\colon \Lsc (X_{S},\NNbar )_{fs}\rightarrow S$ as $\varphi'(f)=\sum_{i=1}^{m}z_{i}$, where $\{ f\geq i \}=U_{z_{i}}$.
\end{dfn}

\begin{lma}
 $\varphi'$ is a positively ordered monoid morphism  and an order embedding.
\end{lma}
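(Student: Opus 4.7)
The plan is to verify each claimed property by translating statements about the level sets $\{f \geq i\}$ into lattice statements about the corresponding elements in $\downarrow e$, via the bijection $U_y \leftrightarrow y$ coming from part (i) of \autoref{prp:BasicTop}. First, $\varphi'$ is well-defined: for $f \in \Lsc(X_S,\NNbar)_{fs}$ with $\sup f = m$, the chain $\{f \geq 1\} \supseteq \cdots \supseteq \{f \geq m\}$ corresponds to a uniquely determined decreasing sequence $e \geq y_1 \geq \cdots \geq y_m$ in $\downarrow e$, so the ordered sum $\sum_i y_i$ depends only on $f$. The identity $\varphi'(0)=0$ is the empty sum, and $\varphi'(f)\geq 0$ is automatic since $S$ is positively ordered.

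For additivity, I would fix $f,g \in \Lsc(X_S,\NNbar)_{fs}$ and, after padding with zero terms to a common length $m$, write $f=\sum_{i=1}^m \chi_{U_{y_i}}$ and $g=\sum_{i=1}^m \chi_{U_{z_i}}$ with decreasing $y_i, z_i \in \downarrow e$. The identities $U_a \cup U_b = U_{a \vee b}$ and $U_a \cap U_b = U_{a \wedge b}$ follow from the computations $\bigcap_i C_{y_i} = C_{\vee_i y_i}$ and $C_a \cup C_b = C_{a \wedge b}$ established within the proof of \autoref{topology}, combined once more with (i) of \autoref{prp:BasicTop}. Consequently,
\[
\{f+g \geq k\} = \bigcup_{j=0}^m \bigl(\{f \geq j\} \cap \{g \geq k-j\}\bigr) = U_{\vee_{j=0}^m (y_j \wedge z_{k-j})},
\]
so $\varphi'(f+g) = \sum_{k=1}^{2m} \vee_{j=0}^m (y_j \wedge z_{k-j})$. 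A direct application of \autoref{std_formula} identifies this sum with $\sum_i y_i + \sum_i z_i = \varphi'(f)+\varphi'(g)$.

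Monotonicity is then immediate: $f \leq g$ is equivalent to $\{f \geq i\} \subseteq \{g \geq i\}$ for every $i$, hence to $y_i \leq z_i$ for every $i$ by (i) of \autoref{prp:BasicTop}, which gives $\varphi'(f) \leq \varphi'(g)$ after summing. For the order-embedding direction, the crucial step, I would argue the converse as follows: if $\varphi'(f) \leq \varphi'(g)$ then $\sum_i y_i \leq \sum_i z_i$ is an inequality between two finite ordered sums of elements of $\downarrow e$, and since $\downarrow e$ is topological (condition (C2) in \autoref{dfn:LscLikeCuSgp}), it forces $y_i \leq z_i$ componentwise, from which $f \leq g$ follows by reversing the dictionary. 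The main obstacle is the additivity step, specifically aligning the suprema of meets $\vee_j (y_j \wedge z_{k-j})$ arising from the decomposition of $\{f+g \geq k\}$ with the iterated form produced by \autoref{std_formula}; this is essentially bookkeeping once the $U_y$--to--$\downarrow e$ translation is set up, after which both monotonicity and the order-embedding claim drop out cleanly from (C2) and the basic topological dictionary.
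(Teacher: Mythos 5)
Your proof is correct and relies on the same ingredients as the paper's: the dictionary $U_a\cup U_b=U_{a\vee b}$, $U_a\cap U_b=U_{a\wedge b}$ from the proof of \autoref{topology}, part (i) of \autoref{prp:BasicTop}, \autoref{std_formula}, and the topological order on $\downarrow e$ for the order-embedding step. The only difference is that you prove additivity by the direct level-set computation $\{f+g\geq k\}=U_{\vee_j(y_j\wedge z_{k-j})}$ combined with \autoref{std_formula}, whereas the paper argues by induction on the number of summands of $f$ --- a variant the paper itself explicitly notes is available.
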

\begin{proof}
 Let $f=\sum_{i=1}^{m} \chi_{U_{z_{i}}}$ as above and take $g=\sum_{j=1}^{n} \chi_{U_{y_{j}}}$. 
 We will first prove by induction on $m$ that $\varphi' (f+g)=\varphi'(f)+\varphi'(g)$.
 
 For $m=1$, $f$ is simply $\chi_{U_{z}}$ for some open subset $U_{z}$. Since $\Lsc (X_{S},\NNbar )$ is distributively lattice ordered and $0\leq 0\leq\cdots\leq 0\leq f$ is an increasing sequence, we can apply \autoref{std_formula} to get
 \[
  f + \sum_{j=1}^{n} \chi_{U_{y_{j}}} = \chi_{U_{z}\cup U_{y_{1}}} + \chi_{(U_{z}\cap U_{y_{1}})\cup U_{y_{2}} } + \cdots + \chi_{U_{z}\cap (\cap_{j} U_{y_{j}})}.
 \]
 
Applying $\varphi'$ and the equalities in the proof of \autoref{topology} at the first step, and \autoref{std_formula} at the second step, one gets
\[
 \begin{split}
 \varphi'(f+g) &= (z\vee y_{1}) + ((z\wedge y_{1})\vee y_{2}) + \cdots + (z\wedge y_{1}\wedge\cdots\wedge y_{n})\\
 &=z+(y_{1}+\cdots +y_{n}) = \varphi'(f)+ \varphi'(g).
 \end{split}
\]

Now fix any finite $m$ and assume that the result has been proven for any $k\leq m-1$. Then, using the induction hypothesis at the second step, and the case $m=1$ at the third and fourth steps, we have
\[
 \begin{split}
  \varphi '(f+g) &= \varphi' ( (f-\chi_{U_{z_{m}}}) + (g+\chi_{U_{z_{m}}}) ) = \varphi' ( f-\chi_{U_{z_{m}}}) + \varphi' (g+\chi_{U_{z_{m}}})\\
  &= \varphi' ( f-\chi_{U_{z_{m}}}) + \varphi' (g) + \varphi' (\chi_{U_{z_{m}}})= \varphi' ( f-\chi_{U_{z_{m}}}+\chi_{U_{z_{m}}}) + \varphi' (g)\\
  &= \varphi' (f) + \varphi' (g)
 \end{split}
\]
as desired.

Note that this could have also been proved using \autoref{std_formula}.

To see that $\varphi'$ is an order embedding, let $f\leq g$ in $\Lsc (X_{S},\NNbar )_{fs}$ and note that $f\leq g$ if and only if $\{ f\geq i\}\subseteq \{ g\geq i\}$ for every $i$. Let $z_i, y_i\in S$ be such that $U_{z_i}=\{ f\geq i\}$ and $U_{y_i}=\{ g\geq i\}$

By (i) in \autoref{prp:BasicTop}, $U_{z_i}\subseteq U_{y_i}$ if and only if $z_{i}\leq y_{i}$ for every $i$. Further, note that the sequences $(z_{i})_{i=1}^{m},(y_{i})_{i=1}^{m}$ are both decreasing. Since we have a topological order, $z_{i}\leq y_{i}$ for every $i$ if and only if 
\[
 \sum_{i=1}^{m} z_{i}\leq \sum_{i=1}^{m} y_{i},
\]
where note that the right and left hand side correspond to $\varphi'(f)$ and $\varphi'(g)$ respectively.
\end{proof}

Using $\varphi '$, one can now construct a $\Cu$-isomorphism.

\begin{thm}\label{thm:phiiso}
Let $S$ be an $\Lsc$-like \CuSgp{}. Then, the \CuMor{} $\varphi '$ extends to a $\Cu$-isomorphism.
\end{thm}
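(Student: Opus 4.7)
The plan is to extend $\varphi'$ to a \CuMor{} $\varphi$ on all of $\Lsc (X_S,\NNbar )$ and then apply \autoref{iso_red}. The extension is defined by approximation: given $f\in \Lsc (X_S,\NNbar )$, set $\varphi (f):=\sup_n \varphi'(f_n)$, where $(f_n)_n$ is a $\ll$-increasing sequence in $\Lsc (X_S,\NNbar )_{fs}$ with supremum $f$. Such sequences exist by the explicit construction carried out in the proof of \autoref{LscXS_O2}, which in fact yields approximating elements of the form $\sum_{i=1}^{k} \chi_{U_{y_{i,n_{i,k}}}}$ lying in $\Lsc (X_S,\NNbar )_{fs}$.

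The first technical step is to check that $\varphi'$ preserves the way-below relation within $\Lsc (X_S,\NNbar )_{fs}$. Given $f=\sum_{i=1}^{m} \chi_{U_{z_i}}$ and $g=\sum_{i=1}^{m} \chi_{U_{y_i}}$ (padding with zeros so both sums have the same length) with $f\ll g$, \autoref{wayb_decomp} yields $\chi_{U_{z_i}}\ll \chi_{U_{y_i}}$ for every $i$, and \autoref{wayb_comp} then gives $z_i\ll y_i$ in $S$. Iterated use of \axiomO{3} now shows $\sum_{i} z_i\ll \sum_{i} y_i$, which is exactly $\varphi'(f)\ll \varphi'(g)$. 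This preservation ensures that $\varphi$ is well-defined: if $f=\sup_n f_n=\sup_m g_m$ with both sequences $\ll$-increasing in $\Lsc (X_S,\NNbar )_{fs}$, then $f_n\ll f$ forces $f_n\leq g_m$ for some $m$, so $\varphi'(f_n)\leq \varphi'(g_m)$ by the monotonicity of $\varphi'$, and by symmetry both suprema coincide.

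Next, I verify that $\varphi$ is a \CuMor{}. Monotonicity and preservation of suprema of increasing sequences follow by a routine cofinality argument from the definition of $\varphi$ as a sup of values of $\varphi'$. Additivity reduces, via \axiomO{4}, to the additivity of $\varphi'$ established above: if $f_n\nearrow f$ and $g_n\nearrow g$ are $\ll$-increasing in $\Lsc (X_S,\NNbar )_{fs}$, then $f_n+g_n$ is $\ll$-increasing with supremum $f+g$, whence
\[
  \varphi (f+g)=\sup_n \varphi'(f_n+g_n)=\sup_n \bigl( \varphi'(f_n)+\varphi'(g_n)\bigr) = \varphi (f)+\varphi (g).
\]
Preservation of $\ll$ is immediate: if $f\ll g$, the identity $g=\sup_n g_n$ yields some $n$ with $f\leq g_n$, so $\varphi (f)\leq \varphi'(g_n)\ll \varphi'(g_{n+1})\leq \varphi (g)$ by the first step.

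Finally, I apply \autoref{iso_red}. Condition (i) holds because $\Lsc (X_S,\NNbar )_{fs}$ is a basis of $\Lsc (X_S,\NNbar )$ in view of \autoref{LscXS_O2}, and on this basis $\varphi$ coincides with $\varphi'$, which is an order embedding by the preceding lemma. Condition (ii) follows from the observation that $\varphi (\chi_{U_z})=z$ for every $z\leq e$, so by additivity every finite sum of elements of $\downarrow e$ lies in the image of $\varphi$; such sums are sup-dense in $S$ by axiom (C2) of \autoref{dfn:LscLikeCuSgp}, giving a basis of $S$ inside $\varphi(\Lsc (X_S,\NNbar ))$. The main hurdle is the preservation of the way-below relation by $\varphi'$ on $\Lsc (X_S,\NNbar )_{fs}$, since everything else---well-definedness, additivity, preservation of $\ll$, and the two conditions of \autoref{iso_red}---flows from it in a fairly mechanical way.
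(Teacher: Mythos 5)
Your proposal is correct and follows essentially the same route as the paper: define $\varphi$ on all of $\Lsc (X_S,\NNbar )$ by suprema over $\ll$-increasing sequences from $\Lsc (X_S,\NNbar )_{fs}$, check well-definedness, additivity, and $\ll$-preservation via \autoref{wayb_decomp}, \autoref{wayb_comp} and \axiomO{3}, and conclude with \autoref{iso_red}. The only point you assert without argument is that $\varphi$ actually restricts to $\varphi'$ on $\Lsc (X_S,\NNbar )_{fs}$ (equivalently, that $\varphi (\chi_{U_z})=z$), which is the content of the paper's Claim~2 and still needs the short computation $\sup_n\chi_{U_{z_n}}=\chi_{U_{\sup_n z_n}}$ from the proof of \autoref{topology}; with that supplied, everything closes.
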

\begin{proof}
We will need the following claims.

\noindent\emph{Claim 1.} \textit{Let $(f_{n})_{n}$ and $(g_{n})_{n}$ be two $\ll$-increasing sequence with the same supremum in $\Lsc (X_S ,\NNbar )$. Then, $\sup_{n}\varphi'(f_{n})=\sup_{n}\varphi'(g_{n})$.}

Since $(f_{n})_{n}$ and $(g_{n})_{n}$ have the same supremum, we know that for every $n$ there exist $k,m$ such that $f_{n}\leq g_{m}$ and $g_{n}\leq f_{k}$.
 
 Applying $\varphi'$, we get
 \[
   \varphi'(f_{n})\leq \varphi'(g_{m})\leq \sup_{m}\varphi'(g_{m}),\andSep
   \varphi'(g_{n})\leq \varphi'(f_{k})\leq \sup_{k}\varphi'(f_{k}),
 \]
and so $\sup_{k}\varphi'(f_{k})=\sup_{m}\varphi'(g_{m})$ as desired.
\vspace{0.1cm}

By Claim 1, we can define the map $\varphi\colon \Lsc (X_{S},\NNbar )\rightarrow S$ as $\varphi(f)=\sup_{n}\varphi '(f_{n})$, where $(f_{n})_{n}$ is a $\ll$-increasing sequence with supremum $f$. We will see that $\varphi$ is a \CuMor{} that extends $\varphi '$ and that it satisfies the conditions in \autoref{iso_red} (i.e. $\varphi$ is a $\Cu$-isomorphism).\vspace{0.1cm}

\noindent\emph{Claim 2.} \textit{Let $(f_{n})_{n}$ be an increasing sequence in $\Lsc (X_{S},\NNbar )_{fs}$ with supremum $f=\sup f_n\in \Lsc (X_{S},\NNbar )_{fs}$. Then, we have $\varphi'(f)=\sup_{n}\varphi'(f_{n})$.}

To prove the claim, let $f_{n}=\chi_{U_{z_{n}}}$ for every $n$, and recall that $\sup_{n}\chi_{U_{z_{n}}}=\chi_{U_{\sup_{n}(z_{n})}}$. This is equivalent to 
 \[
  \varphi'(\sup_{n}f_{n})=\sup_{n}(z_{n})=\sup_{n}\varphi'(f_{n})
 \]
Now, given any increasing sequence as in the statement of the lemma with supremum $f$, we know that $\sup (f)<\infty$, say $\sup (f)=m\in\NN$.

Thus, given $U_{i,n}=\{f_{n}\geq i\}$ for $1\leq i\leq m$, we can write
\[
 f_{n}=\sum_{i=1}^{m} \chi_{U_{i,n}}
\]
with some possibly empty $U_{i,n}$'s.

We have
\[
 f=\sup_{n} (f_{n})=\sum_{i=1}^{m} \chi_{\cup_{n}U_{i,n}},
\]
where $\cup_{n}U_{i,n}=\{ f\geq i\}$.

Using that $\varphi'$ preserves suprema of indicator functions, we have
\[
 \varphi'(f)= \sum_{i=1}^{m} \varphi'(\chi_{\cup_{n}U_{i,n}})=
 \sup_{n}\varphi'(\sum_{i=1}^{m}\chi_{U_{i,n}})=\sup_{n}\varphi'(f_{n}),
\]
as required.\vspace{0.1cm}

 Since $\varphi'$ preserves addition and $S$ is a $\Cu$-semigroup, it is clear that $\varphi$ also preserves addition. Note that the proof of the Claim 2 above also shows that $\varphi$ extends $\varphi '$ and that $\varphi$ is order preserving.\vspace{0.1cm}

To see that $\varphi$ preserves suprema, let $\varphi (f)=\sup_{n}\varphi'(f_{n})$ with $(f_{n})_{n}$ $\ll$-increasing with supremum $f$ and consider an increasing sequence $(g_{n})_{n}$ whose supremum is also $f$.

Then, for every $n$ there exists an $m$ with $f_{n}\leq g_{m}$ and, consequently, $\varphi'(f_{n})=\varphi(f_{n})\leq \varphi(g_{m})$. It follows that $\varphi (f)=\sup_{n}\varphi'(f_{n})\leq \sup_{m}\varphi(g_{m})$.

On the other hand, $f\geq g_{m}$ for every $m$, so $\varphi(f)=\sup_{m}\varphi(g_{m})$.\vspace{0.1cm}

Now let $f,g\in \Lsc (X_{S},\NNbar )$ be such that $f \ll g$. Then, we know by \autoref{wayb_decomp} that this happens if and only if
\[
  \chi_{\{ f\geq i\}}\ll \chi_{\{ g\geq i\}} \text{ for every $i$} \andSep \sup (f)=m<\infty.
 \]
 
Letting $y_i, z_i\in S$ such that $U_{y_{i}}= \{ f\geq i\}$ and $U_{z_{i}}= \{ g\geq i\}$, we know that
\[
 \varphi(f)=\varphi'(f)= y_{1}+\cdots +y_{m}\ll z_{1}+\cdots +z_{m}\leq \sup_{n}\sum_{i=1}^{n} z_{i}=\varphi(g).
\]

Finally, note that the image of $\varphi$ is clearly dense in $S$, since $\varphi'$ is surjective on $\downarrow e$. Further, $\varphi$ is an order embedding in $\Lsc (X_{S},\NNbar )_{fs}$, since $\varphi$ coincides with $\varphi'$ in this basis of $\Lsc (X_{S},\NNbar )$.

Thus, since the conditions in \autoref{iso_red} are satisfied, it follows that $\varphi$ is a $\Cu$-isomorphism.
\end{proof}

\begin{thm}\label{LscXS_iff_Lsc_like}
 Let $S$ be a $\Cu$-semigroup. Then, $S$ is $\Lsc$-like if and only if $S$ is $\Cu$-isomorphic to $\Lsc (X,\NNbar )$ for a $T_{1}$ topological space $X$.
\end{thm}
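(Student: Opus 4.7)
My plan is to deduce this theorem almost directly from the work assembled in \autoref{Topological_Lsc_like} and \autoref{Sect_Abst_Char}. For the forward implication, assuming that $S$ is $\Lsc$-like, \autoref{topology} shows that the associated topological space $X_{S}$ is $T_{1}$, \autoref{Lsc_Cu} ensures that $\Lsc(X_{S},\NNbar)$ is indeed a \CuSgp{} (so that the notion of $\Cu$-isomorphism makes sense), and \autoref{thm:phiiso} then provides an explicit $\Cu$-iso\-mor\-phism $\varphi\colon \Lsc(X_{S},\NNbar)\to S$. Thus one may take $X=X_{S}$.

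For the converse, suppose $\psi\colon \Lsc(X,\NNbar)\to S$ is a $\Cu$-isomorphism for some $T_{1}$-space $X$. By \autoref{exa_Lsc_is_Lsc}, the semigroup $\Lsc(X,\NNbar)$ is $\Lsc$-like, so it suffices to verify that $\Lsc$-likeness is preserved under $\Cu$-isomorphism. Since any $\Cu$-isomorphism is in particular a bijective order isomorphism preserving addition, it preserves all existing suprema and infima, the complete distributive lattice structure, and the identity $x+y=(x\vee y)+(x\wedge y)$. Conditions (C1) and (C2) of \autoref{dfn:LscLikeCuSgp} are formulated purely in terms of these data, together with the notions of idempotence, maximality, and sup-density, all of which are invariant under order-preserving bijections compatible with addition. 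In particular, $\psi(\downarrow 1)=\downarrow\psi(1)$ is a topological subset of $S$ whose finite sums are sup-dense, so (C2) transports, and a verbatim check handles (C1).

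The only mildly delicate step is the invariance argument in the converse direction, but since every piece of data entering the definition of $\Lsc$-likeness is either order-theoretic or algebraic, there is no genuine obstacle. I expect the proof to fit in a few lines once \autoref{thm:phiiso}, \autoref{Lsc_Cu}, \autoref{topology}, and \autoref{exa_Lsc_is_Lsc} are cited. Notably, no additional hypothesis on $S$ (such as being countably based or satisfying \axiomO{5}) is needed for either direction.
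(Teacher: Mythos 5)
Your proposal is correct and follows essentially the same route as the paper, which likewise deduces the forward implication from \autoref{thm:phiiso} and the converse from \autoref{exa_Lsc_is_Lsc}. The only difference is that you spell out the (routine but genuinely needed) transfer of $\Lsc$-likeness across a $\Cu$-isomorphism, a step the paper leaves implicit.
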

\begin{proof}
We already know that $\Lsc (X,\NNbar )$ is $\Lsc $-like  whenever it is a \CuSgp{} (see \autoref{exa_Lsc_is_Lsc}), and the converse follows from \autoref{thm:phiiso}.
\end{proof}


In \cite{ThiVil21arX:DimCu} a notion of covering dimension for \CuSgp{s} is introduced. This dimension satisfies many of the expected permanence properties (\cite[Proposition~3.10]{ThiVil21arX:DimCu}), and is related to other dimensions, such as the nuclear dimension of \ca{s} (\cite[Theorem~4.1]{ThiVil21arX:DimCu}) and the  Lebesgue covering dimension (see \cite[Proposition~4.3, Corollary~4.4]{ThiVil21arX:DimCu}).

Using such a notion, one can prove the following.

\begin{thm}\label{thm:dimChar}
 Let $S$ be a \CuSgp{} satisfying \axiomO{5} and let $n\in\NN\cup\{ \infty\}$. Then, $S$ is $\Cu$-isomorphic to $\Lsc (X,\NNbar )$ with $X$ a compact metric space such that $\dim (X)= n$ if and only if $S$ is $\Lsc$-like, countably based,  has a compact order unit, and $\dim (S)=n$.
 
 In particular, a \CuSgp{} $S$ is $\Cu$-isomorphic to the Cuntz semigroup of $C(X)$ with $X$ compact metric and $\dim (X)\leq 1$ if and only if $S$ is $\Lsc$-like, countably based, satisfies \axiomO{5}, has a compact order unit, and $\dim (S)\leq 1$.
\end{thm}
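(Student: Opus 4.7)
The plan is to prove both implications by leveraging the structural equivalence of \autoref{LscXS_iff_Lsc_like} together with the topological characterizations in \autoref{prp:TopProp}, and then to invoke the dimension-theoretic results of \cite{ThiVil21arX:DimCu} relating $\dim (S)$ to the Lebesgue covering dimension of the associated topological space.

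For the forward direction, assume $S \cong \Lsc (X, \NNbar)$ with $X$ compact metric and $\dim (X) = n$. By \autoref{exa_Lsc_is_Lsc}, $\Lsc (X, \NNbar)$ is $\Lsc$-like, and from \autoref{exa:LscSpace} we have $X_S \cong X$. Since $X$ is second countable, countably compact, normal and metric, \autoref{prp:TopProp}(i)--(iii) immediately yield that $S$ is countably based, has a compact order unit (namely $1\in\Lsc (X,\NNbar)$), and is normal. The axiom \axiomO{5} is assumed, and $\dim(S) = \dim(X) = n$ follows from the comparison between the abstract covering dimension of a \CuSgp{} and the classical covering dimension of a space, as in \cite[Proposition~4.3, Corollary~4.4]{ThiVil21arX:DimCu} applied to $\Lsc (X,\NNbar)$.

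For the backward direction, assume $S$ is $\Lsc$-like, countably based, has a compact order unit and satisfies \axiomO{5}, with $\dim (S) = n$. By \autoref{LscXS_iff_Lsc_like}, there is a $\Cu$-isomorphism $S \cong \Lsc (X_S, \NNbar)$ for the $T_1$ space $X_S$ of \autoref{pgr:TopSpaCuSgp}. Now \autoref{prp:TopProp}(v) (which combines countable basedness, compactness of the order unit and \axiomO{5} to pass through normality and Urysohn metrization) gives that $X_S$ is a compact metric space. The identity $\dim (X_S) = \dim (S) = n$ again uses \cite[Proposition~4.3, Corollary~4.4]{ThiVil21arX:DimCu}, this time going from the abstract side to the topological side via the isomorphism $S \cong \Lsc (X_S, \NNbar)$. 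Thus $X := X_S$ witnesses the conclusion.

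The ``in particular'' statement is then a direct combination of the main equivalence with \autoref{pgr:Thom}: for a compact metric space $X$ with $\dim (X) \leq 1$, \cite[Theorem~1.1]{Rob13CuSpDim2} gives $\Cu (C(X)) \cong \Lsc (X, \NNbar)$, so the class of Cuntz semigroups of $C(X)$ with $\dim(X)\leq 1$ is exactly the class of $\Lsc (X,\NNbar)$ for such $X$, to which the theorem with $n\leq 1$ applies. The only genuinely non-routine step is the dimension equality $\dim (S) = \dim (X_S)$; everything else is a bookkeeping exercise tying together results already developed in the preceding sections, so the main obstacle is really just ensuring that the external dimension result is applicable in both directions, which is why the hypothesis \axiomO{5} and countable basedness (ensuring $X_S$ is well-behaved enough for the classical comparison to hold) are both needed.
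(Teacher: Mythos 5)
Your proposal is correct and follows essentially the same route as the paper: both directions reduce to the isomorphism $S\cong\Lsc (X_S,\NNbar )$ from \autoref{LscXS_iff_Lsc_like}, the topological dictionary of \autoref{prp:TopProp} (in particular part (v) for compact metrizability), and the external dimension comparison of \cite[Corollary~4.4]{ThiVil21arX:DimCu}, with the ``in particular'' part handled via \cite[Theorem~1.1]{Rob13CuSpDim2} exactly as in the paper.
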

\begin{proof}
The forward implication follows from Examples  \ref{exa_Lsc_is_Lsc} and \ref{exa:LscSpace}, \autoref{prp:TopProp} and \cite[Corollary~4.4]{ThiVil21arX:DimCu}.

To prove the converse, use \autoref{LscXS_iff_Lsc_like} and (v) in \autoref{prp:TopProp} to deduce that $S\cong\Lsc (X_S ,\NNbar )$ with $X_S$ compact metric. Then, it follows from \cite[Corollary~4.4]{ThiVil21arX:DimCu} that $\dim (X_S)=\dim (S)=n$, as required.

Now assume that $S$ is $\Cu$-isomorphic to the Cuntz semigroup of $C(X)$ with $X$ compact metric and $\dim (X)\leq 1$. By \cite[Theorem~1.1]{Rob13CuSpDim2} we know that $\Cu (C(X))\cong \Lsc (X,\NNbar )$. In particular, $S$ satisfies \axiomO{5} (e.g. \cite{RorWin10ZRevisited}). 

Thus, it now follows from our previous argument that $S$ is $\Lsc$-like, countably based, satisfies \axiomO{5}, has a compact order unit and $\dim (S)\leq 1$.

Conversely, if $S$ satisfies the list of properties in the second part of the statement, note that $S\cong \Lsc (X_S ,\NNbar )$ with $\dim (X_S)\leq 1$ again by \autoref{LscXS_iff_Lsc_like}. Using  \cite[Theorem~1.1]{Rob13CuSpDim2} a second time, it follows that $\Lsc (X_S ,\NNbar )\cong \Cu (C(X_S))$, as desired.
\end{proof}

\section{Chain conditions and the Cuntz semigroup of commutative AI-algebras}\label{Sct_proper}

In this section, we introduce the notions of piecewise chainable and weakly chainable \CuSgp{s} and prove that, together with some additional properties, these notions give a characterization of when $S$ is $\Cu$-isomorphic to the Cuntz semigroup of a unital block stable AI-algebra and a unital AI-algebra respectively; see \autoref{char_piecewise} and \autoref{almost_chainable_Cu}.

We also show that the Cuntz semigroup of any AI-algebra is weakly chainable, thus uncovering a new propery that the Cuntz semigroup of any AI-algebra satisfies; see \autoref{cor:WeakCha}.


We first prove the following categorical proposition, which summarizes the results of the above sections. We denote by $\CatTop$ the category of topological spaces.

\begin{prp}
Let $\mathcal{T}_{1}$ denote the subcategory of $\CatTop$ whose objects are the $T_{1}$ topological spaces, and let $\Lsc$ be the subcategory of $\Cu$ consisting of $\Lsc$-like \CuSgp{s}.

 There exists a faithful and essentially surjective  contravariant functor $T\colon \mathcal{T}_{1}\to \Lsc$ that is full on isomorphisms.
\end{prp}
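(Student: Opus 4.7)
The plan is to define $T$ on objects by $T(X):=\Lsc(X,\NNbar)$, which is an $\Lsc$-like \CuSgp{} by \autoref{exa_Lsc_is_Lsc} (and is indeed a \CuSgp{} by combining \autoref{LscXS_iff_Lsc_like} with \autoref{Lsc_Cu}), and on morphisms by pullback: for a continuous $f\colon X\to Y$ and $g\in \Lsc(Y,\NNbar)$, set $T(f)(g):=g\circ f$. I would then check that $T(f)$ is a positively ordered monoid morphism preserving suprema of increasing sequences (immediate from the pointwise nature of $\Lsc(-,\NNbar)$), and that it preserves the way-below relation via the characterization in \autoref{wayb_decomp}. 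Functoriality and contravariance are then formal.

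Essential surjectivity is a direct consequence of \autoref{LscXS_iff_Lsc_like}: every $\Lsc$-like \CuSgp{} $S$ is $\Cu$-isomorphic to $\Lsc(X_S,\NNbar)=T(X_S)$, where $X_S$ is $T_1$ by \autoref{topology}. For faithfulness, I would exploit the $T_1$ hypothesis on the target: if $f,g\colon X\to Y$ are continuous with $T(f)=T(g)$, then for each $y\in Y$ the singleton $\{y\}$ is closed, so $\chi_{Y\setminus\{y\}}\in\Lsc(Y,\NNbar)$, and the identity $\chi_{Y\setminus\{y\}}\circ f=\chi_{Y\setminus\{y\}}\circ g$ forces $f^{-1}(\{y\})=g^{-1}(\{y\})$; running over $y\in Y$ gives $f=g$.

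The substantive step is fullness on isomorphisms. Given a $\Cu$-isomorphism $\varphi\colon\Lsc(Y,\NNbar)\to\Lsc(X,\NNbar)$, I would first note that $\varphi$ must send the least order unit to the least order unit, and therefore restrict to a lattice isomorphism $\downarrow 1_Y\to\downarrow 1_X$ that induces a bijection on maximal elements below the unit. Using the identifications $X_{\Lsc(Y,\NNbar)}\cong Y$ and $X_{\Lsc(X,\NNbar)}\cong X$ from \autoref{exa:LscSpace}, I would then define a map $h\colon X\to Y$ by requiring $\varphi(\chi_{Y\setminus\{h(x)\}})=\chi_{X\setminus\{x\}}$ for each $x\in X$. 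Continuity of $h$ would be obtained from the topology defined in \autoref{pgr:TopSpaCuSgp} together with part (i) of \autoref{prp:BasicTop}, by computing $h^{-1}(C_z)=C_{\varphi(z)}$ for each $z\leq 1_Y$; by symmetry $h$ would then be a homeomorphism. The equality $T(h)=\varphi$ I would verify first on characteristic functions of open sets through the identity $\mathrm{supp}(\varphi(\chi_U))=h^{-1}(U)$, which follows directly from the defining property of $h$, and then extend it to the whole of $\Lsc(Y,\NNbar)$ using that both maps are \CuMor{s} and that the characteristic functions of open sets generate $\Lsc(Y,\NNbar)$ via finite sums and suprema of $\ll$-increasing sequences.

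The main obstacle will be precisely this reconstruction step: correctly defining the point map $h$ from the abstract $\Cu$-isomorphism $\varphi$ and verifying that pullback along $h$ recovers $\varphi$. The essential tool, already developed in \autoref{pgr:TopSpaCuSgp} and illustrated in \autoref{exa:LscSpace}, is that the points of $X$ can be recovered as the maximal elements below the least order unit of $\Lsc(X,\NNbar)$; once this recovery is firmly in place, the remaining verifications reduce to direct computations with supports of lower-semicontinuous functions.
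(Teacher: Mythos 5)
Your proposal follows essentially the same route as the paper: $T(X)=\Lsc(X,\NNbar)$, $T(f)$ given by pullback (the paper phrases it as the unique \CuMor{} sending $\chi_U\mapsto\chi_{f^{-1}(U)}$, which is the same map), essential surjectivity via \autoref{LscXS_iff_Lsc_like}, and fullness on isomorphisms by recovering points as maximal elements below the least order unit and reading off continuity from $\mathrm{supp}(\varphi(\chi_U))=h^{-1}(U)$, exactly as in the paper's construction of the homeomorphism $f\colon X_T\to X_S$. Your write-up is in fact slightly more explicit than the paper's on two points it leaves implicit (the faithfulness argument via $\chi_{Y\setminus\{y\}}$ and the final verification that $T(h)=\varphi$ on all of $\Lsc(Y,\NNbar)$), but the underlying argument is the same.
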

\begin{proof}
 For every topological space $X\in\mathcal{T}_{1}$, define $T(X)=\Lsc (X,\NNbar )$. 
 
 Also, given any continuous map $f\colon X\to Y$, set $T(f)\colon \Lsc (Y,\NNbar )\to \Lsc (X,\NNbar )$ as the unique $\Cu$-morphism such that $T(f)(\chi_{U})=\chi_{f^{-1}(U)}$ for every open subset $U$ of $Y$.
 
 Note that, given $f\colon X\to Y$ and $g\colon Y\to Z$ in $\mathcal{T}_{1}$, we have 
 \[
 T(g\circ f)(\chi_{U})=\chi_{(g\circ f)^{-1}(U)}=\chi_{f^{-1}g^{-1}(U)}=(T(f)\circ T(g))(\chi_{U}).
 \]

 Thus, $T$ is a contravariant functor, which is clearly faithful by construction.
 
 Moreover, we know by \autoref{LscXS_iff_Lsc_like} that for every $\Lsc$-like \CuSgp{} $S$ there exists a $T_{1}$ space $X_{S}$ with $S\cong \Lsc (X_{S},\NNbar )$. Therefore, $T$ is essentially surjective.
 \vspace{0.1cm}
 
 Now let $\varphi\colon S\to T$ be a $\Cu$-isomorphism of $\Lsc$-like \CuSgp{s}. Using \autoref{LscXS_iff_Lsc_like}, we get a $\Cu$-isomorphism of the form $\phi\colon\Lsc (X_{S},\NNbar )\to \Lsc (X_{T},\NNbar )$.
 
 Since $\phi (1)= 1$, indicator functions must map to indicator functions. Since $\phi $ is a $\Cu$-isomorphism, maximal elements below $1$ must map to maximal elements below $1$. More explicitly, for every $x\in X_{S}$, there exists $y\in X_{T}$ such that $\phi (\chi_{X_{S}\setminus\{ x\}})=\chi_{X_{T}\setminus\{ y\}}$.
 
 We define the map $f\colon X_{T}\to X_{S}$ as $y\mapsto x$, which is bijective because $\phi$ is a $\Cu$-isomorphism.
 
 To see that it is continuous, let $U$ be an open subset of $X_{S}$ and let $V\subset X_{T}$ be such that $\phi (\chi_{U})=\chi_{V}$. Then, given $y\in X_{T}$, we have that $y\in V$ if and only if
 \[
  1\leq \chi_{X_{T}\setminus\{ y\}}+\chi_{V}=
  \phi (\chi_{X_{S}\setminus\{ f(y)\}}+\chi_{U}).
 \]

 Since $\phi$ is a $\Cu$-isomorphism, this in turn holds if and only if $(X_{S}\setminus\{ f(y)\})\cup U=X_{S}$ or, equivalently, if $f(y)\in U$.
 
 This shows that $f^{-1}(U)=V$ and, consequently, that $f$ is continuous.
 
 Finally, let $V\subset X_{T}$ be open. Since $\phi$ is an isomorphism, there exists some open subset $U\subset X_{S}$ such that $\chi_{V}=\phi (\chi_{U})$.
 
 By the argument above, one has $V=f^{-1}(U)$ and, since $f$ is bijective, it follows that $f(V)=f(f^{-1}(U))=U$. This shows that $f$ is open.
 
 Thus, $f$ is a homeomorphism between $X_{S}$ and $X_{T}$, as required.
\end{proof}


We now introduce chainable and piecewise chainable inf-semilattice ordered \CuSgp{s}.

\begin{dfn}\label{dfn:ChaCuSgp}
Let $S$ be an inf-semilattice ordered \CuSgp{}. An element $x\in S$ is said to be \emph{chainable} if for every sum $y_{1}+\cdots +y_{n}\geq x$, there exist elements $z_{1},\cdots ,z_{m}$ such that
 \begin{enumerate}[(i)]
  \item For every $i$ there exists some $k$ with $z_{i}\leq y_{k}$
  \item $z_{i}\wedge z_{j}\neq 0$ if and only if $\vert i-j \vert \leq 1$
  \item $z_{1}+\cdots +z_{m}\geq x$
 \end{enumerate}
 
 $S$ will be called \emph{chainable} if it has a chainable order unit.
 
 Moreover, we will say that $S$ is \emph{piecewise chainable} if there exist chainable elements $s_{1},\cdots ,s_{n}$ such that $s_{1}+\cdots +s_{n}$ is an order unit and $s_{i}\wedge s_{j}=0$ whenever $i\neq j$
\end{dfn}

\begin{lma}\label{Chainability} 
 Given an $\Lsc$-like \CuSgp{} $S$ with least order unit $e$ and an element $y\leq e$, $U_{y}$ is topologically chainable if and only if $y$ is chainable.
 
 In particular, $S$ is chainable if and only if $X_{S}$ is topologically chainable.
\end{lma}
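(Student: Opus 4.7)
The plan is to translate between the sum-based chain conditions in $S$ and the union-based chain conditions among open subsets of $X_S$, using the dictionary established in Sections 3 and 4. First I would record that for $y,z\leq e$ we have $U_y\subseteq U_z$ iff $y\leq z$ (\autoref{prp:BasicTop}(i)), $U_y\cup U_z=U_{y\vee z}$ and $U_y\cap U_z=U_{y\wedge z}$ (as in the proof of \autoref{topology}), and consequently $U_y\cap U_z\neq\emptyset$ iff $y\wedge z\neq 0$. Combined with \autoref{Sum_cup}, this yields the bridging fact that, for $s\leq e$, a sum inequality $y_1+\cdots+y_n\geq s$ implies the join inequality $y_1\vee\cdots\vee y_n\geq s$, the converse being trivial when each $y_i\leq s$. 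I would also note that every finite open cover of the subspace $U_y$ may be refined to one of the form $\{U_{y_1},\ldots,U_{y_n}\}$ with $y_i\leq y$ and $y_1\vee\cdots\vee y_n=y$, by intersecting each member with $U_y$ and invoking $U_a\cap U_y=U_{a\wedge y}$.

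For the forward direction, assume $y$ is chainable and take such a cover $\{U_{y_1},\ldots,U_{y_n}\}$ of $U_y$. Then $y_1+\cdots+y_n\geq\bigvee y_i=y$, and the chainability of $y$ produces $z_1,\ldots,z_m$ with $z_i\leq y_{k(i)}$ for some $k(i)$, with $z_i\wedge z_j\neq 0$ iff $|i-j|\leq 1$, and with $z_1+\cdots+z_m\geq y$. Via the dictionary, $\{U_{z_i}\}$ refines the cover, satisfies the chain adjacency condition, and (invoking \autoref{Sum_cup} once more) covers $U_y$. For the backward direction, assume $U_y$ is topologically chainable and let $y_1+\cdots+y_n\geq y$. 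Setting $y'_i:=y_i\wedge y$, the distributivity of $\downarrow e$ as a complete Heyting algebra (\autoref{prp:Heyting}), together with $\bigvee y_i\geq y$, gives
\[
y=y\wedge(\textstyle\bigvee y_i)=\bigvee(y\wedge y_i)=\bigvee y'_i.
\]
Hence $\{U_{y'_i}\}$ is a finite open cover of $U_y$, which by topological chainability admits a chain refinement $\{U_{z_1},\ldots,U_{z_m}\}$. Translating back through the dictionary yields $z_i\leq y'_{k(i)}\leq y_{k(i)}$, the required adjacency on $z_i\wedge z_j$, and $\bigvee z_i=y$, whence $z_1+\cdots+z_m\geq y$.

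The \emph{in particular} clause follows by specializing to $y=e$: since $U_e=X_S$, topological chainability of $X_S$ is equivalent to chainability of $e$, and $e$ being the least order unit of $S$ makes this equivalent to $S$ being chainable in the sense of \autoref{dfn:ChaCuSgp}. The mild subtlety I anticipate---which I expect to be the main technical point---is ensuring that the existence of \emph{any} chainable order unit $x$ of $S$ (not necessarily equal to $e$) forces $e$ itself to be chainable. My plan is to start from any finite open cover of $X_S$, pad the associated sum inequality $y_1+\cdots+y_n\geq e$ to $x+y_1+\cdots+y_n\geq x$, apply chainability of $x$, and truncate the resulting chain by $\wedge e$; the distributivity of $\downarrow e$ preserves the adjacency relation and reduces the argument to the case already handled in the backward direction.
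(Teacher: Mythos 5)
Your main equivalence (for a fixed $y\leq e$) is correct and follows the paper's argument essentially verbatim: the dictionary $U_{y}\cap U_{z}=U_{y\wedge z}$, $U_{y}\cup U_{z}=U_{y\vee z}$, item (i) of \autoref{prp:BasicTop} and \autoref{Sum_cup} is exactly what the paper uses in both directions (the paper also passes from $y_1+\cdots+y_n\geq y$ to $(y_1\wedge y)\vee\cdots\vee(y_n\wedge y)=y$ as you do; note that finite distributivity of the lattice already suffices here, so invoking the full Heyting property of \autoref{prp:Heyting} is harmless but not needed).

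The gap is in your reduction for the ``in particular'' clause. You correctly isolate the real issue --- chainability of $S$ only provides \emph{some} chainable order unit $x$, not that $e$ itself is chainable --- but the padding $x+y_1+\cdots+y_n\geq x$ does not resolve it. Applying \autoref{dfn:ChaCuSgp} to that padded sum, condition (i) only guarantees that each link $z_i$ of the resulting chain lies below one of the summands $x,y_1,\ldots,y_n$; a link with $z_i\leq x$ but $z_i\not\leq y_k$ for every $k$ is entirely possible, and after truncation such a link only satisfies $z_i\wedge e\leq x\wedge e=(\infty x)\wedge e=\infty\wedge e=e$ (using \autoref{lma:IntInf} and that $x$ is an order unit), which carries no refinement information. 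Hence the truncated chain need not refine the given cover of $X_S$, and you cannot discard the offending links without destroying conditions (ii) and (iii). The paper avoids introducing an extraneous summand: from $y_1+\cdots+y_n\geq e$ one gets $\infty y_1+\cdots+\infty y_n\geq\infty e=\infty=\infty x\geq x$, so chainability of $x$ is applied to a sum whose only summands are the $\infty y_k$; every link then satisfies $z_i\leq\infty y_k$ for some $k$, whence $z_i\wedge e\leq(\infty y_k)\wedge e=y_k\wedge e\leq y_k$ by \autoref{lma:IntInf}, and \autoref{Sum_cup} applied to $z_1+\cdots+z_m\geq x\geq e$ followed by intersecting with $e$ recovers condition (iii) for the truncated links. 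Replacing your padding step with this inflation of the $y_k$ closes the gap.
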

\begin{proof}
If $y$ is chainable, take a finite cover $U_{y_{1}}\cup\cdots\cup U_{y_{n}}=U_{y}$. We have that $y=y_{1}\vee\cdots\vee y_{n}\leq y_{1}+\cdots +y_{n}$. Thus, applying the chainability of $y$, one gets elements $z_{1}, \cdots , z_{m}$ such that for every $i$ there exists $k$ with $z_{i}\leq y_{k}\leq y$. This shows that $z_1\vee \cdots \vee z_m\leq y$.

By \autoref{Sum_cup} and (iii) in \autoref{dfn:ChaCuSgp}, we have $z_{1}\vee\cdots\vee z_{m}\geq y$ and, consequently, $z_1\vee \cdots \vee z_m= y$. This shows that $U_{z_{1}},\cdots, U_{z_{m}}$ is a cover for $U_y$.

Using the equalities in the proof of \autoref{topology} and conditions (i)-(iii) in \autoref{dfn:ChaCuSgp}, one sees that $U_{z_{1}},\cdots, U_{z_{m}}$ is a chain that refines our original cover in the sense of \autoref{dfn_chainable}.

Conversely, if $U_{y}$ is topologically chainable and we have a sum $y_{1}+\cdots +y_{n}\geq y$, we can apply \autoref{Sum_cup} once again to obtain
\[
 (y_{1}\wedge y)\vee\cdots \vee (y_{n}\wedge y)=y.
\]

This shows that $U_{y_{1}\wedge y}\cup\cdots\cup U_{y_{n}\wedge y}=U_y$, and we can use the chainability of $U_{y}$ to obtain a chain refining this cover. Using \autoref{topology}, it is easy to check that the elements below $e$ corresponding to the open subsets of the chain satisfy conditions (i)-(iii) in \autoref{dfn:ChaCuSgp}.

 In particular, the previous argument shows that $e$ is chainable whenever $X_{S}$ is topologically chainable. By definition, this implies that $S$ is chainable.
 
 Conversely, if $S$ is chainable, we have a chainable order unit $s$. Let us now show that $e$ is also chainable, which by the above arguments will imply that  $U_e=X_{S}$ is topologically chainable.
 
 Thus, let $y_1+\cdots +y_n\geq e$, which by \autoref{Sum_cup} implies that $y_1\vee\cdots\vee y_n\geq e$. Since $s$ is an order unit, one has
 \[
  \infty y_1\vee\cdots\vee \infty y_n\geq \infty e = \infty = \infty s\geq s.
 \]

 Using that $s$ is chainable, we obtain elements $z_1,\cdots ,z_m$ satisfying (i)-(iii) in \autoref{dfn:ChaCuSgp}. In particular, since for every $i$ there exists $k$ with $z_i\leq \infty y_k$, one can use \autoref{lma:IntInf} in the second step to get
 \[
  z_i\wedge e\leq (\infty y_k)\wedge e= y_k\wedge e\leq y_k
 \]

 Further, since $e$ is the least order unit in $S$ and $z_1+\cdots +z_m\geq s\geq e$, it follows from \autoref{Sum_cup} that $z_1\vee \cdots \vee z_m\geq e$. Taking the infimum by $e$ and using \autoref{Sum_cup} once again, we get $z_1\wedge e+\cdots +z_m\wedge e\geq e$.
 
 This shows that the elements $z_i\wedge e$ satisfy conditions (i)-(iii) in \autoref{dfn:ChaCuSgp} for $y_1\vee\cdots\vee y_n\geq e$, as desired.
\end{proof}

\begin{lma}\label{P_W_Chainable}
 A countably based $\Lsc$-like \CuSgp{} $S$ with a compact order unit is piecewise chainable if and only if $X_{S}$ is.
\end{lma}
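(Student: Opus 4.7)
The plan is to use the correspondence $y\leftrightarrow U_y$ between elements of $\downarrow e$ and open subsets of $X_S$ (coming from the $\Cu$-isomorphism of \autoref{thm:phiiso}) to transport the algebraic decomposition witnessing piecewise chainability of $S$ into a topological decomposition of $X_S$ and vice versa. Under the stated hypotheses (adding \axiomO{5} so that \autoref{prp:TopProp}(v) applies), $X_S$ is compact metric, and topological chainability of its subspaces coincides with metric chainability by \autoref{Eq_metric_topo}.

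For the forward direction, start from chainable elements $s_1,\dots,s_n$ of $S$ with $s_i\wedge s_j=0$ for $i\ne j$ and $s_1+\dots+s_n$ an order unit, and set $t_i:=s_i\wedge e$. A short computation using \autoref{lma:IntInf}, \autoref{Sum_cup}, and the Heyting structure on $\downarrow e$ (\autoref{prp:Heyting}) gives $t_i\wedge t_j=0$ and $t_1\vee\dots\vee t_n=e$; iterating \autoref{std_formula} then yields $t_1+\dots+t_n=e$, so the open sets $U_{t_i}$ form a clopen partition of $X_S$. To see that each $U_{t_i}$ is topologically chainable, I will show that $t_i$ is itself chainable: given $y_1+\dots+y_k\ge t_i$, pass to $\sum_j\infty y_j\ge\infty t_i=\infty s_i\ge s_i$, apply chainability of $s_i$ to obtain $z_1,\dots,z_l$ satisfying (i)--(iii) of \autoref{dfn:ChaCuSgp}, and set $z'_j:=z_j\wedge e$. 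The identities $(z_j\wedge z_{j'})\wedge e=0 \iff z_j\wedge z_{j'}=0$ (from $\infty a=\infty(a\wedge e)$ in \autoref{lma:IntInf}) and $z'_j\le(\infty y_{k(j)})\wedge e\le y_{k(j)}$ verify conditions (i) and (ii); for (iii), $\sum_j\infty z_j\ge\infty s_i\ge t_i$ together with \autoref{Sum_cup} and meeting with $e$ gives $\bigvee_j z'_j\ge t_i$, whence $\sum_j z'_j\ge t_i$. By \autoref{Chainability} each $U_{t_i}$ is topologically chainable, and by \autoref{rmk:ChainImplDim1} it is a chainable continuum, so $X_S=\bigsqcup_i U_{t_i}$ is piecewise chainable.

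For the converse, write $X_S=X_1\sqcup\dots\sqcup X_n$ with each $X_i$ closed and chainable; by finiteness and compactness each $X_i$ is clopen. Let $s_i\in\downarrow e$ be the unique element with $U_{s_i}=X_i$. Then $U_{s_i}\cap U_{s_j}=\emptyset$ together with \autoref{prp:BasicTop}(iii) give $s_i\wedge s_j=0$ for $i\ne j$; from $\bigcup_i U_{s_i}=X_S$ one obtains $\bigvee_i s_i=e$; and \autoref{std_formula} applied to the pairwise disjoint $s_i$ then yields $\sum_i s_i=e$, an order unit. Each $s_i$ is chainable by \autoref{Chainability}, since $U_{s_i}=X_i$ is topologically chainable by \autoref{Eq_metric_topo}.

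The main technical obstacle is the forward direction's argument that $t_i$ inherits chainability from $s_i$: it requires careful interplay of the $\infty$-absorption identity $\infty a=\infty(a\wedge e)$, the Heyting distributivity of $\downarrow e$, and the translation between sum/join in $S$ and union/intersection in $X_S$, in order to match the combinatorial conditions (i)--(iii) of \autoref{dfn:ChaCuSgp} at the level of the truncated elements $z'_j=z_j\wedge e$.
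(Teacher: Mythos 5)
Your proof is correct and follows essentially the same route as the paper's: both directions rest on \autoref{Chainability} and the correspondence $y\leftrightarrow U_y$ between elements of $\downarrow e$ and open subsets of $X_S$, matching disjoint chainable elements with a disjoint clopen decomposition of $X_S$ into chainable pieces. The paper's own proof is only a few lines and silently elides the two points you work out carefully, namely that the $s_i$ in \autoref{dfn:ChaCuSgp} need not lie below $e$, so one must pass to $t_i=s_i\wedge e$ and check that chainability survives this truncation (your argument mirrors the one used inside the proof of \autoref{Chainability} for the order unit), and that \axiomO{5} (or some metrizability hypothesis on $X_S$) is implicitly needed for the metric notion of piecewise chainability of \autoref{dfn_chainable} to apply --- so your version is, if anything, the more complete one.
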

\begin{proof}
 If $X_{S}$ is piecewise chainable, there exist chainable components $Y_{1},\cdots , Y_{n}$ such that $X_{S}=Y_{1}\sqcup\cdots\sqcup Y_{n}$. Since chainability implies connectedness (whenever the space is compact), there is a finite number of connected components, and so these are clopen.
 
 By \autoref{Chainability}, the disjoint chainable components correspond to disjoint chainable elements, so $S$ is piecewise chainable by definition.
 \vspace{0.1cm}
 
 Conversely, if $S$ is piecewise chainable, each element $s_{i}$ in the definition of chainable corresponds to a chainable open subset of $X_{S}$, which is disjoint from the other chainable open subsets by construction.
\end{proof}

\begin{thm}\label{char_piecewise}
Let $S$ be a \CuSgp{}. Then, $S$ is $\Cu$-isomorphic to the Cuntz semigroup of a unital block-stable commutative AI-algebra if and only if $S$ is countably based, $\Lsc$-like, piecewise chainable, has a compact order unit, and satisfies \axiomO{5}.
\end{thm}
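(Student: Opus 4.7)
The plan is to observe that, by Definition \ref{dfn:PieceWiseChain}, a unital block-stable commutative AI-algebra is exactly an algebra of the form $C(X)$ with $X$ compact metric piecewise chainable, and that the whole equivalence then falls out by assembling the topological translation built up in the preceding sections. So the first move is to recast both sides of the equivalence at the level of the space $X_{S}$, so that the chain conditions on $S$ can be compared directly with those on $X_{S}$ via \autoref{P_W_Chainable}.

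For the forward implication I would start with $A = C(X)$, $X$ compact metric piecewise chainable. Each chainable summand of $X$ has dimension at most one by \autoref{rmk:ChainImplDim1}, so $\dim X\le 1$, and hence $\Cu(A)\cong \Lsc(X,\NNbar)$ by \cite[Theorem~1.1]{Rob13CuSpDim2}. I then verify the five listed properties one at a time: $\Lsc$-likeness from \autoref{exa_Lsc_is_Lsc}; countably based since $A$ is separable (see \autoref{pgr:AdditionalAxioms}); \axiomO{5} from the general Cuntz-semigroup theory (again \autoref{pgr:AdditionalAxioms}); compactness of the order unit $[1_{A}]$ because $X$ is compact; and finally piecewise chainability of $S$ using \autoref{exa:LscSpace} to identify $X_{S}\cong X$ followed by \autoref{P_W_Chainable}.

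For the converse, I would first apply \autoref{LscXS_iff_Lsc_like} together with part (v) of \autoref{prp:TopProp} — whose hypotheses (countably based, compact order unit, \axiomO{5}) are precisely those available — to obtain a $\Cu$-isomorphism $S\cong \Lsc(X_{S},\NNbar)$ with $X_{S}$ a compact metric space. Next, \autoref{P_W_Chainable} translates piecewise chainability of $S$ into piecewise chainability of $X_{S}$, and \autoref{rmk:ChainImplDim1} applied to each chainable component gives $\dim X_{S}\le 1$. A second application of \cite[Theorem~1.1]{Rob13CuSpDim2}, now in the reverse direction, yields $\Lsc(X_{S},\NNbar)\cong \Cu(C(X_{S}))$, and $C(X_{S})$ is a unital block-stable commutative AI-algebra by the very \autoref{dfn:PieceWiseChain}.

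There is no genuine obstacle here beyond careful bookkeeping; the only point to watch is that the abstract notion of piecewise chainability for $S$ in \autoref{dfn:ChaCuSgp} matches the topological notion for $X_{S}$ in both directions, and this is precisely the content of \autoref{P_W_Chainable} (itself powered by the chain-level translation \autoref{Chainability}). Everything else is a direct invocation of results already established in the excerpt.
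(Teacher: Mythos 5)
Your proposal is correct and follows essentially the same route as the paper's own proof: both directions rest on \cite[Theorem~1.1]{Rob13CuSpDim2}, \autoref{LscXS_iff_Lsc_like} together with (v) of \autoref{prp:TopProp}, and the translation of piecewise chainability through \autoref{exa:LscSpace} and \autoref{P_W_Chainable}. The only difference is that you spell out a couple of steps the paper leaves implicit (notably that $\dim X\le 1$ is what licenses the application of Robert's theorem in the forward direction), which is harmless.
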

\begin{proof}
 Let $S$ be $\Cu$-isomorphic to the  Cuntz semigroup of a unital commutative block stable AI-algebra. Then, we know from \cite[Theorem~1.1]{Rob13CuSpDim2} and \autoref{dfn:PieceWiseChain} that $S\cong \Lsc (X,\overline{\NN})$ with $X$ a compact, metric, piecewise chainable space. In particular, $S$ satisfies \axiomO{5}, has a compact order unit, is countably based and $\Lsc$-like. Using \autoref{P_W_Chainable}, it also follows that $S$ is piecewise chainable.
 
 Conversely, assume that $S$ satisfies all the conditions in the list. By \autoref{LscXS_iff_Lsc_like} and (v) in  \autoref{prp:TopProp}, we have $S\cong\Lsc (X,\NNbar )$ with $X$ a compact metric space.
 
 Then, it follows from \autoref{exa:LscSpace} and \autoref{P_W_Chainable} that $X$ is piecewise chainable. In particular, it has dimension less than or equal to one by \autoref{rmk:ChainImplDim1}. Thus, $\Cu (C(X))\cong \Lsc (X,\NNbar )$ by \cite[Theorem~1.1]{Rob13CuSpDim2}, so $S$ is isomorphic to the Cuntz semigroup of a unital commutative block-stable AI-algebra.
\end{proof}

We now define weak chainability for any \CuSgp{} and prove that every \CuSgp{} of an AI-algebra satisfies such a condition. Moreover, we also show that an $\Lsc$-like \CuSgp{} is weakly chainable if and only if its associated space is almost chainable.

Given two elements $x,y$ in a \CuSgp{}, we write $x\propto y$ if there exists $n\in\NN$ with $x\leq ny$.

\begin{dfn}\label{wch}
  We will say that a \CuSgp{} $S$ is \emph{weakly chainable}, or that it satisfies the \emph{weak chainability condition} if, for any $x,y,y_{1},\cdots ,y_{n}$ such that
  \[
  x\ll y\ll y_1+\cdots +y_n,
 \]
there exist $x', z_{1},\cdots ,z_{m}\in S$ such that $x'\leq y$, $x\propto x'$ and
  \begin{enumerate}[(i)]
   \item For any $i$ there exists $j$ such that $z_{i}\leq y_{j}$.
   \item $z_{i}+z_{j}\leq x'$ whenever $\vert i-j\vert \geq 2$.
   \item $z_{1}+\cdots +z_{m}\geq x'$.
  \end{enumerate}
\end{dfn}

\begin{lma}
  Let $X$ be a compact metric space. Then, $\Lsc (X,\NNbar )$ is weakly chainable if and only if $X$ is almost chainable.
 \end{lma}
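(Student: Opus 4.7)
The plan is to prove both directions by transferring between the topological cover structure on $X$ and the algebraic sum structure in $\Lsc(X,\NNbar)$, using compactness of $X$ throughout and the characterization of $\ll$ given in \autoref{wayb_decomp}.

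For the forward direction, suppose $\Lsc(X,\NNbar)$ is weakly chainable. Given $\epsilon>0$, use compactness of $X$ to obtain a finite cover by open balls $B_1,\ldots,B_n$ of diameter less than $\epsilon$, and set $y_j = \chi_{B_j}$ and $x = y = \chi_X$. Since $X$ is compact we have $\chi_X \ll \chi_X$, and \autoref{wayb_decomp} combined with $\chi_X \ll \chi_X = \chi_{\{y_1+\cdots+y_n\geq 1\}}$ gives $y \ll y_1 + \cdots + y_n$. Invoking weak chainability yields $x', z_1, \ldots, z_m$; the conditions $x' \leq \chi_X$ and $\chi_X = x \propto x'$ force $x' = \chi_X$, since $x'$ must be the indicator of the open set $\{x' \geq 1\}$, and $\chi_X \leq kx'$ forces this set to be all of $X$. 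Then condition (i) forces each $z_k$ to be the indicator $\chi_{U_k}$ of an open set $U_k \subseteq B_{j(k)}$ (hence of diameter less than $\epsilon$), condition (ii) becomes disjointness $U_k \cap U_{k'} = \emptyset$ whenever $|k-k'|\geq 2$, and condition (iii) gives $\bigcup_k U_k = X$. Hence $\{U_k\}$ is an $\epsilon$-almost chain covering $X$.

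For the converse, assume $X$ is almost chainable and take $x \ll y \ll y_1 + \cdots + y_n$. The key observation is that one may take $x' = \chi_{\{y \geq 1\}}$: this satisfies $x' \leq y$, and $x \propto x'$ holds because $x$ has finite supremum, say $M$, with $x \leq y$, whence $x \leq M\chi_{\{y \geq 1\}} = Mx'$. Using \autoref{wayb_decomp}, the assumption $y \ll y_1+\cdots+y_n$ translates to $K:=\overline{\{y\geq 1\}}$ being compactly contained in $\bigcup_j \{y_j \geq 1\}$. Apply the Lebesgue number lemma to this open cover of the compactum $K$ to obtain $\delta > 0$, and use almost chainability to extract an $\epsilon$-almost chain $C_1, \ldots, C_m$ covering $X$ with $\epsilon < \delta$. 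For each $k$ with $C_k \cap \{y \geq 1\} \neq \emptyset$, the set $C_k$ meets $K$ and so lies in some $\{y_{j(k)} \geq 1\}$, and we set $z_k = \chi_{C_k \cap \{y \geq 1\}}$; otherwise set $z_k = 0$. Conditions (i)--(iii) then follow: (i) from $z_k \leq \chi_{\{y_{j(k)}\geq 1\}} \leq y_{j(k)}$; (ii) because $|k-k'|\geq 2$ gives $C_k \cap C_{k'} = \emptyset$, making the supports of $z_k$ and $z_{k'}$ disjoint subsets of $\{y\geq 1\}$; and (iii) because the $C_k$ cover $X$, so each $p \in \{y\geq 1\}$ lies in some $C_k$ for which $z_k(p)=1$.

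The one conceptual hurdle is in the converse direction, where a naive attempt to decompose $y$ level-by-level leads to ordering incompatibilities between adjacent almost-chain cells (non-adjacent contributions from overlapping cells can exceed $y$). The simplification that makes the proof go through is the flexibility built into the weak chainability axiom via the auxiliary element $x'$ together with the proportionality relation $x \propto x'$: the higher levels of $y$ are absorbed by the multiplier $M$, so the construction only needs to refine the single cover $\{\{y_j \geq 1\}\}$ of the compact set $\overline{\{y\geq 1\}}$, which is exactly what almost chainability of $X$ delivers.
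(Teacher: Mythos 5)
Your proof is correct, and its overall strategy matches the paper's: in the forward direction you apply weak chainability to $x=y=\chi_X$ and a cover by small balls, force $x'=\chi_X$ from $x'\leq\chi_X\leq kx'$, and read off an almost chain from the $z_i$, exactly as in the paper; in the converse you exploit the slack in \autoref{wch} by taking $x'$ to be a level-one truncation (you use $x'=y\wedge 1=\chi_{\{y\geq 1\}}$, the paper uses $x'=x\wedge 1$; both satisfy $x'\leq y$ and $x\propto x'$) and then intersect an almost chain with its support. The genuine difference is how the converse is implemented. The paper first notes that almost chainability forces $\dim(X)\leq 1$, invokes $\Cu(C(X))\cong\Lsc(X,\NNbar)$ to secure \axiomO{5}, and then uses the abstract lemmas \autoref{Sum_cup} and \autoref{Hausdorff} to turn $x'\ll (y_1\wedge 1)\vee\cdots\vee(y_n\wedge 1)$ into the containment $\overline{U_{x'}}\subseteq U_{y_1}\cup\cdots\cup U_{y_n}$ and to refine the cover $\{X\setminus\overline{U_{x'}},U_{y_1},\cdots,U_{y_n}\}$. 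You instead extract $\overline{\{y\geq 1\}}\subseteq\bigcup_j\{y_j\geq 1\}$ directly from \autoref{wayb_decomp} and the characterization of $\ll$ on indicator functions, and then apply the Lebesgue number lemma; this is more elementary and avoids the detour through \axiomO{5}. One small imprecision: your Lebesgue number controls subsets of $K=\overline{\{y\geq 1\}}$, so it is $C_k\cap K$, not $C_k$ itself, that is guaranteed to lie in some $\{y_{j(k)}\geq 1\}$; since $z_k$ is supported in $C_k\cap\{y\geq 1\}\subseteq C_k\cap K$, this suffices for conditions (i)--(iii), but the sentence as written slightly overstates what the lemma gives. (Alternatively, apply the Lebesgue lemma to the cover of all of $X$ obtained by adjoining $X\setminus K$, which is what the paper's refinement effectively does.)
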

 \begin{proof}
  First, recall that $S=\Lsc (X,\NNbar )$ is a \CuSgp{} whenever $X$ is compact and metric by \autoref{cor:LscXCu}. Further, also recall from \autoref{exa_Lsc_is_Lsc} that $S$ is an $\Lsc$-like \CuSgp{} with least order unit $1$, and that $X\cong X_S$ by \autoref{exa:LscSpace}.
 
  Now assume that $S$ is weakly chainable, and let $U_{y_1},\cdots , U_{y_n}$ be a cover of $X_S$. Then, the elements $y_1,\cdots ,y_n\leq 1$ satisfy $y_1\vee \cdots \vee y_n\geq 1$ and, consequently, $y_1+\cdots +y_n\geq 1$.
  
  Set $x=y=1$, and apply \autoref{wch} to obtain elements $x',z_1,\cdots ,z_m$ satisfying the conditions in the definition. Note that, since $x'$ satisfies $x'\leq 1\leq kx'$ for some $k\in\mathbb{N}$, it follows from the second inequality that $x'\geq 1$ and, therefore, $x'=1$. Let $U_{z_1},\cdots ,U_{z_m}$ be the open subsets of $X_S$ corresponding to $z_1,\cdots ,z_m$ respectively. Using (i)-(iii) in \autoref{wch}, it is easy to see that such sets form an almost chain refining the original cover. This implies that $X_S$ is almost chainable and, since $X\cong X_S$, so is $X$.
  
  Conversely, assume that $X$ is almost chainable and let $x,y,y_{1},\cdots ,y_{n}\in S$ be as in \autoref{wch}. Set $x'=x\wedge 1$. Then, we know by \autoref{Sum_cup} that $y_{1}\vee\cdots\vee y_{n}\geq y\gg x'$. Taking the infimum with $1$, one has $(y_{1}\wedge 1)\vee\cdots\vee (y_{n}\wedge 1)\gg x'$.
  
  Moreover, note that $x'$ satisfies $x'\leq y$ and $x\propto x'$. Since $X$ is compact, metric and almost chainable, we know by \autoref{rmk:ChainImplDim1} that its dimension is less than $2$. This implies by \cite[Theorem~1.1]{Rob13CuSpDim2} that $\Cu (C(X))\cong\Lsc (X,\overline{\mathbb{N}})$ and, in particular, that $S$ satisfies \axiomO{5}.
  
  Thus, by the proof of \autoref{topology} and  \autoref{Hausdorff}, $(y_{1}\wedge 1)\vee\cdots\vee (y_{n}\wedge 1)\gg x'$ corresponds to a cover $\overline{U_{x'}}\subset U_{y_{1}}\cup\cdots\cup U_{y_{n}}$. In particular, the open sets $X\setminus\overline{U_{x'}},U_{y_{1}},\cdots , U_{y_{n}}$ form a cover of $X$ and, since $X$ is almost chainable, there exists an almost chain $C_{1},\cdots , C_{m}$ covering $X$ and refining $X\setminus\overline{U_{x'}},U_{y_{1}},\cdots , U_{y_{n}}$.
  
  Now take the almost chain $C_{1}\cap U_{x'},\cdots , C_{m}\cap U_{x'}$, which clearly covers $U_{x'}$. For each $i$, let $z_{i}\in S$ be the associated element to $C_{i}\cap U_{x'}$. These elements satisfy the desired conditions in \autoref{wch}.
  
  Indeed, to see condition (ii) note that $z_i\leq x'$ for every $i$, so it follows that $z_i+z_j\leq x'$ if and only if $z_i\wedge z_j=0$. By the proof of \autoref{topology}, this is equivalent to $(C_{i}\cap U_{x'})\cap (C_{i}\cap U_{x'})=\emptyset$. Since $\{C_{i}\cap U_{x'}\}_{i}$ is an almost chain, this condition is satisfied.
  
  Conditions (i) and (iii) follow similarly using that $\{C_{i}\cap U_{x'}\}_{i}$ is a cover of $U_{x'}$ refining $\{U_{y_{j}}\}_{j}$.
\end{proof}

Using \autoref{rmk:ChainImplDim1}, one gets the following result. 

\begin{cor}\label{connected_wch}
  Given $X$ compact, metric and connected, the \CuSgp{} $\Lsc (X,\NNbar )$ is weakly chainable if and only if $X$ is chainable.
\end{cor}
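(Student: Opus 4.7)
My plan is to reduce the corollary to the lemma immediately preceding it, which already establishes that $\Lsc(X,\overline{\NN})$ is weakly chainable if and only if $X$ is almost chainable (for $X$ compact metric). Since $X$ is given to be a continuum (compact, metric, connected), it suffices to show: \emph{for such $X$, almost chainability and chainability coincide}. Recall from \autoref{rmk:ChainImplDim1} that chainability forces connectedness when $X$ is compact, so the hypothesis of the corollary is the natural setting for chainability to make sense.

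The direction chainable $\Rightarrow$ almost chainable is immediate: the condition ``$C_{i}\cap C_{j}\neq\emptyset$ iff $\vert i-j\vert\leq 1$'' in a chain implies in particular that $C_{i}\cap C_{j}=\emptyset$ whenever $\vert i-j\vert\geq 2$, so any $\epsilon$-chain covering $X$ is an $\epsilon$-almost chain covering $X$.

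For the converse, fix $\epsilon>0$ and let $C=\{C_{1},\ldots,C_{k}\}$ be an $\epsilon$-almost chain covering $X$. First, I would discard any $C_{i}$ that happens to be empty and reindex to obtain $\{D_{1},\ldots,D_{m}\}$. This is still an $\epsilon$-almost chain covering $X$: the mesh can only decrease, the union is unchanged, and if $D_{j},D_{l}$ with $|j-l|\geq 2$ correspond to $C_{f(j)},C_{f(l)}$ under the strictly increasing reindexing $f$, then $|f(j)-f(l)|\geq|j-l|\geq 2$, so $D_{j}\cap D_{l}=C_{f(j)}\cap C_{f(l)}=\emptyset$. Now every $D_{i}$ is non-empty.

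The key step is to use connectedness to upgrade this almost chain to a chain: suppose, toward a contradiction, that $D_{i}\cap D_{i+1}=\emptyset$ for some $i\in\{1,\ldots,m-1\}$. Set $A=D_{1}\cup\cdots\cup D_{i}$ and $B=D_{i+1}\cup\cdots\cup D_{m}$. Both $A$ and $B$ are non-empty (each $D_{j}$ is) and open, and $A\cup B=X$. Moreover $A\cap B=\emptyset$: for any $j\leq i$ and $l\geq i+1$, either $l-j\geq 2$, in which case $D_{j}\cap D_{l}=\emptyset$ by the almost chain property, or $l-j=1$, in which case $j=i$, $l=i+1$ and $D_{j}\cap D_{l}=\emptyset$ by assumption. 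This disconnects $X$, contradicting connectedness. Therefore $D_{i}\cap D_{i+1}\neq\emptyset$ for every $i$, so $\{D_{1},\ldots,D_{m}\}$ is an $\epsilon$-chain covering $X$. Hence $X$ is chainable.

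There is no real obstacle here; the argument is elementary topology, and the only mild care needed is the bookkeeping when deleting empty cells and reindexing, which preserves both the almost chain disjointness condition and the covering property.
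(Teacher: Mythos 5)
Your proof is correct and follows the same route the paper intends: reduce to the preceding lemma (weak chainability of $\Lsc(X,\NNbar)$ $\Leftrightarrow$ almost chainability of $X$) and then observe that for a continuum almost chainability and chainability coincide. The paper leaves that last equivalence implicit behind a citation of \autoref{rmk:ChainImplDim1}, whereas you supply the missing disconnection argument (discard empty cells, then note that a gap between consecutive cells would split $X$ into two disjoint non-empty open sets), which is exactly the right justification.
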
 

It would be interesting to know whether a general $\Lsc$-like \CuSgp{} $S$ is weakly chainable if $X_S$ is almost chainable.

\begin{lma}\label{lma:WeakChSums}
 Given two weakly chainable \CuSgp{s}  $S$ and $T$, their direct sum $S\oplus T$ is also weakly chainable.
\end{lma}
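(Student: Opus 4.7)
The plan rests on the fact that in $S\oplus T$ both the order and the way-below relation are defined componentwise, so the hypothesis $(x_S,x_T)\ll(y_S,y_T)\ll \sum_{i=1}^{n}(y_{i,S},y_{i,T})$ splits into two componentwise weak chainability hypotheses, one in each factor. Applying \autoref{wch} separately in $S$ and $T$ yields witnesses $x'_S,\,z_{1,S},\dots,z_{m_S,S}$ in $S$ and $x'_T,\,z_{1,T},\dots,z_{m_T,T}$ in $T$.

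With these at hand, I would take $x':=(x'_S,x'_T)$, so that $x'\leq y$ is immediate and $x\propto x'$ follows with constant $\max(k_S,k_T)$, where $k_S,k_T$ witness $x_S\propto x'_S$ and $x_T\propto x'_T$. I would then build the chain in the direct sum by concatenating the factor chains via zero-padding in the opposite coordinate: set $\tilde z_i:=(z_{i,S},0)$ for $i=1,\dots,m_S$ and $\tilde z_{m_S+j}:=(0,z_{j,T})$ for $j=1,\dots,m_T$. Condition (i) is inherited since each $\tilde z_i$ is coordinate-wise dominated by one of the $(y_{k,S},y_{k,T})$, and (iii) follows by summing componentwise. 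For condition (ii), same-side pairs inherit their bound directly from the factor chains.

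The main obstacle is condition (ii) for the cross pairs $\tilde z_i+\tilde z_{m_S+j}=(z_{i,S},z_{j,T})$ at index distance at least two: this reduces to the \emph{individual} inequalities $z_{i,S}\leq x'_S$ and $z_{j,T}\leq x'_T$. For any index lying in a factor chain of length at least four, the individual bound drops out of (ii) of the factor by monotonicity: pick any partner $i'$ in the same chain with $|i-i'|\geq 2$, then $z_i\leq z_i+z_{i'}\leq x'$. The short-chain exceptions (middle index of a chain of length three, or a chain of length one or two) must be dispatched by a preliminary reduction upgrading each factor chain to one in which every $z_i$ sits individually below $x'$. For $m_S=1$, this is automatic: $x'_S\leq z_{1,S}\leq y_{j,S}$ for some $j$, so $z_{1,S}$ may simply be replaced by $x'_S$. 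For $m_S\in\{2,3\}$, I would invoke \axiomO{2} to interpolate a finer $\ll$-refinement and apply weak chainability to the refined data so as to force the chain to have length at least four. Once every factor-chain element satisfies the individual bound, all cross-pair instances of (ii) in the concatenated chain reduce to componentwise inequalities that are automatic, completing the verification.
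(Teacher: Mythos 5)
Your overall strategy is the same as the paper's: split the data componentwise, apply weak chainability in each factor, and concatenate the two resulting chains after padding with zeros in the opposite coordinate. The verification of $x'\leq y$, of $x\propto x'$, and of conditions (i), (iii) and the same-side instances of (ii) all match the paper's argument and are correct. You have also correctly located the one delicate point: a cross pair $\tilde z_i+\tilde z_{m_S+j}=(z_{i,S},z_{j,T})$ at index distance at least two is dominated by $x'=(x'_S,x'_T)$ precisely when the \emph{individual} bounds $z_{i,S}\leq x'_S$ and $z_{j,T}\leq x'_T$ hold, and these are not among the requirements of \autoref{wch}. (The paper's own proof simply asserts $z_i=(z_{i,1},0)\leq (x'_1,0)$ at this step; in the constructions the paper actually performs --- for $\Lsc$-like semigroups the chain elements are cut down to $U_{x'}$ --- the bound does hold, but it is not a formal consequence of the definition.) Your derivation of the individual bound from condition (ii) when the factor chain has length at least four, and your replacement of $z_{1,S}$ by $x'_S$ when $m_S=1$, are both valid.

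The gap is the reduction for factor chains of length $2$ or $3$. Weak chainability is an existential statement: given $x\ll y\ll y_1+\cdots+y_n$ it produces \emph{some} $m$ and \emph{some} chain, and neither \autoref{wch} nor \axiomO{2} gives any control over $m$. Interpolating a finer $\ll$-refinement of the input and reapplying weak chainability may perfectly well return a chain of length $2$ again, so you cannot ``force the chain to have length at least four''; and padding a short chain with zeros does not help either, because condition (ii) for a pair $(z_i,0)$ at distance at least $2$ is exactly the individual bound $z_i\leq x'$ you are trying to establish. As it stands, the middle element of a length-three chain and both elements of a length-two chain carry no upper bound by $x'$ at all --- conditions (i) and (iii) only bound them by the $y_j$'s or from below --- so the cross-pair instances of (ii) remain unverified in these cases. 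To close the argument one needs either to strengthen the definition of weak chainability so that the witnesses satisfy $z_i\leq x'$ (which all the witnesses constructed elsewhere in the paper do), or a genuinely different device for short chains; the refinement step as you describe it does not supply one.
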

\begin{proof}
 Take $x,y,y_{1},\cdots, y_{n}\in S\oplus T$ as in \autoref{wch}. Write $x=(x_{1},x_{2})$, $y=(y_{1},y_{2})$ and $y_{j}= (y_{j,1},y_{j,2})$ with $x_{1},y_{1},y_{j,1}\in S$ and $x_{2},y_{2},y_{j,2}\in T$.

Since $S$ and $T$ are weakly chainable, one gets elements $x'_{1}$, $z_{1,1}$, $\cdots$ , $z_{m,1}\in S$ and $x'_{2}$, $z_{1,2}$, $\cdots$, $z_{m',2}\in S$ satisfying the conditions in  \autoref{wch}. Define $x'=(x'_{1},x'_{2})$ and note that $x'\leq y$ and that there exists some $k\in\mathbb{N}$ with  $x\leq kx'$.

Now set $z_{i}=(z_{i,1},0)$ for $i\leq m$ and $z_{i}=(0,z_{i-m+1,2})$ for $i> m$. We have that:
\[
\begin{split}
 z_{1}+\cdots +z_{m+m'-1}&= ((z_{1,1},0)+\cdots +(z_{m,1},0)) + ((0,z_{1,2})+\cdots +(0,z_{m',2}))\\
 &\geq (x'_{1},0) + (0,x'_{2})=x'.
 \end{split}
\]

As expected, we also get that for every $i$ there exists a $j$ such that $z_{i}\leq (y_{j,1},0)\leq y_{j}$ or $z_{i}\leq (0,y_{j,2})\leq y_{j}$.

Now take $z_{i},z_{j}$ with $\vert i-j\vert \geq 2$. If $i,j\leq m$, we have $z_{i}+z_{j}\leq (x'_{1},0)\leq x'$. Similarly, $z_{i}+z_{j}\leq (0,x'_{2})\leq x'$ whenever $i,j> m$.

Moreover, if $i\leq m$ and $j>m$, we know that $z_{i}=(z_{i,1},0)\leq (x'_{1},0)$ and $z_{j}=(0,z_{j-m+1,2})\leq (0,x'_{2})$. This implies $z_{i}+z_{j}\leq (x'_{1},0) + (0,x'_{2})=x'$.

Since $z_{1},\cdots , z_{m+m'-1}$ satisfy all the required properties, $S$ satisfies the weak chainability condition.
\end{proof}

\begin{prp}\label{prp:Wchindlim}
 The weak chainability condition goes through inductive limits.
\end{prp}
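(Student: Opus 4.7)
The plan is to lift the data defining weak chainability in $S=\varinjlim_\lambda S_\lambda$ to a single stage $S_\lambda$, apply the hypothesis there, and push the resulting witnesses back to $S$. Let $\varphi_\lambda\colon S_\lambda\to S$ denote the canonical $\Cu$-morphisms, and fix $x\ll y\ll y_1+\cdots+y_n$ in $S$. To create enough slack for the lifting step, I would first use \axiomO{2} to pick $x\ll x_1\ll x_2\ll y$; then, since each $y_\ell$ is the supremum of an $\ll$-increasing sequence, I would use \axiomO{3}--\axiomO{4} to choose $y_\ell''\ll y_\ell'\ll y_\ell$ with $x_2\ll y_1''+\cdots+y_n''$ and $y_1''+\cdots+y_n''\ll y_1'+\cdots+y_n'$.

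Next, I would lift these elements to a common stage of the inductive system. Standard techniques for inductive limits in $\Cu$ (cf.\ the arguments in \cite[Section~3]{AntoPereThie18}) allow way-below relations holding in the limit to be detected at finite stages. Concretely, I would lift $x_1\ll x_2$ to $\tilde{x}\ll\tilde{y}$ in some $S_\mu$ with $x_1\leq\varphi_\mu(\tilde{x})$ and $\varphi_\mu(\tilde{y})\leq y$; lift each $y_\ell'\ll y_\ell$ to $\tilde{y}_\ell\in S_{\mu_\ell}$ with $\varphi_{\mu_\ell}(\tilde{y}_\ell)\leq y_\ell$; and then pass to a common index $\lambda$ above $\mu$ and every $\mu_\ell$, identifying each element with its image under the connecting maps. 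Using the interpolation $x_2\ll y_1''+\cdots+y_n''$ from the first step and, if necessary, enlarging $\lambda$ a little further, one can also arrange $\tilde{y}\ll\tilde{y}_1+\cdots+\tilde{y}_n$ inside $S_\lambda$.

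Since $S_\lambda$ is weakly chainable, applying \autoref{wch} to $\tilde{x}\ll\tilde{y}\ll\tilde{y}_1+\cdots+\tilde{y}_n$ produces $\tilde{x}',\tilde{z}_1,\ldots,\tilde{z}_m\in S_\lambda$ with $\tilde{x}'\leq\tilde{y}$, $\tilde{x}\leq k\tilde{x}'$ for some $k\in\NN$, each $\tilde{z}_i\leq\tilde{y}_{j(i)}$, $\tilde{z}_i+\tilde{z}_j\leq\tilde{x}'$ whenever $|i-j|\geq 2$, and $\tilde{z}_1+\cdots+\tilde{z}_m\geq\tilde{x}'$. Setting $x':=\varphi_\lambda(\tilde{x}')$ and $z_i:=\varphi_\lambda(\tilde{z}_i)$, applying $\varphi_\lambda$ to each of these inequalities immediately yields $x'\leq y$, $x\leq kx'$ (so $x\propto x'$), $z_i\leq y_{j(i)}$, $z_i+z_j\leq x'$ whenever $|i-j|\geq 2$, and $z_1+\cdots+z_m\geq x'$, which are precisely the conditions of \autoref{wch} for $x,y,y_1,\ldots,y_n$ in $S$.

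I expect the main obstacle to lie in the lifting step: arranging both $\tilde{x}\ll\tilde{y}$ and $\tilde{y}\ll\tilde{y}_1+\cdots+\tilde{y}_n$ at the same stage $S_\lambda$ while preserving $\varphi_\lambda(\tilde{y}_\ell)\leq y_\ell$. Lifting a single way-below pair is standard, but lifting the summed relation together with the individual upper bounds $y_\ell$ requires careful bookkeeping with the intermediate interpolants $y_\ell''\ll y_\ell'$ and, if needed, further passages to larger indices so that finite-stage $\ll$ relations persist under the connecting maps.
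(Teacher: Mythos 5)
Your proposal is correct and follows essentially the same route as the paper: lift the configuration $x\ll y\ll y_1+\cdots+y_n$, together with suitable interpolants, to a single finite stage of the inductive system so that the way-below relations hold there, apply weak chainability at that stage, and push the resulting witnesses forward along the canonical $\Cu$-morphism, which preserves all the required inequalities. The paper is merely terser about the lifting step (it simply asserts the existence of the stage $m$ and the elements $u,v,v_j$ with $u\ll v\ll v_1+\cdots+v_n$), whereas you spell out the interpolation bookkeeping explicitly.
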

\begin{proof}
 Let $S=\lim S_{n}$ with $S_{n}$ weakly chainable for every $n$. Given an element $x\in S_n$, let us denote its image through the canonical map $S_n\to S$ by $[x]$.
 
 Let $x,y,y_{1},\cdots ,y_{n}\in S$ be as in \autoref{wch}. Then, let $m\in\NN$ be such that there exist elements $u, v$ and $v_j$ in $S_m$ with $[v_j]\ll y_j$,
 \[
 x\ll [u]\ll [v]\ll y\ll [v_1]+\cdots +[v_n]\ll y_1+\cdots +y_n,
 \]
and $u\ll v\ll v_1+\cdots +v_n$.

Since $S_{m}$ is weakly chainable, we obtain elements $u', z_{1},\cdots ,z_{m}\in S_{m}$ satisfying the conditions in \autoref{wch}.  We have:
\begin{enumerate}[(i)]
  \item $u'\leq v , u\leq ku'$ for some $k\in\mathbb{N}$. This implies $ [ u' ]\leq [ v ]\leq y$ and $x\leq [ u ]\leq k [ u' ]$.
   \item For any $i$ there exists $j$ such that $z_{i}\leq v_{j}$, which shows that $[ z_{i} ]\leq [ v_{j} ] \leq y_j$.
   \item $z_{i}+z_{j}\leq u'$ whenever $\vert i-j\vert \geq 2$. Consequently, $[ z_{i} ] + [ z_{j} ]\leq [ u' ]$ whenever $\vert i-j\vert \geq 2$.
\end{enumerate}
  Since $z_{1}+\cdots +z_{m}\geq u'$, one also gets $ [u']\leq [z_{1}]+\cdots + [z_{m}]$. Thus, $S$ is weakly chainable, as desired.
\end{proof}

\begin{cor}\label{cor:WeakCha}
 The Cuntz semigroup of any AI-algebra is weakly chainable.
\end{cor}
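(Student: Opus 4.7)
The plan is to reduce to the building blocks and then apply the two closure properties already established. Given an AI-algebra $A$, write $A \cong \lim_n (C[0,1] \otimes F_n)$ with each $F_n$ finite dimensional. Since the functor $\Cu$ is continuous with respect to inductive limits (see the discussion after the definition of $\Cu$-morphisms in the preliminaries), we have $\Cu(A) \cong \lim_n \Cu(C[0,1] \otimes F_n)$ in the category $\Cu$.

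Next I would analyze a single building block. Writing $F_n \cong M_{k_1} \oplus \cdots \oplus M_{k_r}$, one has $C[0,1] \otimes F_n \cong \bigoplus_{i=1}^r C([0,1], M_{k_i})$, and therefore $\Cu(C[0,1] \otimes F_n) \cong \bigoplus_{i=1}^r \Cu(C([0,1], M_{k_i}))$. Each summand $\Cu(C([0,1], M_{k_i}))$ is isomorphic to $\Cu(C[0,1]) \cong \Lsc([0,1], \overline{\NN})$ by stable isomorphism invariance of the Cuntz semigroup together with \cite[Theorem~1.1]{Rob13CuSpDim2}. Since $[0,1]$ is a compact, metric, chainable continuum, \autoref{connected_wch} applies and $\Lsc([0,1], \overline{\NN})$ is weakly chainable.

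Now I combine the pieces. Since each summand $\Lsc([0,1], \overline{\NN})$ is weakly chainable, \autoref{lma:WeakChSums} applied inductively shows that the finite direct sum $\Cu(C[0,1] \otimes F_n)$ is weakly chainable. Finally, invoking \autoref{prp:Wchindlim} on the inductive system $\bigl( \Cu(C[0,1] \otimes F_n) \bigr)_n$ yields that $\Cu(A)$ itself is weakly chainable.

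The argument is essentially a routine assembly, so no single step is really hard; the only point that merits care is making sure that the building-block identification $\Cu(C[0,1] \otimes F_n) \cong \bigoplus_i \Lsc([0,1], \overline{\NN})$ is correctly invoked (via Morita invariance of $\Cu$ and the one-dimensional case of Robert's theorem), but this is standard and already implicitly used elsewhere in the paper.
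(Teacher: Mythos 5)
Your argument is correct and is precisely the one the paper intends: the preceding results (\autoref{connected_wch} for the chainable continuum $[0,1]$, \autoref{lma:WeakChSums} for finite direct sums, and \autoref{prp:Wchindlim} for inductive limits, combined with continuity of the functor $\Cu$) are set up exactly so that the corollary follows by this assembly. No gaps.
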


\begin{exa}\label{exa:LscTnotwch}
 The \CuSgp{s} $\Lsc (\mathbb{T},\NNbar )$ and $\Lsc ([0,1]^{2},\NNbar )$ do not satisfy the weak chainability condition.
 
 Indeed, this follows clearly from \autoref{connected_wch}, as $\mathbb{T}$ and $[0,1]^{2}$ are not chainable continua.
\end{exa}

Using the results developed thus far, one can now use an analogous proof to that of \autoref{char_piecewise} to prove the following theorem.

\begin{thm}\label{almost_chainable_Cu}
 Let $S$ be a \CuSgp{}. Then, $S$ is $\Cu$-isomorphic to the Cuntz semigroup of a unital commutative AI-algebra if and only if $S$ is countably based, $\Lsc$-like,  weakly chainable, has a compact order unit, and satisfies \axiomO{5}.
\end{thm}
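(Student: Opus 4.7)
The plan is to follow the same pattern as the proof of \autoref{char_piecewise}, assembling the results of Sections 3--5 with the lemma that relates weak chainability of $\Lsc(X,\NNbar)$ to almost chainability of $X$.

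\textbf{Forward direction.} Suppose $S\cong \Cu(A)$ with $A$ a unital commutative AI-algebra. By \autoref{pgr:Thom}, $A\cong C(X)$ with $X$ an inverse limit of finite disjoint unions of unit intervals, so $X$ is a compact metric space with $\dim(X)\leq 1$ and, by \autoref{achain_iff_comAI}, $X$ is almost chainable. Robert's theorem \cite[Theorem~1.1]{Rob13CuSpDim2} then identifies $S\cong \Lsc(X,\NNbar)$. Now I would verify the five properties: $\Lsc$-likeness comes from \autoref{exa_Lsc_is_Lsc}; the fact that $S$ is countably based is \cite[Lemma~1.3]{AntPerSan11PullbacksCu} (or follows since $X$ is second countable via \autoref{prp:TopProp}(i)); the compact unit $[1_{C(X)}]$ is compact because $X$ is compact (via \autoref{prp:TopProp}(ii)); \axiomO{5} holds for every Cuntz semigroup of a \ca{}; and weak chainability follows from the lemma preceding \autoref{connected_wch} applied to the almost chainable $X$.

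\textbf{Backward direction.} Suppose $S$ satisfies all five properties. By \autoref{LscXS_iff_Lsc_like} we have $S\cong \Lsc(X_S,\NNbar)$ for the topological space $X_S$ of \autoref{pgr:TopSpaCuSgp}. Since $S$ is countably based, has a compact order unit and satisfies \axiomO{5}, \autoref{prp:TopProp}(v) gives that $X_S$ is a compact metric space. Now weak chainability of $S\cong \Lsc(X_S,\NNbar)$ together with the lemma preceding \autoref{connected_wch} forces $X_S$ to be almost chainable. \autoref{achain_iff_comAI} then yields that $C(X_S)$ is a unital commutative AI-algebra. Finally, since almost chainable spaces have $\dim\leq 1$ (\autoref{rmk:ChainImplDim1}), Robert's theorem again gives $\Cu(C(X_S))\cong \Lsc(X_S,\NNbar)\cong S$, as required.

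\textbf{Main obstacle.} There is no genuinely new argument required: all the heavy lifting has already been carried out, namely (a) the abstract characterization of $\Lsc(X,\NNbar)$ among \CuSgp{s} (\autoref{LscXS_iff_Lsc_like}), (b) the topological translation of the \CuSgp{} properties (\autoref{prp:TopProp}), (c) the equivalence of weak chainability of $\Lsc(X,\NNbar)$ and almost chainability of $X$, and (d) the characterization of almost chainable compacta as those whose $C$-algebra is an AI-algebra (\autoref{achain_iff_comAI}). The only point to be careful about is to match hypotheses precisely: one must invoke \axiomO{5} both in order to apply \autoref{prp:TopProp}(v) (which needs \axiomO{5} to conclude metrizability via \autoref{Hausdorff}) and in order to use Robert's theorem in the final identification $S\cong\Cu(C(X_S))$. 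Once these citations are lined up correctly the proof mirrors \autoref{char_piecewise} almost verbatim, with ``piecewise chainable'' replaced by ``weakly chainable'' and ``block-stable'' replaced by ``unital commutative''.
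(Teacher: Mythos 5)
Your proposal is correct and follows exactly the route the paper intends: the paper's own proof is simply the instruction to argue as in \autoref{char_piecewise}, replacing \autoref{P_W_Chainable} by the lemma equating weak chainability of $\Lsc(X,\NNbar)$ with almost chainability of $X$, and \autoref{dfn:PieceWiseChain} by \autoref{achain_iff_comAI}, which is precisely what you have written out. Your remark about establishing that $X_S$ is compact metric via \autoref{prp:TopProp}(v) before invoking that lemma is the right point to be careful about, and you handle it correctly.
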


\section{New properties of the Cuntz semigroup of an AI-algebra}\label{Sect_Proper}

Inspired by the abstract characterization obtained above, in this section we introduce properties that are satisfied by the Cuntz semigroups of all AI-algebras and that are not satisfied by other well known $\Cu$-semigroups. In \autoref{wch} we have already introduced one such property, which is not satisfied by $\Lsc (\mathbb{T},\NNbar )$; see \autoref{exa:LscTnotwch}. We now introduce the conditions of \CuSgp{s} with refinable sums and almost ordered sums, and show that the Cuntz semigroup of all AI-algebras satisfy these properties. We also prove that $Z$, the Cuntz semigroup of the Jiang-Su algebra $\mathcal{Z}$, does not have refinable sums; see \autoref{exa:ZnotRefSum}.

\begin{dfn}\label{dfn_Rs}
 We say that a \CuSgp{} $S$ has \emph{refinable sums} if, given a finite $\ll$-increasing sequence
 \[
  x_{1}\ll \cdots\ll x_{n}
 \]
 and elements $x'_{1},\cdots ,x'_{n}$ such that $x_{i}\propto x'_{i}$ for every $i$, there exist finite decreasing sequences $(y_{j}^{i})_{j=1}^{l}$ such that:
\begin{enumerate}[(i)]
 \item $x'_{i+1}\geq y_{1}^{i}$ for every $i$.
 \item $y_{j}^{i}\ll y_{j}^{i+1}$ for every $i$ and $j$.
 \item $x_{i}\ll y_{1}^{i}+\cdots +y_{l}^{i} \ll x_{i+1}$.
\end{enumerate}
\end{dfn}

\begin{exa}\label{exa:LscRefSum}
 Any \CuSgp{} $S$ of the form $\Lsc (X,\NNbar )$ has refinable sums.
 
 To see this, let $x_{i},x'_{i}$ as in \autoref{dfn_Rs}, and let $\tilde{x_{i}}$ be such that
 \[
  x_{1}\ll \tilde{x}_{1} \ll x_{2}\ll \tilde{x}_{2} \ll x_{3}\ll \cdots\ll x_{n}.
 \]

 Since $\tilde{x}_{i}\in S_{\ll}$ for each $i$, they can all be written as ordered finite sum of elements below one. Further, by possibly adding some zeros, we may assume that all $\tilde{x}_{i}$'s have the same amount of summands. That is to say, we have
 \[
  x_{1}\ll \tilde{x}_{1}=y_{1}^{1}+\cdots +y_{l}^{1} \ll x_{2}\ll \tilde{x}_{2}=y_{1}^{2}+\cdots +y_{l}^{2} \ll x_{3}\ll\cdots\ll x_{n}.
 \]
 
Thus, we know by \autoref{wayb_C5} that $y_{j}^{i}\ll y_{j}^{i+1}$ for every $i,j$. Moreover, since we have $\tilde{x}_{i}\ll x_{i+1}\propto x_{i+1}'$, one can find $x''_{i+1}\ll x'_{i+1}$ such that $\tilde{x}_{i}\propto x''_{i+1}$.

Since $x''_{i+1}\in S_{\ll}$, we have that $\tilde{x}_{i}\propto x''_{i+1}\propto x''_{i+1}\wedge 1$. Applying the topological order in $S$, we obtain $y_{1}^{i}\leq x''_{i+1}\wedge 1\leq x'_{i+1}$, as required.
\end{exa}

\begin{exa}\label{exa:ZnotRefSum}
 Let $Z=(0,\infty ]\sqcup \NN$, and denote by $n'\in (0,\infty ]$ the associated element to $n\in\NN$. Order and addition in $Z$ are defined normally in each component, and given $x\in (0,\infty ]$ and $n\in\NN$ we set $x\leq n$ if and only if $x\leq n'$; $n\leq x$ if and only if $n'<x$; and $x+n=x+n'$. It was proved in \cite[Theorem~3.1]{PerTom07Recasting} that $Z$ is $\Cu$-isomorphic to the Cuntz semigroup of the Jiang-Su algebra $\mathcal{Z}$, as defined in \cite{JiaSu99}.
 
 We claim that $Z$ does not have refinable sums. Indeed, set $x_{1}=x'_{1}=x_{2}=x'_{2}=1$, $x_{3}=1.1$ and $x'_{3}=0.5$. Then, if $Z$ were to have refinable sums, we would get
 \[
  1\ll y_{1}^{1}+\cdots +y_{l}^{1}\ll 1\ll y_{1}^{2}+\cdots +y_{l}^{2}\ll 1.1 .
 \]

 It follows, in particular, that $y_{1}^{1}=1$ and $y_{i}^{1}=0$ whenever $i\geq 2$. 
 
 This, together with the properties of our sums,  implies that
 \[
  1=y_{1}^{1}\ll y_{1}^{2}\leq 0.5,
 \]
which is a clear contradiction.
\end{exa}

\begin{prp}\label{prp:RefSum}
 Let $S$ be a \CuSgp{} that can be written as an inductive limit $\lim S_{k}$ of \CuSgp{s} $S_{k}$ that have refinable sums. Then, $S$ also has refinable sums.
\end{prp}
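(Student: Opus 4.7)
The plan is to lift all the relevant data at once to a single stage $S_m$ of the inductive system, apply refinable sums there, and then push the resulting decomposition back down to $S$ via the canonical map $\iota_m\colon S_m\to S$. I begin by interpolating in $S$: for each $i = 1, \ldots, n-1$, use \axiomO{2} to pick $p_i, q_i \in S$ with $x_i \ll p_i \ll q_i \ll x_{i+1}$. Moreover, for each $i$, since $x_i \propto x'_i$ (i.e., $x_i \leq N_i x'_i$ for some $N_i$) and $x'_i$ is the supremum of a $\ll$-increasing sequence, one can choose $\tilde s_i \ll x'_i$ with $x_i \leq N_i \tilde s_i$, so in particular $x_i \propto \tilde s_i$.

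Next comes the crucial lifting step. Using the standard description of inductive limits in $\Cu$ from \cite{AntoPereThie18} --- specifically, that for $u \ll v$ in $S = \lim_k S_k$ there is a stage $k$ and $\hat u \in S_k$ with $u \leq \iota_k(\hat u) \leq v$, and that any $\ll$-relation between images in $S$ can be realised between representatives at a sufficiently large stage --- I would find a common $m$ and elements $\hat p_i, \hat q_i, \hat s_i \in S_m$ satisfying
\[
x_i \leq \iota_m(\hat p_i) \leq q_i, \quad p_i \leq \iota_m(\hat q_i) \leq x_{i+1}, \quad \iota_m(\hat s_i) \leq x'_i,
\]
while in $S_m$ itself one has the $\ll$-chain
\[
\hat p_1 \ll \hat q_1 \ll \hat p_2 \ll \hat q_2 \ll \cdots \ll \hat p_{n-1} \ll \hat q_{n-1}
\]
together with $\hat q_k \propto \hat s_{k+1}$ in $S_m$ for each $k = 1, \ldots, n-1$. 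The last item is obtained by choosing $\tilde s_{k+1}$ close enough to $x'_{k+1}$ to absorb $\iota_m(\hat q_k) \leq x_{k+1} \leq N_{k+1} x'_{k+1}$ in the limit, and then realising the resulting proportionality at the level of $S_m$.

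With this data in hand, I would apply the refinable sums property of $S_m$ to the above $\ll$-chain of length $2(n-1)$, taking the trivial partners $\hat p_k \propto \hat p_k$ at the odd positions and $\hat q_k \propto \hat s_{k+1}$ at the even positions. This produces decreasing sequences $(\hat y^{(r)}_j)_{j=1}^l$, $r = 1, \ldots, 2n-3$, of common length $l$, satisfying the three conditions of \autoref{dfn_Rs} inside $S_m$. I would then discard the even-indexed sequences and set $y^k_j := \iota_m(\hat y^{(2k-1)}_j)$ for $k = 1, \ldots, n-1$ and $j = 1, \ldots, l$. Each sequence $(y^k_j)_j$ is decreasing in $j$ since $\iota_m$ is order-preserving; condition (i) of \autoref{dfn_Rs} follows from $\hat y^{(2k-1)}_1 \leq \hat s_{k+1}$ together with $\iota_m(\hat s_{k+1}) \leq x'_{k+1}$; condition (ii) follows from the chain $\hat y^{(2k-1)}_j \ll \hat y^{(2k)}_j \ll \hat y^{(2k+1)}_j$ in $S_m$ and the fact that $\iota_m$ preserves $\ll$; condition (iii) follows by sandwiching $\hat p_k \ll \sum_j \hat y^{(2k-1)}_j \ll \hat q_k$ between $x_k$ and $x_{k+1}$ under $\iota_m$, using $x_k \leq \iota_m(\hat p_k)$ and $\iota_m(\hat q_k) \leq x_{k+1}$.

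The main obstacle is the bookkeeping in the lifting step: arranging all the $\ll$ and $\propto$ relations above to hold simultaneously in a single stage $S_m$ requires the full inductive-limit machinery for $\Cu$-semigroups from \cite{AntoPereThie18}. Once this is set up, the remainder of the argument is a direct translation between $S_m$ and $S$, very much in the spirit of \autoref{prp:Wchindlim}.
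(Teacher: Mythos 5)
Your proposal is correct and follows essentially the same route as the paper's own proof: interpolate to double the $\ll$-chain, lift all the data (including the proportionality witnesses for the $x_i'$) to a single stage of the inductive system, apply refinable sums there, keep only the odd-indexed output sequences, and push everything back down through the canonical $\Cu$-morphism. The bookkeeping you flag in the lifting step is handled at the same level of detail in the paper, which likewise simply asserts the existence of a suitable stage.
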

\begin{proof}
Let $S=\lim S_{k}$ where each $S_{k}$ has refinable sums. As in the proof of \autoref{prp:Wchindlim}, let us denote the image through the canonical map $S_k\to S$ of an element $x\in S_k$ by $[x]$.

 Let $x_{1},\cdots , x_{n}$ and $x'_{1},\cdots , x'_{n}$ be elements in $S$ as in \autoref{dfn_Rs}. Let $k\in\NN$ such that, for every $i\leq n-1$, there exist elements $u_{2i-1},u_{2i},v_{2i-1},v_{2i}\in S_k$ satisfying
 \[
  x_{i}\ll [u_{2i-1}] \ll [u_{2i}] \ll x_{i+1},\quad
  [v_{2i}]\leq x_{i+1}',
 \]
and 
\[
 u_{1}\ll\cdots\ll u_{2n-2},\andSep u_{i}\propto v_{i}.
\]

Since $S_{k}$ has refinable sums, we obtain decreasing sequences $(y_{j}^{i})_{j=1}^{l}$ for $i\leq 2n-2$ satisfying the properties of \autoref{dfn_Rs}. In particular, we get
\[
 x_{i}\ll [u_{2i-1}]\ll [y_{1}^{2i-1}]+\cdots +[y_{1}^{2i-1}]\ll [u_{2i}] \ll x_{i+1},
\]
and $[y_{1}^{2i-1}]\propto v_{i+1}\leq x'_{i+1}$.

It follows that $S$ has refinable sums.
\end{proof}

\begin{dfn}\label{dfn_almost_ordered}
 A \CuSgp{} $S$ is said to have \emph{almost ordered sums} if for any finite set of elements $x_{1},\cdots ,x_{n}$ in $S$ there exists elements $y_{j,i}$ in $S$ such that
 \[
  x_{1}+\cdots +x_{n}=\sup_{i} (y_{1,i}+\cdots +y_{n,i})
 \]
and such that
\begin{enumerate}[(i)]
 \item $y_{1,i}\geq\cdots \geq y_{n,i}$
 \item $(y_{n,i})_{i}$ is increasing and bounded by $x_{1},\cdots ,x_{n}$.
 \item If $x'\ll x_{j_{1}},\cdots ,x_{j_{r}}\leq z$ for $j_{1},\cdots ,j_{r}$ pairwise different, we have $x'\leq y_{r,i}$ and $y_{n+1-r,i}\leq z$ for every sufficiently large $i$.
\end{enumerate}
\end{dfn}

\begin{exa}\label{exa_ord}
 If $S$ is a distributively lattice ordered \CuSgp{}, $S$ has almost ordered sums. This applies, in particular, to \CuSgp{s} such that $S\cong\Lsc (X,\NNbar )$ for some $X$.
  
Indeed, given $x_{1},\cdots ,x_{n}$, set
\[
  y_{1,i} = x_{1}\vee\cdots\vee x_{n},\quad 
  y_{2,i} = (x_{1}\wedge x_{2})\vee \cdots\vee (x_{n-1}\wedge x_{n}),\quad 
  \cdots ,\andSep 
  y_{n,i} =x_{1}\wedge\cdots\wedge x_{n}
\]
for every $i$, and note that by \autoref{std_formula} we have
\[
 x_{1}+\cdots +x_{n}=y_{1,i}+\cdots +y_{n,i}.
\]

This implies that $S$ has almost ordered sums.
\end{exa}

\begin{exa}
 Let $Z'=Z\cup\{ 1''\}$ with $1''$ a compact element not comparable with $1$ such that   $1+x=1''+x$ for every $x\in Z\setminus\{ 0\}$ and $k1''=k$ for every $k\in\NN$. Then, $Z'$ does not have almost ordered sums.
 
 To see this, consider the sum $1+1''$ and assume, for the sake of contradiction, that $Z'$ has refinable sums. Then, there exist elements $y_{1,i},y_{2,i}$ such that $1+1''=\sup_{i}y_{1,i}+y_{2,i}$.
 
 Since $1+1''=2$ is compact, for every sufficiently large $i$ we have $1+1''= y_{1,i}+y_{2,i}$.
 
 This implies that $y_{1,i}=2$ and $y_{2,i}=0$, since we know that $1,1''\leq y_{1,i}$ and that $1$, $1''$ are not comparable.
 
 However, we also have $1,1''\leq 1.5$, so we get $2=y_{1,i}\leq 1.4$, a contradiction.
\end{exa}

\begin{prp}\label{prp:AlmOrdSum}
Inductive limits of distributively lattice ordered \CuSgp{s} have almost ordered sums.
\end{prp}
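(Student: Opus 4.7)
The plan is to lift the problem to each stage of the inductive system and invoke \autoref{exa_ord} there, since each $S_k$ is distributively lattice ordered. Write $S=\varinjlim_k S_k$ and fix $x_1,\ldots,x_n\in S$. Using \axiomO{2} together with the standard description of the way-below relation in $\Cu$-inductive limits, I would choose an increasing sequence of indices $(k_i)_i$ and, for each $i$, lifts $x_1^{(i)},\ldots,x_n^{(i)}\in S_{k_i}$ whose images $\varphi_{k_i,\infty}(x_j^{(i)})\in S$ form a $\ll$-increasing sequence in $i$ with supremum $x_j$, with $\varphi_{k_i,k_{i+1}}(x_j^{(i)})\ll x_j^{(i+1)}$ in $S_{k_{i+1}}$. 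In each $S_{k_i}$ I would then set
\[
\tilde{y}_{r,i}:=\bigvee_{|J|=r,\, J\subseteq\{1,\ldots,n\}}\bigwedge_{j\in J}x_j^{(i)},\qquad r=1,\ldots,n,
\]
so that by \autoref{exa_ord} and \autoref{std_formula}, these form the almost ordered sum decomposition of $x_1^{(i)}+\cdots+x_n^{(i)}$ in $S_{k_i}$. Finally I would define $y_{r,i}:=\varphi_{k_i,\infty}(\tilde{y}_{r,i})\in S$.

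The routine conditions can then be verified directly. The ordering $y_{1,i}\geq\cdots\geq y_{n,i}$, giving (i), is immediate from monotonicity of $\varphi_{k_i,\infty}$. The sum formula $x_1+\cdots+x_n=\sup_i\sum_r y_{r,i}$ follows from \axiomO{4}, the preservation of sums by $\varphi_{k_i,\infty}$, and the per-stage identity $\sum_j x_j^{(i)}=\sum_r\tilde{y}_{r,i}$. For (ii), the bound $y_{n,i}\leq x_j$ follows from $\tilde{y}_{n,i}=\bigwedge_j x_j^{(i)}\leq x_j^{(i)}$; and $(y_{n,i})_i$ is increasing because, in $S_{k_{i+1}}$, the inequalities $\varphi_{k_i,k_{i+1}}(x_j^{(i)})\ll x_j^{(i+1)}$ combine via monotonicity of the meet to give $\varphi_{k_i,k_{i+1}}(\bigwedge_j x_j^{(i)})\leq\bigwedge_j x_j^{(i+1)}$.

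The hardest step, and the main obstacle, is condition (iii): it demands that $\ll$-inequalities holding in $S$ be realized as ordinary inequalities at a single stage $S_{k_N}$, at which the distributive lattice operations can be safely used. For the first assertion, given $x'\ll x_{j_s}$ for $s=1,\ldots,r$, I would apply interpolation of $\ll$ and the characterization of the way-below relation in $\Cu$-inductive limits (as developed in \cite[Section~3]{AntoPereThie18}) to produce, for all sufficiently large $N$, an element $a\in S_{k_N}$ with $\varphi_{k_N,\infty}(a)\geq x'$ and $a\leq x_{j_s}^{(N)}$ in $S_{k_N}$ for every $s$ simultaneously; distributivity then yields $a\leq\bigwedge_s x_{j_s}^{(N)}\leq\tilde{y}_{r,N}$, and applying $\varphi_{k_N,\infty}$ gives $x'\leq y_{r,N}$. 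For the second assertion, given $x_{j_s}\leq z$ for each $s$, I would fix a $\ll$-increasing lift $(z^{(M)})_M$ of $z$ and use $\ll$-transfer (since $\varphi_{k_i,\infty}(x_{j_s}^{(i)})\ll x_{j_s}\leq z$ yields $\varphi_{k_i,\infty}(x_{j_s}^{(i)})\ll z$) to find a common large stage $N$ at which each pushforward of $x_{j_s}^{(i)}$ is dominated in $S_{k_N}$ by a pushforward of some $z^{(M)}$; the pigeonhole principle (each $J$ with $|J|=n+1-r$ meets $\{j_1,\ldots,j_r\}$) then forces each summand $\bigwedge_{j\in J}$ entering $\tilde{y}_{n+1-r,i}$ to be dominated, after transfer, by the lift of $z$; taking the $\bigvee$ in the lattice $S_{k_N}$ preserves this bound, and applying $\varphi_{k_N,\infty}$ gives $y_{n+1-r,i}\leq\varphi(z^{(M)})\leq z$. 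The essential technical work is coordinating these $\ll$-transfer arguments so that all of the required inequalities can be arranged to hold simultaneously at one stage, which one achieves by successively enlarging $N$.
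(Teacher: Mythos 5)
Your proposal is correct and follows essentially the same route as the paper's proof: lift the $x_j$ to $\ll$-increasing sequences at stages $S_{k_i}$, apply the distributive-lattice decomposition of \autoref{exa_ord} (equivalently your explicit $\bigvee_{|J|=r}\bigwedge_{j\in J}$ formula from \autoref{std_formula}) within each building block, push forward, and verify (i)--(iii) by transferring the relevant $\ll$-inequalities to a single sufficiently large stage. Your pigeonhole argument for the second half of (iii) makes explicit a step the paper delegates to condition (iii) holding in each $S_{k_l}$, but the underlying argument is the same.
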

\begin{proof}
 Let $S=\lim_{k} (S_{k},\varphi_{k+1,k})$ be the inductive limit of distributively lattice ordered \CuSgp{s} $S_{k}$. As before, given an element $x\in S_k$ let us denote its image through the canonical map $S_k\to S$ by $[x]$.
 
 Let $x_{1},\cdots ,x_{n}$ be elements in $S$. One can check that there exists an increasing sequence of integers $(k_{l})$ and elements take $x_{1}^{l},\cdots ,x_{n}^{l}\in S_{k_{l}}$ such that $([x_{j}^{l}])_{l}$ are $\ll$-increasing sequences in $S$ with suprema $x_{j}$ for every $j\leq n$, in such a way that $\varphi_{k_{l+1},k_{l}}(x_{j}^{l})\ll x_{j}^{l+1}$ for every $j$ and $l$.
 
 Since each $S_{k}$ is distributively lattice ordered, for every $l$ there exist elements $y_{1}^{l},\cdots ,y_{n}^{l}$ in $S_{k_{l}}$ with 
 \[
 x_{1}^{l}+\cdots +x_{n}^{l}=y_{1}^{l}+\cdots +y_{n}^{l}
 \]
 satisfying the properties of \autoref{dfn_almost_ordered} (see \autoref{exa_ord} above). This implies, in particular, $\sup_{l}([y_{1}^{l}]+\cdots + [y_{n}^{l}])=x_{1}+\cdots +x_{n}$.

We will now check that the elements $[y_{1}^{l}],\cdots , [y{n}^{l}]$ satisfy conditions (i)-(iii) in \autoref{dfn_almost_ordered}:

By construction, one has $y_{1}^{l}\geq\cdots\geq y_{n}^{l}$ for every $l$, so condition (i) is satisfied. For condition (ii), let $l\in\NN$.  Then, applying condition (ii) in $S_{k_{l}}$, we have
\[
 \varphi_{k_{l+1},k_{l}} (y_{n}^{l})
 \leq 
 \varphi_{k_{l+1},k_{l}}(x_{1}^{l}), \cdots , \varphi_{k_{l+1},k_{l}}(x_{n}^{l})\ll x_{1}^{l+1},\cdots, x_{n}^{l+1}
\]
and, by condition (iii) in $S_{k_{l+1}}$, we get $\varphi_{k_{l+1},k_{l}} (y_{n}^{l})\leq y_{n}^{l+1}$. It follows that condition (ii) is satisfied.

To prove (iii), take $x',z\in S$ such that $x'\ll x_{j_{1}}, \cdots ,x_{j_{r}}\leq z$ for some pairwise different $j_{1},\cdots ,j_{r}\leq n$. For a large enough $l$, there exist $u$ such that
\[
 u\ll x_{j_{1}}^{l},\cdots ,x_{j_{r}}^{l}
\]
and $x'\ll [u]$ in $S$.

This implies that, for every $l'\geq l$, one has
\[
 \varphi_{k_{l'},k_{l}}(u)\ll \varphi_{k_{l'},k_{l}}(x_{j_{1}}^{l})+\cdots +
 \varphi_{k_{l'},k_{l}}(x_{j_{r}}^{l})
\]
in $S_{k_{l'}}$.

Consequently, we have $\varphi_{k_{l'},k_{l}}(u)\leq d_{r}^{l'}$ and so $x\ll [u]\ll [d_{r}^{l'}]$ in $S$ for every $l'\geq l$. Also, since $x_{j_{1}}, \cdots ,x_{j_{r}}\leq b$, for every $l$ there exists some $z_{l}\in S_{k_{l}}$ with $[z_{l}]\ll z$ and 
\[
 x_{j_{1}}^{l},\cdots ,x_{j_{r}}^{l}\leq z_{l}.
\]

Therefore, one gets $y_{n+1-r}^{l}\leq z_{l}$. This implies $[y_{n+1-r}^{l}]\leq [z_{l}]\ll z$ for every $l$, as required.
\end{proof}

\begin{thm}\label{thm:AIProperties}
 Let $A$ be an AI-algebra. Then, its Cuntz semigroup $\Cu (A)$ is weakly chainable and has refinable sums and almost ordered sums.
\end{thm}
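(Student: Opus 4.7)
The plan is to reduce the statement to the building blocks of the AI-algebra and invoke the permanence properties established earlier in the paper. By \autoref{pgr:Thom}, an AI-algebra $A$ can be written as an inductive limit $A=\lim_n C[0,1]\otimes F_n$ with $F_n$ finite dimensional. Since the functor $\Cu$ is arbitrarily continuous, one has $\Cu(A)\cong\lim_n \Cu(C[0,1]\otimes F_n)$. Writing $F_n=\bigoplus_{i=1}^{r_n} M_{k_{n,i}}$ and using stability of the Cuntz semigroup, each building block decomposes as $\Cu(C[0,1]\otimes F_n)\cong\bigoplus_{i=1}^{r_n}\Lsc([0,1],\NNbar)$, so it suffices to establish the three properties for such finite direct sums and then propagate them through the inductive limit.

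For the base case, recall that $\Lsc([0,1],\NNbar)$ is $\Lsc$-like (\autoref{exa_Lsc_is_Lsc}), distributively lattice ordered, and corresponds under $X_S\cong X$ to the chainable continuum $[0,1]$. Thus weak chainability follows from \autoref{connected_wch}, refinable sums from \autoref{exa:LscRefSum}, and almost ordered sums from \autoref{exa_ord}. The next step is to check that each of the three conditions is stable under finite direct sums: weak chainability is precisely \autoref{lma:WeakChSums}; the distributively lattice ordered structure is coordinate-wise, so almost ordered sums on a direct sum of distributively lattice ordered semigroups follow again from \autoref{exa_ord}; and refinable sums pass to direct sums by a routine component-wise argument (given sequences in each component, apply the refinable-sums property separately in each summand, then pad with zeros so that the decompositions have a common length, and reassemble the coordinate data into sequences $(y_j^i)$ in the direct sum).

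Finally, the passage to the inductive limit uses the lemmas proved in \autoref{Sect_Proper}: weak chainability is inherited from \autoref{prp:Wchindlim}, refinable sums from \autoref{prp:RefSum}, and almost ordered sums from \autoref{prp:AlmOrdSum}, since each $\Cu(C[0,1]\otimes F_n)$ is a finite direct sum of distributively lattice ordered $\Cu$-semigroups and is therefore itself distributively lattice ordered. Combining these, $\Cu(A)$ satisfies all three properties.

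The argument is largely mechanical given the machinery already in place, and the main obstacle is essentially bookkeeping: verifying carefully that refinable sums survive finite direct sums (since no explicit lemma for this appears in the excerpt), and ensuring that the decomposition $\Cu(C[0,1]\otimes F_n)\cong\bigoplus_i\Lsc([0,1],\NNbar)$ is compatible with the connecting maps so that the inductive-limit lemmas apply to the system $(\Cu(C[0,1]\otimes F_n),\Cu(\varphi_{n+1,n}))$ rather than to the reindexed direct-sum system.
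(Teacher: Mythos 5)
Your proposal is correct and follows essentially the same route as the paper: reduce to the building blocks $\Lsc([0,1],\NNbar)$, use \autoref{exa:LscRefSum}, \autoref{exa_ord} and \autoref{connected_wch} for the base case, pass to finite direct sums (the paper likewise observes that refinable and almost ordered sums survive direct sums ``using the same arguments as in \autoref{lma:WeakChSums}''), and then apply the inductive-limit permanence results \autoref{prp:Wchindlim}, \autoref{prp:RefSum} and \autoref{prp:AlmOrdSum}. The only cosmetic difference is that the paper cites \autoref{cor:WeakCha} directly for weak chainability rather than re-deriving it.
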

\begin{proof}
The Cuntz semigroup 
 $\Cu (A)$ is weakly chainable by \autoref{cor:WeakCha}. Further, using the same arguments as in \autoref{lma:WeakChSums}, it is easy to see that finite direct sums of \CuSgp{s} having refinable sums or almost ordered sums have refinable sums or almost ordered sums respectively. Thus, it  follows from \autoref{exa:LscRefSum} and \autoref{prp:RefSum} that $S$ has refinable sums.
 
 By \autoref{exa_ord} and \autoref{prp:AlmOrdSum}, $\Cu (A)$ also has almost ordered sums.
\end{proof}


\bibliographystyle{aomalphaMyShort}
\bibliography{ReferencesCommCuAI}

\end{document}